\newtheorem{thm}{Theorem}[section]
\newtheorem{lem}[thm]{Lemma}
\newtheorem{prop}[thm]{Proposition}
\newtheorem{cor}[thm]{Corollary}
\newtheorem{lem-def}[thm]{Lemma-Definition}
\theoremstyle{definition}
\newtheorem{dfn}[thm]{Definition}
\theoremstyle{remark}
\newtheorem{remark}[thm]{Remark}
\newcommand{\CA}{{\mathcal{A}}}
\newcommand{\CF}{{\mathcal{F}}}
\newcommand{\CG}{{\mathcal{G}}}
\newcommand{\CI}{{\mathcal{I}}}
\newcommand{\CL}{{\mathcal{L}}}
\newcommand{\CZ}{{\mathcal{Z}}}
\newcommand{\CB}{{\mathcal{B}}}
\newcommand{\af}{\alpha}
\newcommand{\bt}{\beta}
\newcommand{\gm}{\gamma}
\newcommand{\dt}{\delta}
\newcommand{\sm}{\sigma}
\newcommand{\Z}{{\mathbb{Z}}}
\newcommand{\C}{{\mathbb{C}}}
\newcommand{\N}{{\mathbb{N}}}
\newcommand{\T}{{\mathbb{T}}}
\newcommand{\PDS}{\text{\textsc{PDynSys}$_\mathrm{Orb}$}}
\newcommand{\TG}{\text{\textsc{TopGrpd}}}
\newcommand{\Hom}{\operatorname{Hom}}
\newcommand{\scj}{\subseteq}
\begin{document}


\title[ Continuous Orbit Equivalence for Partial Dynamical Systems ]
{A Categorical Interpretation of Continuous Orbit Equivalence for Partial Dynamical Systems}

\author[Gilles G. de Castro]{Gilles G. de Castro}
\address{Departamento de Matem\'atica, Universidade Federal de Santa Catarina, 88040-970 Florian\'opolis SC, Brazil.} \email{gilles.castro@ufsc.br}

\author[E. J. Kang]{Eun Ji Kang}
\address{Research Institute of Mathematics, Seoul National University, Seoul 08826, 
Korea} \email{kkang3333\-@\-gmail.\-com}

\thanks{This research was supported by Basic Science Research Program through the 
National Research Foundation of Korea(NRF) funded by the Ministry of Education(RS-2023-00238961) and the Ministry of Science and ICT(NRF-2022M3H3A1098237).}

\subjclass[2020]{Primary: 46L55, Secondary:  46L05, 37B99, 22A22}

\keywords{Partial dynamical systems, continuous orbit equivalences, toplogical groupoids, crossed products,  eventual conjugacy, generalized Boolean dynamical systems.  }

\begin{abstract}

We define the orbit morphism of partial dynamical systems and prove that an orbit morphism being an isomorphism in the category of partial dynamical systems and orbit morphisms is equivalent to the existence of a continuous orbit equivalence between the given partial dynamical systems that preserves the essential stabilisers. We show that this is equivalent to the existence of a diagonal-preserving isomorphism between the corresponding crossed products when the essential stabilisers of partial actions are torsion-free and abelian. We also characterize when an étale groupoid is isomorphic to the transformation groupoid of some partial action. Additionally, we explore the implications in the context of semi-saturated orthogonal partial dynamical systems over free groups, establishing connections with Deaconu-Renault systems and the concept of eventual conjugacy. Finally, we apply our results to C*-algebras associated with generalized Boolean dynamical systems.
 \end{abstract}

\maketitle

\section{Introduction}

The interplay between topological dynamical systems and $C^*$-algebras plays a crucial role in understanding the underlying dynamical and algebraic structures, providing key insights into both fields of study.
A notable result demonstrating this connection is the equivalence between orbit equivalence of minimal homeomorphisms on  Cantor sets and diagonal-preserving isomorphisms of their corresponding crossed products (\cite{GPS}). 
This result has been extended to topologically free homeomorphisms on compact Hausdorff spaces (\cite{T1996, BT1998}), among many others, and further generalized to dynamical systems arising from topologically free group actions on locally compact Hausdorff spaces (\cite[Theorem 1.2]{Li2018}).
In recent years, building on these developments, the concept of stabiliser-preserving continuous orbit equivalence was introduced for group actions on such spaces, and it was subsequently shown that if the essential stabilisers of group actions are torsion-free and abelian, the actions are classified, up to stabiliser-preserving continuous orbit equivalence, by diagonal-preserving isomorphisms of their associated crossed products (\cite[Corollary 7.5]{CRST}).
 
The notion of a partial dynamical system has emerged as an important framework in the study of dynamical systems, particularly because many $C^*$-algebras can be naturally constructed as crossed products associated with partial dynamical systems, while the framework of ordinary dynamical systems tends to be more limited (see, for instance, \cite{Dok1,ExelBook}). For partial dynamical systems, the notion of continuous orbit equivalence was introduced in \cite{Li2017}, where the relationship with diagonal-preserving isomorphisms is established for topological free actions \cite[Theorem 2.7]{Li2017}, that is to say, that essential stabilisers groups are all trivial. One of the main goals of this paper is to extend both \cite[Corollary 7.5]{CRST} and \cite[Theorem 2.7]{Li2017}, which is achieved in Theorem \ref{isom-equivalences} and Proposition \ref{prop:isom-equivalences}. A crucial step in both results is to understand when the transformation groupoids of two actions (or partial actions) are isomorphic in terms of continuous orbit equivalence.

To tackle this problem, we first introduce the notion an orbit morphism between partial dynamical systems to consider the category where the objects are partial dynamical systems and the morphisms are orbit morphisms. We then construct a fully faithful functor from this category  to the category of topological groupoids and continuous homomorphisms (Theorem \ref{thm: functor}). As a result, we obtain that two partial dynamical systems are isomorphic in the category of partial dynamical systems and orbit morphisms if and only if the corresponding transformation groupoids are isomorphic as groupoids. We also characterize when an étale groupoid is isomorphic to some transformation groupoid of a partial dynamical system (Theorem~\ref{thm:transf.groupoid}\footnote{The first author would like to thank Alcides Buss and Ruy Exel for the discussion that led to this result.}). We then prove that the transformation groupoids being isomorphic is equivalent to the existence of a continuous orbit equivalence between the given partial dynamical systems that preserves the essential stabilisers  (Theorem \ref{isom-equivalences}). This approach not only broadens the applicability of the results but also highlights the strength of the underlying structures involved in the analysis of $C^*$-algebras associated with dynamical systems. Additionally, we utilize established results (\cite[Theorem 6.2]{CRST}) from groupoid theory to show that if the essential stabilisers of partial actions are torsion-free and abelian, then  these are equivalent to the existence of a diagonal-preserving isomorphism between the corresponding crossed products (Proposition \ref{prop:isom-equivalences}).

Many classes of C*-algebras can be described using a partial action of the free group, such as Exel-Laca algebras \cite{ExelLaca1999}, graph C*-algebras \cite{CarlsenLarsen2016}, labelled graph C*-algebras \cite{CastroWyk}, C*-algebras of generalized Boolean dynamical systems \cite{CasK2} and subshift C*-algebras \cite{BCGW2023}. We study, in general, semi-saturated orthogonal partial dynamical systems over a free group and define a Deaconu-Renault system based on them. We demonstrate that our results interpreted in this context are equivalent to the essential-stabiliser-preserving continuous orbit equivalence of the Deaconu-Renault system (Corollary \ref{isom-equivalences:oss}).
We also define the concept of eventual conjugacy between semi-saturated orthogonal partial dynamical systems over free groups and prove that this is equivalent to the corresponding Deaconu-Renault systems being eventually conjugate. Regarding crossed products corresponding to semi-saturated orthogonal partial dynamical systems over free groups, there is a natural action of the circle on the crossed product. We furthermore prove that the eventual conjugacy of these partial actions is equivalent to the existence of a diagonal-preserving isomorphism that commutes with actions between the crossed products (Corollary \ref{equivalences ec}).

We structure this paper as follows. In Section \ref{preliminary}, we present the necessary background. In Section \ref{Sec 3}, we define the notion of continuous orbit equivalence in the context of partial dynamical systems from a categorical perspective and derive key results related to it. In Section \ref{Sec 4}, we focus our attention on semi-saturated orthogonal partial dynamical systems over free groups. Finally, in Section \ref{Sec 5}, we provide an example to illustrate our findings.

\section{Preliminaries}\label{preliminary}

\subsection{Groupoids} 
Let $\CG$ be a locally compact Hausdorff groupoid with unit space $\CG^{(0)}$. We say that $\CG$ is {\it \'etale} if its range and source maps $r,s: \CG \to \CG^{(0)}$ are local homeomorphism. 
 
For a unit  $u \in \CG^{(0)}$, we define the following sets: 
$$\CG^u:=r^{-1}(u), ~\CG_u:=s^{-1}(u) ~\text{and}~\CG^u_u:=r^{-1}(u) \cap s^{-1}(u).$$
The {\it isotropy subgroupoid} of $\CG$ is $$\operatorname{Iso}(\CG):=\bigcup_{u \in \CG^{(0)}}\CG_u^u=\{\gm \in \CG: s(\gm)=r(\gm)\}.$$
For each $u \in \CG^{(0)}$, the set $\CG^u_u$ is called the {\it isotropy group} at $u$. 
  Let $\operatorname{Iso}(\CG)^{\circ}$ denote the interior of $\operatorname{Iso}(\CG)$ within $\CG$. If $\CG$ is \'etale, then we have $\CG^{(0)} \subseteq \operatorname{Iso}(\CG)^{\circ}$ and $\operatorname{Iso}(\CG)^{\circ}$ forms an  \'etale subgroupoid of $\CG$.

A groupoid is called {\it torsion-free} if for every $u \in \CG^{(0)}$, the isotropy group $\CG_u^u$ is torsion-free.

\subsection{Partial dynamical systems} 
 In what follows, all groups are assumed to be discrete and countable, and topological spaces are locally compact, Hausdorff, and second-countable.
\begin{dfn} 
A {\it partial action} of a group $G$ with identity $e_G$ on a topological space $X$ consists of a  collection $\{U_g\}_{g \in G}$ of open subsets of $X$  and a collection $\{\varphi_g\}_{g \in G}$ of homeomorphisms $$\varphi_g: U_{g^{-1}} \to U_g, \ x \mapsto g.x$$
such that 
\begin{enumerate}
\item $U_{e_G}=U_{e_{G}^{-1}}=X$ and $\varphi_{e_G}=id_X$,
\item $g_2.(U_{(g_1g_2)^{-1}} \cap U_{g_2^{-1}})=U_{g_2} \cap U_{g_1^{-1}}$ for all $g_1, g_2 \in G$, and
$(g_1g_2). x=g_1. (g_2.x)$ for all $x \in U_{g_2^{-1}} \cap U_{(g_1g_2)^{-1}}$.
\end{enumerate}
The triple $(X,G,\varphi)$ is referred to as a \textit{partial dynamical system}, and we write this  $\varphi: G \curvearrowright X$ or simply $G \curvearrowright X$. Given a partial dynamical system $G \curvearrowright X$, for each $x\in X$, we define the set $G_x:=\{g\in G:x\in U_{g^{-1}}\}.$
\end{dfn}

\begin{remark} Let $\{U_g\}_{g \in G}$ be a collection of open subsets of $X$, and let 
$\{\varphi_g\}_{g \in G}$  be a collection of 
homeomorphisms $\varphi_g: U_{g^{-1}} \to U_g, \ x \mapsto g.x$.
Then, the equality  $$g_2.(U_{(g_1g_2)^{-1}} \cap U_{g_2^{-1}})=U_{g_2} \cap U_{g_1^{-1}}$$ holds  for all $g_1, g_2 \in G$ if and only if  the equality
$$g_1.(U_{g_1^{-1}} \cap U_{g_2})=U_{g_1} \cap U_{g_1g_2}$$
holds 
 for all $g_1,g_2 \in G$.
\end{remark}

\begin{proof} ($\Rightarrow$) With the change of variables $g_1:=(g_1g_2)^{-1}$ and $g_2:=g_1$, we have 
$$g_1.( U_{g_2} \cap U_{g_1^{-1}} )  =g_1.( U_{((g_1g_2)^{-1}g_1)^{-1}} \cap U_{g_1^{-1}} )=U_{g_1} \cap U_{g_1g_2}.$$

($\Leftarrow$) With the change of variables $g_1:=g_2$ and $g_2:=(g_1g_2)^{-1}$, we have 
$$g_2.(U_{g_2^{-1}} \cap U_{(g_1g_2)^{-1}})=U_{g_2} \cap U_{g_2(g_1g_2)^{-1}}=U_{g_2} \cap U_{g_1^{-1}}.$$
\end{proof}
The transformation groupoid associated with the partial dynamical system $\varphi: G \curvearrowright X$
 is defined by
$$G \ltimes_\varphi X:=\{(g,x) \in G \times X: g\in G ~\text{and}~x \in U_{g^{-1}}\},$$
where the source map is given by $s(g,x)=x$, the range map is given by $r(g,x)=g.x$, 
the composition of elements is defined by $(g_1, g_2.x)(g_2,x)=(g_1g_2, x)$, and the inverse of an element $(g,x)$  is $(g^{-1}, g.x)$. 
We usually write $G \ltimes X$ for $G \ltimes_\varphi X$ when the action $\varphi$ is clear from the context.

\begin{lem} Let $ G \curvearrowright X$ be a partial dynamical system and let $x \in X$. Then, 
\begin{enumerate}
\item The set $\operatorname{Stab}(x):=\{ g \in G: x \in U_{g^{-1}} ~\text{and}~ g.x=x\}$ is a subgroup of $G$.
\item The set $\operatorname{Stab}^{ess}(x):=\{g \in  G: g \in \operatorname{Stab}(y) ~\text{for all}~ y ~\text{in some neighborhood}~ \allowbreak V \text{of}~x\}$ is a subgroup of $G$.
\end{enumerate}
\end{lem}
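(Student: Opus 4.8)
The plan is to check, for each of the two sets, the three subgroup axioms: that it contains $e_G$, that it is closed under multiplication, and that it is closed under inverses. Both parts rest on one technical lemma distilled from axiom (2) of a partial action, which I would establish first:

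\medskip
\noindent\emph{Claim.} If $x\in U_{h^{-1}}$ and $h.x\in U_{g^{-1}}$, then $x\in U_{(gh)^{-1}}$ and $(gh).x=g.(h.x)$.

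\medskip
\noindent To prove the inclusion, apply the identity $h.(U_{(gh)^{-1}}\cap U_{h^{-1}})=U_h\cap U_{g^{-1}}$ (this is axiom (2) with $g_1=g$, $g_2=h$). Since $x\in U_{h^{-1}}$ we have $h.x\in U_h$, and by hypothesis $h.x\in U_{g^{-1}}$, so $h.x$ lies in the right-hand side; hence $h.x=h.y$ for some $y\in U_{(gh)^{-1}}\cap U_{h^{-1}}$, and injectivity of $\varphi_h$ on $U_{h^{-1}}$ forces $y=x$, giving $x\in U_{(gh)^{-1}}$. The equality $(gh).x=g.(h.x)$ is then exactly the second half of axiom (2), valid on $U_{h^{-1}}\cap U_{(gh)^{-1}}$. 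Along the way I would also record two elementary facts used repeatedly: first, taking $\{g_1,g_2\}=\{g,g^{-1}\}$ in axiom (2) yields $\varphi_{g^{-1}}=\varphi_g^{-1}$; second, $x\in U_{g^{-1}}$ with $g.x=x$ forces $x=g.x\in U_g$, since $\varphi_g$ maps $U_{g^{-1}}$ into $U_g$.

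For part (1): $e_G\in\operatorname{Stab}(x)$ is immediate from axiom (1). If $g,h\in\operatorname{Stab}(x)$, then $x\in U_{h^{-1}}$ and $h.x=x\in U_{g^{-1}}$, so the Claim gives $x\in U_{(gh)^{-1}}$ and $(gh).x=g.(h.x)=g.x=x$, i.e.\ $gh\in\operatorname{Stab}(x)$. If $g\in\operatorname{Stab}(x)$, then $x=g.x\in U_g=U_{(g^{-1})^{-1}}$, and applying $\varphi_{g^{-1}}=\varphi_g^{-1}$ to $x=g.x$ gives $g^{-1}.x=x$, so $g^{-1}\in\operatorname{Stab}(x)$.

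For part (2), I would first note that $g\in\operatorname{Stab}^{ess}(x)$ if and only if there is an \emph{open} neighbourhood $V$ of $x$ with $V\subseteq U_{g^{-1}}$ and $g.y=y$ for all $y\in V$ (shrink any neighbourhood to an open one, and unwind the condition $g\in\operatorname{Stab}(y)$). Then $e_G$ works with $V=X$. Given $g,h\in\operatorname{Stab}^{ess}(x)$ witnessed by open neighbourhoods $V_g,V_h$, put $W=V_g\cap V_h$; for $y\in W$ we have $y\in U_{h^{-1}}$ and $h.y=y\in V_g\subseteq U_{g^{-1}}$, so the Claim yields $y\in U_{(gh)^{-1}}$ and $(gh).y=g.(h.y)=g.y=y$, whence $gh\in\operatorname{Stab}^{ess}(x)$ via $W$. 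Finally, if $g\in\operatorname{Stab}^{ess}(x)$ via $V$, then $\varphi_g$ restricts to the identity on $V$, so $V=\varphi_g(V)\subseteq U_g=U_{(g^{-1})^{-1}}$, and for $y\in V$ applying $\varphi_{g^{-1}}=\varphi_g^{-1}$ to $g.y=y$ gives $g^{-1}.y=y$; hence $g^{-1}\in\operatorname{Stab}^{ess}(x)$ via the same $V$.

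Once the Claim is available everything is formal bookkeeping; the one point needing genuine care — and the place I regard as the main obstacle — is verifying membership in the correct domain $U_{(gh)^{-1}}$, since this is exactly where the full partial-action compatibility axiom is used, rather than just associativity of the maps where they happen to be defined.
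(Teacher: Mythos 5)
Your proof is correct and takes essentially the same route as the paper: both verify the subgroup axioms by using axiom (2) of a partial action to establish the key membership $x\in U_{(gh)^{-1}}$. The only cosmetic differences are that you package this step as a standalone Claim proved via injectivity of $\varphi_h$, whereas the paper pushes $x$ forward through $g_2^{-1}.(U_{g_2}\cap U_{g_1^{-1}})=U_{g_2^{-1}}\cap U_{(g_1g_2)^{-1}}$ inline, and that the paper deduces part (2) by applying part (1) pointwise on $V_1\cap V_2$ (handling products and inverses in one stroke) rather than rerunning the argument.
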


\begin{proof}(1): Let $g_1, g_2 \in \operatorname{Stab}(x)$. Then, $x \in U_{g_1^{-1}} \cap U_{g_2^{-1}}$ and $g_1.x=x=g_2.x$. Then, since $x \in U_{g_1}\cap U_{g_2}$ and  $g_2^{-1}.x=g_2^{-1}. (g_2.x)=(g_2^{-1} g_2).x=e_G.x=x$, we have $x=g_2^{-1}.x \in g_2^{-1}.(U_{g_2} \cap U_{g_1^{-1}})=U_{g_2^{-1}} \cap U_{g_2^{-1}g_1^{-1}}$. So, $x \in U_{(g_1g_2)^{-1}}$ and $(g_1g_2).x=g_1.(g_2.x)=g_1.x=x$. Thus, $g_1g_2 \in \operatorname{Stab}(x)$.
 Also, clearly, $x \in U_{g_1}$, and we have $g_1^{-1}.x=g_1^{-1}.(g_1.x)=(g_1^{-1}g_1).x=e_G.x=x $. Thus, $g_1^{-1} \in \operatorname{Stab}(x)$. Hence, $\operatorname{Stab}(x)$ is a subgroup of $G$.
 
 (2): Let $g_1, g_2 \in \operatorname{Stab}^{ess}(x)$. Then, there are open neighborhood $V_1$, $V_2$ such that $g_1 \in \operatorname{Stab}(y)$ for all $y \in V_1$ and $g_2 \in \operatorname{Stab}(y)$ for all $y \in V_2$. Then, by (1), we conclude that $g_1g_2^{-1} \in \operatorname{Stab}(y)$ for all $y \in V_1 \cap V_2$. Thus, $g_1g_2^{-1} \in \operatorname{Stab}^{ess}(x)$. Hence, $\operatorname{Stab}^{ess}(x)$ is a subgroup of $G$.
\end{proof}

We call $\operatorname{Stab}(x)$ the {\it stabiliser subgroup} at $x$ in $G$ and $\operatorname{Stab}^{ess}(x)$ the {\it essential stabiliser subgroup} of $x$ in $G$. It is easy to see that $\operatorname{Iso}(G \ltimes X)=\bigcup_{x \in X}  \operatorname{Stab}(x) \times \{x\}$ and that $\operatorname{Iso}(G \ltimes X)^{\circ}=\bigcup_{x \in X}  \operatorname{Stab}^{ess}(x) \times \{x\}$.

\begin{lem}\label{partial system:torsion free and abelian} Let   $G \curvearrowright X$ be a partial dynamical system. Then,  $\operatorname{Stab}^{ess}(x)$ is torsion-free and abelian  for all $x \in X$ if and only if $\operatorname{Iso}(G \ltimes X)^{\circ}$ is  torsion-free  and abelian.
\end{lem}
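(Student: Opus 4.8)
The plan is to reduce both conditions to fibrewise statements about the group bundle $\operatorname{Iso}(G \ltimes X)^{\circ}$. The key observation, already recorded just above the statement, is that $\operatorname{Iso}(G \ltimes X)^{\circ} = \bigcup_{x \in X} \operatorname{Stab}^{ess}(x) \times \{x\}$. In particular every arrow $(g,x)$ of $\operatorname{Iso}(G \ltimes X)^{\circ}$ satisfies $s(g,x) = x = r(g,x)$, so $\operatorname{Iso}(G \ltimes X)^{\circ}$ is a bundle of groups over its unit space, which (since $G \ltimes X$ is \'etale) is all of $X$. Moreover, for a fixed $x \in X$ the isotropy group of $\operatorname{Iso}(G \ltimes X)^{\circ}$ at $x$ is precisely $\operatorname{Stab}^{ess}(x) \times \{x\}$, and the map $g \mapsto (g,x)$ is a group isomorphism $\operatorname{Stab}^{ess}(x) \to \operatorname{Stab}^{ess}(x) \times \{x\}$, since $(g, x)(h,x) = (gh, x)$ for $g, h \in \operatorname{Stab}^{ess}(x)$ and $(g,x)^{-1} = (g^{-1}, x)$.

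For the torsion-free part I would simply invoke the definition of a torsion-free groupoid: $\operatorname{Iso}(G \ltimes X)^{\circ}$ is torsion-free if and only if its isotropy group at every unit $x$ is torsion-free, and by the previous paragraph this isotropy group is isomorphic to $\operatorname{Stab}^{ess}(x)$. Hence $\operatorname{Iso}(G \ltimes X)^{\circ}$ is torsion-free exactly when each $\operatorname{Stab}^{ess}(x)$ is.

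For the abelian part I would first note that in the group bundle $\operatorname{Iso}(G \ltimes X)^{\circ}$ two arrows $(g,x)$ and $(h,y)$ are composable if and only if $y = x$ (source and range both equal the base point), and in that case the product $(gh, x)$ again lies in $\operatorname{Stab}^{ess}(x) \times \{x\}$. Thus $\operatorname{Iso}(G \ltimes X)^{\circ}$ is abelian, i.e. $\gamma\eta = \eta\gamma$ for every composable pair $\gamma,\eta$, if and only if $(g,x)(h,x) = (h,x)(g,x)$ for all $x \in X$ and all $g, h \in \operatorname{Stab}^{ess}(x)$, which via the isomorphism above is equivalent to each $\operatorname{Stab}^{ess}(x)$ being abelian. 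Combining the two equivalences yields the claim.

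There is essentially no serious obstacle here; the argument is a matter of unwinding the relevant definitions. The only point requiring a little care is making explicit that $\operatorname{Iso}(\CG)^{\circ}$ is a group bundle, so that the global ``torsion-free'' and ``abelian'' properties of this subgroupoid are literally the conjunction of the corresponding properties of its fibres — which is exactly what legitimises the passage to the groups $\operatorname{Stab}^{ess}(x)$.
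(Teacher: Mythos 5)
Your proposal is correct and follows essentially the same route as the paper: both arguments rest on the identification $\operatorname{Iso}(G \ltimes X)^{\circ}=\bigcup_{x \in X}\operatorname{Stab}^{ess}(x)\times\{x\}$ and the observation that $g\mapsto(g,x)$ identifies $\operatorname{Stab}^{ess}(x)$ with the isotropy group at $x$, so that torsion-freeness and commutativity are checked fibrewise via $(g,x)^m=(g^m,x)$ and $(g,x)(h,x)=(gh,x)$. The paper simply carries out these element computations explicitly rather than phrasing them in the language of group bundles.
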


\begin{proof} ($\Rightarrow$): Let $(g,x) \in \operatorname{Iso}(G \ltimes X)^{\circ}$ with $g \neq e_G$. Then, $g \in \operatorname{Stab}^{ess}(x)$. Since $\operatorname{Stab}^{ess}(x)$ is torsion-free, $g^m \neq e_G$ for all $m \geq 1$. So, $(g,x)^m=(g^m,x) \neq (e_G, x)$ for all $m \geq 1$. Thus, $\operatorname{Iso}(G \ltimes X)^{\circ}$ is  torsion-free.

Let $(g_1,x), (g_2,x) \in \operatorname{Iso}(G \ltimes X)^{\circ}$. Since  
$\operatorname{Stab}^{ess}(x)$ is  abelian, we have
$$(g_1,x)(g_2,x)=(g_1g_2,x)=(g_2g_1,x)=(g_2,x) (g_1,x).$$
Thus, $\operatorname{Iso}(G \ltimes X)^{\circ}$ is abelian.

 ($\Leftarrow$): Fix $x \in X$. First, choose $g (\neq e_G) \in \operatorname{Stab}^{ess}(x)$. Then, since $(g,x) \in \operatorname{Iso}(G \ltimes X)^{\circ}$ and $\operatorname{Iso}(G \ltimes X)^{\circ}$ is  torsion-free, $(g,x)^m=(g^m,x) \neq (e_G,x)$ for all $m \geq 1$. Thus, $g^m \neq e_G$ for all $m \geq 1$. Hence, $\operatorname{Stab}^{ess}(x)$ is torsion-free  for all $x \in X$.
 
 Secondly, let $g_1, g_2 \in  \operatorname{Stab}^{ess}(x)$. Then, since $(g_1,x), (g_2,x) \in \operatorname{Iso}(G \ltimes X)^{\circ}$ and $\operatorname{Iso}(G \ltimes X)^{\circ}$ is  abelian, we have
 $$(g_1g_2,x)=(g_1,x)(g_2,x)=(g_2,x) (g_1,x)=(g_2g_1,x).$$
So, $g_1g_2=g_2g_1$. Thus, $\operatorname{Stab}^{ess}(x)$ is abelian. 
\end{proof}

\subsection{Deaconu-Renault systems} A {\it Deaconu-Renault system} is defined as a pair $(X, \sigma)$, where $X$ is a locally compact Hausdorff space and $\sigma$ is a local homeomorphism that maps an open set $\operatorname{dom}(\sm) \subseteq X$ to another open set $\operatorname{ran}(\sm) \subseteq X$. We inductively define $\operatorname{dom}(\sm^n):=\sm^{-1}(\operatorname{dom}(\sm^{n-1}))$. 
So, each $\sm^n:\operatorname{dom}(\sm^n) \to \operatorname{ran}(\sm^n)$ is a local homeomorphism and $\sm^m \circ \sm^n=\sm^{m+n}$ on $\operatorname{dom}(\sm^{m+n})$.

The Deaconu-Renault groupoid of $(X, \sigma)$ is 
$$\Gamma(X,\sigma)=\bigcup_{n,m \in \N}\{(x,n-m,y) \in \operatorname{dom}(\sm^{n}) \times \{n-m\}\times \operatorname{dom}(\sm^{m}): \sigma^n(x)=\sigma^m(x)\}$$
equipped with the topology generated by the basic open sets $$Z(U,n,m,V)=\{(x,n,m,y): x \in U, y \in V ~\text{and}~ \sm^n(x)=\sm^m(y)\},
$$  where $n,m \in \N$, $U \subseteq \operatorname{dom}(\sm^{n})$ and $V \subseteq \operatorname{dom}(\sm^{m})$ are open, and $\sm^n|_U$ and $\sm^m|_V$ are homeomorphisms. 
The $\Gamma(X,\sigma)$ is a locally compact Hausdorff \'etale amenable groupoid with unit space $\{(x,0,x): x \in X \}$ identified with $X$.

Let $(X, \sigma)$ be a Deaconu-Renault system and $x \in X$. The {\it stabiliser group} at $x$ is defined by 
$$\operatorname{Stab}(x)=\{m-n: m,n \in \mathbb{N}, \ x \in \operatorname{dom}(\sm^{m}) \cap \operatorname{dom}(\sm^n), ~\text{and}~ \sm^m(x)=\sm^n(x)\} \subseteq \mathbb{Z},$$
and the {\it essential stabiliser group} at $x$ is defined by
\begin{align*} \operatorname{Stab}^{\operatorname{ess}}(x)=\{m-n: \ m,&\ n \in \mathbb{N}  ~\text{and there is an open neighborhood} \\
& ~U \subseteq \operatorname{dom}(\sm^{m}) \cap \operatorname{dom}(\sm^{n}) ~\text{of}~x ~\text{such that}~  \sm^m|_U=\sm^n|_U\}.
\end{align*}
Note that $\operatorname{Stab}^{\operatorname{ess}}(x) \subseteq \operatorname{Stab}(x)$ for each $x \in X$. 
Also, the {\it minimal stabiliser} of $x$ is defined to be 
$$\operatorname{Stab}_{\min}(x)=\min\{n \in\operatorname{Stab}(x) : n \geq 1\},$$
and the {\it minimal essential stabiliser} at $x$ is defined to be 
$$\operatorname{Stab}_{\min}^{\operatorname{ess}}(x)=\min\{n \in\operatorname{Stab}^{\operatorname{ess}}(x) : n \geq 1\},$$
where we define 
 $\min(\emptyset)=\infty$.

The following definition will be used in Corollary \ref{isom-equivalences:oss} and Corollary \ref{equivalences ec}.
\begin{dfn} Let $(X, \sigma)$ and $(Y, \tau)$ be Deaconu-Renault systems.
We say that $(X, \sigma)$ and $(Y, \tau)$ are {\it continuous orbit equivalent} (\cite[Definition 8.1]{CRST}) if there exist a homeomorphism $\phi: X \to Y$ and continuous functions $k, l: \operatorname{dom}(\sigma) \to \mathbb{N}$ and $k',l':\operatorname{dom}(\tau) \to \mathbb{N}$ such that 
$$\tau^{l(x)}(\phi(x))=\tau^{k(x)}(\phi(\sigma(x))) ~\text{and}~ \sigma^{l'(y)}(\phi^{-1}(y))=\sigma^{k'(y)}(\phi^{-1}(\tau(y)))$$
for all $x \in \operatorname{dom}(\sigma)$ and $y \in \operatorname{dom}(\tau) $.  We call $(\phi, k,l,k',l')$ {\it a continuous orbit equivalence} and call $\phi$ the {\it underlying homeomorphism}. 
 We say that $(\phi, k,l,k',l')$ {\it preserve stabilisers} if $\operatorname{Stab}_{\min}(x) < \infty \iff \operatorname{Stab}_{\min}(\phi(x)) < \infty $, and
\begin{align*} \left|\sum_{n=0}^{\operatorname{Stab}_{\min}(\mu)-1} l(\sm^n(x))-k(\sm^n(x))\right| &= \operatorname{Stab}_{\min}(\phi(x)) ~\text{and}~ \\
\left|\sum_{n=0}^{\operatorname{Stab}_{\min}(y)-1} l'(\tau^n(y))-k'(\tau^n(\nu))\right| &= \operatorname{Stab}_{\min}(\phi^{-1}(y))
\end{align*}
whenever $\operatorname{Stab}(x) \neq \{0\}$,  $\operatorname{Stab}(y) \neq \{0\}$, $\sm^{\operatorname{Stab}_{\min}(x)}(x)=x$, and $\tau^{\operatorname{Stab}_{\min}(y)}(y)=y$. 

Likewise, we say that  $(\phi, k,l,k',l')$ {\it preserve essential stabilisers} if 
 $\operatorname{Stab}_{\min}^{\operatorname{ess}}(x) < \infty \iff \operatorname{Stab}_{\min}^{\operatorname{ess}}(\phi(x)) < \infty $, and 
\begin{align*} \left|\sum_{n=0}^{\operatorname{Stab}_{\min}^{\operatorname{ess}}(x)-1} l(\sm^n(x))-k(\sm^n(x))\right| &= \operatorname{Stab}_{\min}^{\operatorname{ess}}(\phi(x)) ~\text{and}~ \\
\left|\sum_{n=0}^{\operatorname{Stab}_{\min}^{\operatorname{ess}}(y)-1} l'(\tau^n(y))-k'(\tau^n(y))\right| &= \operatorname{Stab}_{\min}^{\operatorname{ess}}(\phi^{-1}(y))
\end{align*}
whenever $\operatorname{Stab}_{\min}^{\operatorname{ess}}(x) \neq \{0\}$, $\operatorname{Stab}_{\min}^{\operatorname{ess}}(y)\neq \{0\}$, $\sm^{\operatorname{Stab}_{\min}^{\operatorname{ess}}(x)}(x)=x$, and $\tau^{\operatorname{Stab}_{\min}^{\operatorname{ess}}(y)}(y)=y$. 
\end{dfn}

\begin{dfn}
Let $(X, \sigma)$ and $(Y, \tau)$ be Deaconu-Renault systems. We say that $(X, \sigma)$ and $(Y, \tau)$ are {\it eventually conjugate} (\cite[Definition 8.9]{CRST}) if there is a stabiliser-preserving continuous orbit equivalence $(\phi, k,l,k',l')$ such that $l(x)=k(x)+1$ for all $x \in X$.
\end{dfn}

\section{Orbit morphisms between partial dynamical systems}\label{Sec 3}

In this section, we define a notion of orbit morphism between partial dynamical systems as well as a composition between theses morphism in order to obtain a category, which we denote by $\PDS$. We then build a fully faithful functor $F:\PDS\to\TG$, where $\TG$ is the category of topological groupoids with continuous homomorphisms (ie, continous functors) as morphisms. At the level of objects, the functor sends a partial dynamical system to its transformation groupoid.

\begin{dfn}\label{def orbit morphism}
    Let $G \curvearrowright X$ and $H \curvearrowright Y$ be partial dynamical systems. An {\em orbit morphism} between $G \curvearrowright X$ and $H \curvearrowright Y$ is a pair $(\phi,a)$, where $a: \bigcup_{g \in G} \{g\}\times U_{g^{-1}} \to H$ and $\phi:X\to Y$ are continuous functions such that:
    \begin{enumerate}
        \item\label{om preserves action} $\phi(x)\in V_{a(g,x)^{-1}}$ (where $V_{h^{-1}}$ is the domain of the partial homeomorphism attached to $h \in H$) and $\phi(g.x)=a(g,x).\phi(x)$, for all $g\in G$ and $x\in U_{g^{-1}}$,
        \item\label{om is a cocyle} $a(g_1g_2, x)=a(g_1,g_2.x)a(g_2,x)$ for all $g_1,g_2 \in G$ and $x \in U_{g_2^{-1}}\cap U_{(g_1g_2)^{-1}}$.
    \end{enumerate}
  We call the map $a$  a {\it cocycle} and write $(\phi,a): G \curvearrowright X \to H \curvearrowright Y$ to denote the orbit morphism $(\phi,a)$ between $G \curvearrowright X$ and $H \curvearrowright Y$.
\end{dfn}

\begin{remark} Let  $a: \bigcup_{g \in G} \{g\}\times U_{g^{-1}} \to H$ be a map. If $a$ is a cocycle, then we have $a(e_G,x)=a(e_Ge_G,x)=a(e_G, e_G.x)a(e_G,x)=a(e_G,x)^2$ for all $x \in X$, from which it follows that $a(e_G,x)=e_H$ for all $x \in X$. 
\end{remark}

\begin{remark}
Condition \eqref{om preserves action} of Definition~\ref{def orbit morphism} implies that, given an orbit morphism $(\phi,a)$ from $G\curvearrowright X$ to $H\curvearrowright Y$, for each $x\in X$ the map $a(\cdot,x):G_x\to H_{\phi(x)}$ is well-defined.
\end{remark}

The following lemma shows that the natural candidate for a composition of orbit morphisms is indeed an orbit morphism.

\begin{lem-def}
    Given orbit morphism $(\phi,a)$ between $G \curvearrowright X$ and $H \curvearrowright Y$, and $(\psi,b)$ between $H \curvearrowright Y$ and $K \curvearrowright Z$, consider the pair $(\psi\circ\phi,c)$, where $c:\bigcup_{g \in G} \{g\}\times U_{g^{-1}} \to K$ is given by $c(g,x)=b(a(g,x),\phi(x))$. Then $(\psi\circ\phi,c)$ is an orbit morphism between $G \curvearrowright X$ and $H \curvearrowright Y$. We then define the composition $(\psi,b)\circ (\phi,a)$ as $(\psi\circ\phi,c)$.
\end{lem-def}

\begin{proof}
    Note that $c$ is well-defined by Condition~\eqref{om preserves action} of Definition~\ref{def orbit morphism}.
    
    (1) Let $g\in G$ and $x\in U_{g^{-1}}$. We have that $\psi(\phi(x))\in W_{c(g,x)^{-1}}$ because $\phi(x)\in V_{a(g,x)^{-1}}$ and $(\psi,b)$ is an orbit morphism. Moreover,
    \begin{align*} \psi \circ \phi(g.x) &=\psi(\phi(g.x)) \\
     &=\psi(a(g,x).\phi(x)) \\
     &=b(a(g,x), \phi(x)).\psi(\phi(x)) \\
     &=c(g,x).\psi(\phi(x)) \\
     &=c(g,x).(\psi\circ\phi)(x).
     \end{align*}     

    (2) Let $g_1,g_2\in G$ and $x\in U_{g_2^{-1}}\cap U_{(g_1g_2)^{-1}}$. Then $\phi(x)\in V_{a(g_2,x)^{-1}}\cap V_{a(g_1g_2,x)^{-1}}$ and     
     \begin{align*} c(g_1g_2,x) &=b(a(g_1g_2,x), \phi(x)) \\
     &=b(a(g_1,g_2.x)a(g_2,x), \phi(x))\\
     &=b(a(g_1, g_2.x),a(g_2,x).\phi(x) )b(a(g_2,x), \phi(x)) \\
     &=b(a(g_1, g_2.x),\phi(g_2.x))c(g_2,x)\\
     &=c(g_1, g_2.x)c(g_2,x).
     \end{align*}
     
     It follows that $(\psi\circ\phi,c)$ is an orbit morphism.
\end{proof}

\begin{lem}
    The composition between orbit morphisms is associative and has identities. More precisely, $id_{G \curvearrowright X}=(id_X,i)$, where $i(g,x)=g$ for all $g\in G$ and $x\in U_{g^{-1}}$.
\end{lem}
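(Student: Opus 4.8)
The plan is to verify the three assertions --- that composition is associative, that $(id_X,i)$ is a morphism, and that it acts as a two-sided identity --- by unwinding the definitions of the composition introduced in the preceding Lemma-Definition. All three are direct calculations, so the work is bookkeeping rather than finding an idea; the only mild subtlety is keeping track of which cocycle is evaluated at which point.

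First I would check that $(id_X,i)$ is an orbit morphism from $G\curvearrowright X$ to itself. Condition \eqref{om preserves action} of Definition~\ref{def orbit morphism} holds because $id_X(g.x)=g.x=i(g,x).id_X(x)$ and trivially $x\in U_{i(g,x)^{-1}}=U_{g^{-1}}$; condition \eqref{om is a cocyle} is exactly $i(g_1g_2,x)=g_1g_2=g_1\cdot g_2=i(g_1,g_2.x)\,i(g_2,x)$. Continuity of $i$ on $\bigcup_{g\in G}\{g\}\times U_{g^{-1}}$ is immediate since $i$ is locally the projection to $G$ and $G$ is discrete, and $id_X$ is continuous.

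Next I would verify the identity laws. Given $(\phi,a):G\curvearrowright X\to H\curvearrowright Y$, the composition $(\phi,a)\circ(id_X,i)$ has underlying map $\phi\circ id_X=\phi$ and cocycle $(g,x)\mapsto a(i(g,x),id_X(x))=a(g,x)$, so it equals $(\phi,a)$; and $(id_Y,i_H)\circ(\phi,a)$ has underlying map $id_Y\circ\phi=\phi$ and cocycle $(g,x)\mapsto i_H(a(g,x),\phi(x))=a(g,x)$, so it again equals $(\phi,a)$. For associativity, take orbit morphisms $(\phi,a):G\curvearrowright X\to H\curvearrowright Y$, $(\psi,b):H\curvearrowright Y\to K\curvearrowright Z$, and $(\rho,d):K\curvearrowright Z\to L\curvearrowright W$. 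The underlying maps of both $((\rho,d)\circ(\psi,b))\circ(\phi,a)$ and $(\rho,d)\circ((\psi,b)\circ(\phi,a))$ are $\rho\circ\psi\circ\phi$. Writing out the cocycles, one side sends $(g,x)$ to $(d\circ(\psi,b))(a(g,x),\phi(x))=d\bigl(b(a(g,x),\phi(x)),\psi(\phi(x))\bigr)$, and the other sends $(g,x)$ to $d\bigl((b\circ(\phi,a))(g,x),(\psi\circ\phi)(x)\bigr)=d\bigl(b(a(g,x),\phi(x)),\psi(\phi(x))\bigr)$; these agree, so the two composites coincide.

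There is no real obstacle here: the Lemma-Definition already did the only nontrivial step, namely confirming that the proposed composite is again an orbit morphism. What remains is purely formal. The one point to be careful about --- and the reason the calculation looks slightly asymmetric --- is that the composition formula $c(g,x)=b(a(g,x),\phi(x))$ feeds the \emph{first} cocycle's output into the second cocycle evaluated at the $\phi$-image of the basepoint, so when iterating one must consistently track the sequence of basepoints $x\mapsto\phi(x)\mapsto\psi(\phi(x))$; once this is done, associativity falls out of the associativity of function composition together with this substitution pattern.
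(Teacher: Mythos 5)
Your proposal is correct and follows essentially the same route as the paper: a direct unwinding of the composition formula to check the identity laws and associativity of the cocycles. The only difference is that you additionally verify that $(id_X,i)$ is itself an orbit morphism, a worthwhile check that the paper leaves implicit.
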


\begin{proof}
    (Associativity): For $(\phi,a): G \curvearrowright X \to H \curvearrowright Y$, $(\psi,b): H \curvearrowright Y \to K \curvearrowright Z$ and $(\pi,d):  K \curvearrowright Z \to P \curvearrowright W$,  
     $$(\pi,d) \circ ((\psi,b)\circ(\phi,a))=(\pi,d) \circ (\psi\circ\phi,c)=(\pi \circ \psi \circ \phi, d'),$$
     where $d'(g,x)=d(c(g,x), \psi \circ \phi(x))=d(b(a(g,x), \phi(x)) , \psi \circ \phi(x))$.
     Also, 
     $$((\pi,d) \circ (\psi,b))\circ(\phi,a)=(\pi \circ \psi,e) \circ (\phi,c)=(\pi \circ \psi \circ \phi, e'),$$      where $e'(g,x)=e(c(g,x), \phi(x))=d(b(a(g,x), \phi(x)), \psi \circ \phi(x)))$.
     
     Thus, $(\pi,d) \circ ((\psi,b)\circ(\phi,a))=((\pi,d) \circ (\psi,b))\circ(\phi,a)$.
 
     (Identities): For  given an orbit morphism $(\phi,a)$ between $G \curvearrowright X$ and $H \curvearrowright Y$, 
     $$(\phi,a)\circ (id_X, i) =(\phi \circ id_X, c),$$
     where $c(g,x)=a(i(g,x), id_X(x))=a(g, x)$. Thus,$ (\phi,a)\circ (id_X, i) =(\phi,a)$. Also, for  given an orbit morphism $(\pi,d)$ between $H \curvearrowright Y$ and $G \curvearrowright X$, 
     $$ (id_X,i)\circ (\pi, d)=(id_X \circ \pi, d'),$$
     where $d'(h,y)=i(d(h,y), \pi(y))=d(h,y)$. Thus, $(id_X,i)\circ (\pi, d)=(\pi,d)$.
\end{proof}

\begin{dfn}
    We define $\PDS$ the category whose objects are partial dynamical systems and morphisms are orbit morphisms.
\end{dfn}

As mentioned at the beginning of the section, we want to define a functor from $\PDS$ to $\TG$ that at the level of objects sends a partial dynamical system to its transformation groupoids. We need to describe what happens at the level of morphisms.

\begin{lem}\label{orbit mor. gives a mor.}
    Let $(\phi,a)$ be an orbit morphism between $G \curvearrowright X$ and $H \curvearrowright Y$, and define $\Theta_{(\phi,a)}:G \ltimes X  \to H \ltimes Y$ by $\Theta_{(\phi,a)}(g,x)=(a(g,x),\phi(x))$. Then $\Theta_{(\phi,a)}$ is a continuous homomorphism.
\end{lem}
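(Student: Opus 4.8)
The plan is to verify directly that $\Theta_{(\phi,a)}$ is well-defined, maps into $H\ltimes Y$, respects the groupoid structure, and is continuous. First I would check that $\Theta_{(\phi,a)}$ lands in $H\ltimes Y$: given $(g,x)\in G\ltimes X$, by definition $x\in U_{g^{-1}}$, so $a(g,x)$ is defined and, by Condition~\eqref{om preserves action} of Definition~\ref{def orbit morphism}, $\phi(x)\in V_{a(g,x)^{-1}}$; hence $(a(g,x),\phi(x))\in H\ltimes Y$. Note also that $\Theta_{(\phi,a)}$ is well-defined because $a$ and $\phi$ are (single-valued) functions.

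Next I would check compatibility with the units, range, and source maps, so that $\Theta_{(\phi,a)}$ is a groupoid homomorphism in the functorial sense. On unit spaces it sends $(e_G,x)$ to $(a(e_G,x),\phi(x))=(e_H,\phi(x))$ by the remark following Definition~\ref{def orbit morphism}, so it restricts to $\phi$ on $X=(G\ltimes X)^{(0)}$. For composability: if $(g_1,g_2.x)(g_2,x)=(g_1g_2,x)$ with $x\in U_{g_2^{-1}}\cap U_{(g_1g_2)^{-1}}$, then
\begin{align*}
\Theta_{(\phi,a)}(g_1,g_2.x)\,\Theta_{(\phi,a)}(g_2,x)
&=(a(g_1,g_2.x),\phi(g_2.x))\,(a(g_2,x),\phi(x))\\
&=(a(g_1,g_2.x),a(g_2,x).\phi(x))\,(a(g_2,x),\phi(x))\\
&=(a(g_1,g_2.x)a(g_2,x),\phi(x))\\
&=(a(g_1g_2,x),\phi(x))\\
&=\Theta_{(\phi,a)}(g_1g_2,x),
\end{align*}
where the second equality uses Condition~\eqref{om preserves action} ($\phi(g_2.x)=a(g_2,x).\phi(x)$), the third is the composition rule in $H\ltimes Y$ (which requires $\phi(g_2.x)\in V_{a(g_2,x)}$, guaranteed since $\phi(x)\in V_{a(g_2,x)^{-1}}$), and the fourth uses the cocycle Condition~\eqref{om is a cocyle}. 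This shows $\Theta_{(\phi,a)}$ preserves products, and since a groupoid homomorphism preserving products automatically preserves inverses and the range/source structure is determined accordingly, it is a genuine homomorphism of groupoids.

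Finally I would address continuity. The topology on $G\ltimes X$ is the subspace topology from $G\times X$ with $G$ discrete, so $G\ltimes X=\bigsqcup_{g\in G}\{g\}\times U_{g^{-1}}$, and similarly for $H\ltimes Y$. It suffices to show that the restriction of $\Theta_{(\phi,a)}$ to each clopen piece $\{g\}\times U_{g^{-1}}$ is continuous. On this piece, $\Theta_{(\phi,a)}(g,x)=(a(g,x),\phi(x))$; the second coordinate $x\mapsto\phi(x)$ is continuous by hypothesis, and the first coordinate $x\mapsto a(g,x)$ is continuous by the assumed continuity of $a$ (restricted to $\{g\}\times U_{g^{-1}}$, landing in the discrete group $H$). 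Since $\{h\}\times V_{h^{-1}}$ form a clopen cover of $H\ltimes Y$, continuity into each factor gives continuity of $\Theta_{(\phi,a)}$ into $H\ltimes Y$. There is no serious obstacle here; the only point requiring care is tracking that each domain condition ($\phi(x)\in V_{a(g,x)^{-1}}$, and hence $\phi(g_2.x)\in V_{a(g_2,x)}$) holds so that the expressions in the composition computation are legitimate, which is exactly what Condition~\eqref{om preserves action} of Definition~\ref{def orbit morphism} supplies.
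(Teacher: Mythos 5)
Your proposal is correct and follows essentially the same route as the paper's proof: verify that composable pairs are sent to composable pairs using Condition (1) of the definition, and verify multiplicativity via the cocycle identity together with $\phi(g_2.x)=a(g_2,x).\phi(x)$. Your treatment of continuity (decomposing into the clopen pieces $\{g\}\times U_{g^{-1}}$) and of well-definedness is more explicit than the paper's, which simply asserts continuity is clear, but the substance is identical.
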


\begin{proof}
    Define $\Theta:=\Theta_{(\phi,a)}:G \ltimes X  \to H \ltimes Y $ by $\Theta(g,x)=(a(g,x), \varphi(x))$ for all $g \in G$ and $x \in U_{g^{-1}}$.
Clearly, $\Theta$ is continuous. 
For $((g_1,x_1),(g_2,x_2)) \in (G \ltimes X)^{(2)}$, we see that
\begin{align*}  \Theta \times \Theta ((g_1, g_2.x_2),(g_2,x_2))&=(\Theta(g_1, g_2.x_2), \Theta(g_2,x_2)) \\
&=((a(g_1, g_2.x_2),\phi(g_2.x_2)),(a(g_2,x_2), \phi(x_2)) ).
\end{align*}
Since $\phi(g_2.x_2)=a(g_2,x_2).\phi(x_2)$, we have $s(a(g_1, g_2.x_2),\phi(g_2.x_2))=r(a(g_2,x_2), \phi(x_2))$. It means that $\Theta \times \Theta ((G \ltimes X)^{(2)}) \subseteq (H \ltimes Y)^{(2)}$.
Also, for $((g_1,x_1),(g_2,x_2)) \in (G \ltimes X)^{(2)}$, we have
\begin{align*} 
\Theta((g_1,x_1)(g_2,x_2)) &=\Theta(g_1g_2,x_2) \\
&=(a(g_1g_2,x_2), \phi(x_2)) \\
&=(a(g_1, g_2.x_2)a(g_2,x_2), \phi(x_2)) \\
&=(a(g_1, g_2.x_2), a(g_2,x_2).\phi(x_2))(a(g_2,x_2),\phi(x_2)) \\
&=(a(g_1, g_2.x_2), \phi(g_2.x_2))(a(g_2,x_2),\phi(x_2)) \\
&=(a(g_1, x_1), \phi(x_1))(a(g_2,x_2),\phi(x_2)) \\
&=\Theta(g_1,x_1)\Theta(g_2,x_2).
\end{align*}
Thus, $\Theta$ is a homomorphism. 
\end{proof}

\begin{thm}\label{thm: functor}
    There exists a fully faithful functor $F:\PDS\to\TG$ that associates a partial dynamical system $G\curvearrowright X$ with its transformation groupoid $G\ltimes X$, and an orbit morphism $(\phi,a)$ between $G \curvearrowright X$ and $H \curvearrowright Y$ with the continuous homomorphism $\Theta_{(\phi,a)}:G \ltimes X  \to H \ltimes Y$ given by $\Theta_{(\phi,a)}(g,x)=(a(g,x),\phi(x))$.
\end{thm}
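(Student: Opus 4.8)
The plan is to verify, in order, that $F$ is (a) well-defined on objects and morphisms, (b) functorial, (c) faithful, and (d) full; the functoriality and faithfulness are essentially bookkeeping already set up by the preceding lemmas, so the real work is fullness. On objects $F$ sends $G\curvearrowright X$ to $G\ltimes X$, which is a topological groupoid; on morphisms, Lemma~\ref{orbit mor. gives a mor.} shows $\Theta_{(\phi,a)}$ is a continuous homomorphism, so $F$ is well-defined. Functoriality is a direct computation: $F(id_{G\curvearrowright X})=\Theta_{(id_X,i)}$ sends $(g,x)\mapsto(i(g,x),id_X(x))=(g,x)$, so it is the identity homomorphism on $G\ltimes X$; and given composable orbit morphisms $(\phi,a)$ and $(\psi,b)$, the composite $(\psi,b)\circ(\phi,a)=(\psi\circ\phi,c)$ with $c(g,x)=b(a(g,x),\phi(x))$ yields $\Theta_{(\psi\circ\phi,c)}(g,x)=(c(g,x),\psi(\phi(x)))=(b(a(g,x),\phi(x)),\psi(\phi(x)))=\Theta_{(\psi,b)}(a(g,x),\phi(x))=\Theta_{(\psi,b)}(\Theta_{(\phi,a)}(g,x))$, so $F((\psi,b)\circ(\phi,a))=F(\psi,b)\circ F(\phi,a)$.

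For faithfulness, suppose $\Theta_{(\phi,a)}=\Theta_{(\phi',a')}$ as maps $G\ltimes X\to H\ltimes Y$. Evaluating at $(e_G,x)$ for each $x\in X$ gives $(e_H,\phi(x))=(e_H,\phi'(x))$, hence $\phi=\phi'$; evaluating at arbitrary $(g,x)$ with $x\in U_{g^{-1}}$ then gives $a(g,x)=a'(g,x)$, so $(\phi,a)=(\phi',a')$. (Here one uses that the unit space $\{e_G\}\times X$ sits inside $G\ltimes X$, which is immediate since $U_{e_G}=X$.)

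The main obstacle is fullness: given a continuous homomorphism $\Theta:G\ltimes X\to H\ltimes Y$, one must produce an orbit morphism $(\phi,a)$ with $\Theta_{(\phi,a)}=\Theta$. The idea is to restrict $\Theta$ to the unit spaces. Since $\Theta$ is a groupoid homomorphism it carries units to units, i.e. it carries $\{e_G\}\times X$ into $\{e_H\}\times Y$, inducing a continuous map $\phi:X\to Y$ determined by $\Theta(e_G,x)=(e_H,\phi(x))$. For the cocycle, write $\Theta(g,x)=(a(g,x),\beta(g,x))$ for continuous functions $a:\bigcup_{g}\{g\}\times U_{g^{-1}}\to H$ and $\beta$ into $Y$; the key points are that (i) compatibility with the source map, $s\circ\Theta=\phi\circ s$, forces $\beta(g,x)=\phi(x)$, so indeed $\Theta(g,x)=(a(g,x),\phi(x))$; (ii) compatibility with the range map, $r\circ\Theta=\phi\circ r$, gives $a(g,x).\phi(x)=\phi(g.x)$ and in particular $\phi(x)\in V_{a(g,x)^{-1}}$, which is Condition~\eqref{om preserves action}; and (iii) multiplicativity of $\Theta$ on composable pairs $(g_1,g_2.x),(g_2,x)$ — whose product in $G\ltimes X$ is $(g_1g_2,x)$ — unwinds to $a(g_1g_2,x)=a(g_1,g_2.x)a(g_2,x)$, which is the cocycle Condition~\eqref{om is a cocyle}. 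Continuity of $a$ follows from continuity of $\Theta$ composed with the (continuous) coordinate projection $H\ltimes Y\to H$. Hence $(\phi,a)$ is an orbit morphism and by construction $\Theta_{(\phi,a)}=\Theta$, completing the proof. I expect the only subtlety to be checking that the domains match up — that the element $(g,x)\in G\ltimes X$ is sent to a genuine element of $H\ltimes Y$, i.e. $\phi(x)\in V_{a(g,x)^{-1}}$ — but this is exactly what the range-map compatibility delivers.
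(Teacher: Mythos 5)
Your proposal is correct and follows essentially the same route as the paper: functoriality by the same direct computations, and fullness by restricting $\Theta$ to unit spaces and projecting to the first coordinate, verifying the two orbit-morphism conditions via compatibility with range/source and multiplicativity. The only cosmetic difference is that the paper packages fullness and faithfulness together by exhibiting $\Theta\mapsto(\phi_\Theta,a_\Theta)$ as a two-sided inverse of $F$ on hom-sets, whereas you prove faithfulness separately by evaluating at units; also note that $\phi(x)\in V_{a(g,x)^{-1}}$ is immediate from $\Theta(g,x)$ being an element of $H\ltimes Y$ rather than from range-map compatibility, but this does not affect the argument.
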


\begin{proof}
    We start by proving that $F$ preserves compositions and identities.
    
    For a partial dynamical system $G \curvearrowright X$, let $(id_X,i)$ be its identity. Then,
    \begin{align*} \Theta_{(id_X,i)}(g,x)=(i(g,x), id_X(x))=(g,x)\end{align*}
Thus, $\Theta_{(id_X,i)}=id_{G \ltimes X}$.

 For $(\phi,a): G \curvearrowright X \to H \curvearrowright Y$ and $(\psi,b): H \curvearrowright Y \to K \curvearrowright Z$, 
\begin{align*} \Theta_{(\psi,b)\circ(\phi,a)}(g,x)=\Theta_{(\psi\circ\phi,c)}(g,x)=(c(g,x), \psi\circ\phi(x))= (b(a(g,x),\phi(x)), \psi\circ\phi(x)) ,
\end{align*}
and 
\begin{align*} \Theta_{(\psi,b)} \circ \Theta_{(\phi,a)}(g,x)=\Theta_{(\psi,b)}(a(g,x), \phi(x))=(b(a(g,x),\phi(x)), \psi\circ\phi(x)).
\end{align*}
Thus, $\Theta_{(\psi,b)\circ(\phi,a)}= \Theta_{(\psi,b)} \circ \Theta_{(\phi,a)}$.

Now, given partial dynamical systems $G \curvearrowright X$ and $H \curvearrowright Y$, we show that the function $F_{G \curvearrowright X,H \curvearrowright Y}:\Hom(G \curvearrowright X,H \curvearrowright Y)\to \Hom(G \ltimes X,H \ltimes Y)$ is bijective. We do this by building an inverse. Let
$\Theta:G \ltimes X  \to H \ltimes Y$ be a continuous homomorphism. Define $\phi_{\Theta}$ to be the restriction of $\Theta$ to the unit spaces, and $a_{\Theta}(g,x)$ to be the projection to the first coordinate of $\Theta(g,x)$. Note that $\Theta(g,x)=(a_{\Theta}(g,x),\phi_{\Theta}(x))$ by construction. Let us prove that $(\phi_{\Theta},a_{\Theta})$ is an orbit morphism.

\eqref{om preserves action} For $g\in G$ and $x\in U_{g^{-1}}$, we have that $\phi_{\Theta}(x)\in V_{a_{\Theta(g,x)^{-1}}}$ by construction. Moreover,
$$
    \phi_{\Theta}(g.x) =\Theta(g.x)=\Theta(r(g,x))=r(\Theta(g,x))=a_{\Theta}(g,x).\phi_{\Theta}(x).
$$

\eqref{om is a cocyle} Let $g_1, g_2\in G$ and $x\in  U_{g_2^{-1}}\cap U_{(g_1g_2)^{-1}}$. We have that
\begin{align*}
    (a_{\Theta}(g_1g_2,x),\phi_{\Theta}(x)) &=\Theta(g_1g_2,x)\\
    &=\Theta((g_1,g_2.x)(g_2,x))\\
    &=\Theta(g_1,g_2.x)\Theta(g_2,x)\\
    &=(a_{\Theta}(g_1,g_2.x),\phi_{\Theta}(g_2.x))(a_{\Theta}(g_2,x),\phi_{\Theta}(x))\\
    &=(a_{\Theta}(g_1,g_2.x),a_{\Theta}(g_2,x).\phi_{\Theta}(x))(a_{\Theta}(g_2,x),\phi_{\Theta}(x))\\
    &=(a_{\Theta}(g_1,g_2.x)a_{\Theta}(g_2,x),\phi_{\Theta}(x)).
\end{align*}
Thus $a_{\Theta}(g_1g_2,x)=a_{\Theta}(g_1,g_2.x)a_{\Theta}(g_2,x)$.

It is clear to see that the mapping $\Theta\mapsto (a_{\Theta},\phi_{\Theta})$ is then the inverse of $F_{G \curvearrowright X,H \curvearrowright Y}$.
\end{proof}

A question that arises from the above theorem is when an étale groupoid isomorphic to a transformation groupoid for some partial action on its unit space. The goal of next theorem is to answer this question.

\begin{thm}\label{thm:transf.groupoid}
	Let $\mathcal{G}$ be an étale groupoid with unit space $X$. The following are equivalent:
	\begin{enumerate}
		\item\label{i:iso} There exists a group $H$ and a topological partial action $\theta=(\{X_h\}_{h\in H},\{\theta_h:X_{h^{-1}}\to X_h\})$ of $H$ on $X$ such that $\mathcal{G}$ is isomorphic to $H\ltimes_{\theta} X$.
		\item\label{i:partition} There exists a partition $P$ of $\mathcal{G}$ such that the elements of $P$ are open bisections of $\mathcal{G}$ satisfying:
		\begin{itemize}
			\item $X\in P$,
			\item for all $U\in P$, we have $U^{-1}\in P$,
			\item for all $U,V\in P$, there exists $W\in P$ such that $UV\scj W$.
		\end{itemize}
		\item\label{i:cocyle} There exists a group $H$ and a continuous cocycle $c:\mathcal{G}\to H$ such that $c^{-1}(e)=X$, where $e$ is the identity of $H$.
	\end{enumerate}
\end{thm}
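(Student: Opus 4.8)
## Proof Proposal

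The plan is to prove the cycle of implications $\eqref{i:iso}\Rightarrow\eqref{i:cocyle}\Rightarrow\eqref{i:partition}\Rightarrow\eqref{i:iso}$, since each of these arrows is the most direct route given the tools at hand.

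For $\eqref{i:iso}\Rightarrow\eqref{i:cocyle}$: if $\mathcal{G}\cong H\ltimes_\theta X$, transport the projection-to-first-coordinate map $H\ltimes_\theta X\to H$, $(h,x)\mapsto h$, along the isomorphism. This is continuous because $H$ is discrete and each set $\{h\}\times X_{h^{-1}}$ is open; it is a cocycle by the groupoid structure (the composition $(h_1,h_2.x)(h_2,x)=(h_1h_2,x)$); and its fibre over $e_H$ is exactly $\{e_H\}\times X_{e_H^{-1}}=\{e_H\}\times X$, which is the unit space. Thus the pulled-back map $c:\mathcal{G}\to H$ satisfies $c^{-1}(e_H)=X$.

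For $\eqref{i:cocyle}\Rightarrow\eqref{i:partition}$: given $c:\mathcal{G}\to H$ continuous with $c^{-1}(e)=X$, set $P:=\{c^{-1}(h): h\in H,\ c^{-1}(h)\neq\emptyset\}$. This is a partition of $\mathcal{G}$ into open sets (each $c^{-1}(h)$ is open since $H$ is discrete and $c$ continuous). I then need each $c^{-1}(h)$ to be a \emph{bisection}, i.e.\ $r$ and $s$ restrict to homeomorphisms onto open subsets of $X$. Since $\mathcal{G}$ is étale, $r$ and $s$ are local homeomorphisms, so it suffices to show $r$ and $s$ are injective on $c^{-1}(h)$. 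If $\gamma,\gamma'\in c^{-1}(h)$ with $s(\gamma)=s(\gamma')$, then $\gamma'\gamma^{-1}$ is composable and $c(\gamma'\gamma^{-1})=c(\gamma')c(\gamma)^{-1}=hh^{-1}=e$, so $\gamma'\gamma^{-1}\in c^{-1}(e)=X$, forcing $\gamma'=\gamma$; similarly for $r$ using $\gamma^{-1}\gamma'$. The axioms are then immediate: $X=c^{-1}(e)\in P$; $(c^{-1}(h))^{-1}=c^{-1}(h^{-1})\in P$ since $c(\gamma^{-1})=c(\gamma)^{-1}$; and $c^{-1}(h_1)\,c^{-1}(h_2)\subseteq c^{-1}(h_1h_2)$ by the cocycle identity, so $W=c^{-1}(h_1h_2)$ works (and is nonempty, hence in $P$, whenever the product is nonempty — if the product is empty the inclusion is trivial, but then we may take any $W\in P$, or note $W\in P$ is only required to exist).

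For $\eqref{i:partition}\Rightarrow\eqref{i:iso}$ --- the heart of the argument --- I build a partial action from the partition. The third axiom lets me define, for $U,V\in P$, a product in $P$: I first check that the $W$ with $UV\subseteq W$ is unique when $UV\neq\emptyset$ (two elements of $P$ containing a common point coincide, as $P$ is a partition), so $P$ becomes a partial semigroup; together with $X\in P$ acting as identity and $U\mapsto U^{-1}$, this makes $P$ something close to a group of partial symmetries. The cleanest route: let $H$ be the \emph{abstract group} generated by symbols $[U]$, $U\in P$, modulo the relations $[U][V]=[W]$ whenever $UV\neq\emptyset$ and $UV\subseteq W$, $[X]=e$, and $[U^{-1}]=[U]^{-1}$; define $c:\mathcal{G}\to H$ by $c(\gamma)=[U]$ for the unique $U\in P$ with $\gamma\in U$. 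One checks $c$ is a continuous cocycle with $c^{-1}(e)=X$ (using that $c(\gamma)=e$ forces, via $\gamma\in U$ and $[U]=e$, that $U=X$ --- here a small argument is needed that the quotient group does not collapse $[U]$ to $e$ for $U\neq X$; this can be arranged by instead realizing $H$ concretely inside the group of bisections of $\operatorname{Iso}(\mathcal G)^\circ$-cosets, or by the standard groupoid-cocycle construction). Then define $\theta_h$ on $X_{h^{-1}}:=s(c^{-1}(h))$ by $\theta_h(x)=r(\gamma)$ where $\gamma$ is the unique element of $c^{-1}(h)$ with $s(\gamma)=x$; the bisection property makes each $\theta_h$ a homeomorphism $X_{h^{-1}}\to X_h$, the cocycle identity gives the partial-action axioms, and $(\gamma)\mapsto(c(\gamma),s(\gamma))$ is the desired isomorphism $\mathcal{G}\to H\ltimes_\theta X$ (injectivity: if $c(\gamma)=c(\gamma')$ and $s(\gamma)=s(\gamma')$ then $\gamma,\gamma'$ lie in the same bisection with the same source, so $\gamma=\gamma'$; surjectivity and bicontinuity are routine from étaleness).

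I expect the main obstacle to be the construction of the group $H$ in $\eqref{i:partition}\Rightarrow\eqref{i:iso}$ and, specifically, verifying $c^{-1}(e)=X$ --- that is, ensuring the group relations do not accidentally identify some $[U]$ with $U\neq X$ to the identity. The safe way is not to pass to an abstract group presentation but to take $H$ to be the subgroup of bijections of $X$ generated by the partial bijections $\gamma\mapsto$ (source/range data) --- or, more robustly, to use the well-known fact (this is essentially Exel's globalization / the structure of étale groupoids with a cocycle) that the partition data is precisely what is needed; I would phrase the group as $H=P^*/{\sim}$ where $P^*$ is the set of reduced words in $\bigcup P$ under the partial product, and verify directly that $[U]=e$ iff $U=X$ using that $X\in P$ is a clopen subgroupoid meeting each coset of $\operatorname{Iso}(\mathcal G)^\circ$ appropriately. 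All other steps --- continuity (free from discreteness of $H$), the cocycle identities, and the homeomorphism properties of the $\theta_h$ --- are routine consequences of étaleness and the bisection axioms.
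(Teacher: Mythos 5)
Your cycle (1)$\Rightarrow$(3)$\Rightarrow$(2)$\Rightarrow$(1) is a permutation of the paper's cycle (1)$\Rightarrow$(2)$\Rightarrow$(3)$\Rightarrow$(1), but the three constructions involved are the same in both: reading the partition or the cocycle off a transformation groupoid is immediate; each nonempty fibre $c^{-1}(h)$ of a continuous cocycle with $c^{-1}(e)=X$ is an open bisection (your argument for this in (3)$\Rightarrow$(2), via $c(\gamma'\gamma^{-1})=e$, is exactly the paper's, which uses it in its (3)$\Rightarrow$(1)); and the partial action $\theta_h(x)=r\bigl((s|_{c^{-1}(h)})^{-1}(x)\bigr)$ on $X_{h^{-1}}=s(c^{-1}(h))$ together with $\gamma\mapsto(c(\gamma),s(\gamma))$ gives the isomorphism with $H\ltimes_\theta X$. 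All of that is correct and matches the paper.

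The gap is exactly where you flag it, and your proposal does not close it: in (2)$\Rightarrow$(1) you must produce the group $H$ from the partition and certify that $[U]=e$ forces $U=X$, and neither of your suggested escapes works as stated. Realizing $H$ ``inside the group of bisections of $\operatorname{Iso}(\mathcal{G})^\circ$-cosets'' is not developed and it is unclear what group that is; and ``reduced words under the partial product'' is just the presented group again, so ``verify directly that $[U]=e$ iff $U=X$'' restates the problem rather than solving it (note also that the product on $P\cup\{0\}$ defined by $U\cdot V=W$ when $\emptyset\neq UV\subseteq W$ and $U\cdot V=0$ otherwise need not be associative, so $P$ is not literally a partial semigroup you can take a universal group of). The paper works with the presented group $H=\langle [U],\,U\in P\mid [X]=e,\ [U]^{-1}=[U^{-1}],\ [U][V]=[W]\text{ if }\emptyset\neq UV\subseteq W\rangle$ and argues as follows: since inverses of generators are generators, every element of $H$ is a positive word $[U_1]\cdots[U_n]$; if $U_1\cdots U_n\neq\emptyset$ then $U_1U_2\neq\emptyset$, so $[U_1][U_2]=[W]$ for the unique $W\in P$ containing $U_1U_2$, and $\emptyset\neq U_1\cdots U_n\subseteq WU_3\cdots U_n$, so the word reduces inductively to a single generator $[W']$ with $U_1\cdots U_n\subseteq W'$; from this the paper concludes that $c^{-1}(h)$ equals $U$ if $h=[U]$ for some $U\in P$ and is empty otherwise, whence $c^{-1}(e)=X$. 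If you follow this route, you should still spell out the final step --- why two distinct elements of $P$ cannot represent the same element of $H$, or at least why no $U\neq X$ satisfies $[U]=e$ --- since that is precisely the non-collapse assertion your proposal defers, and it is the one point where the reduction-to-normal-form observation by itself does not yet finish the argument.
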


\begin{proof}
	\eqref{i:iso}$\Rightarrow$\eqref{i:partition}: It is enough to suppose that $\mathcal{G}=H\ltimes_{\theta} X$. For the partition $P=\{\{h\}\times X_{h^{-1}}\}$, it is straightforward to check that $P$ satisfies the conditions in \eqref{i:partition}.
	
	\eqref{i:partition}$\Rightarrow$\eqref{i:cocyle}: Let $P$ be as in \eqref{i:partition} and consider the group $H$ generated by $\{[U]:U\in P\}$ subject to the relations:
	\begin{itemize}
		\item $[X]$ is the identity of $H$,
		\item $[U]^{-1}=[U^{-1}]$,
		\item $[U][V]=[W]$ if $UV$ is non-empty and $UV\scj W$.
	\end{itemize}
	We note that there is a unique $W$ in the last item since $P$ is a partition. For each $\gamma\in\mathcal{G}$, there is a unique $U_\gamma\in P$ such that $\gamma\in U_{\gamma}$. We can define $c:\mathcal{G}\to H$ by $c(\gamma)=[U_{\gamma}]$. For $\gamma\in\mathcal{G}$, we have that $[U_{\gamma^{-1}}]=[U_{\gamma}^{-1}]=[U_\gamma]^{-1}$. Hence $c(\gamma^{-1})=c(\gamma)^{-1}$. And for $(\gamma,\eta)\in\mathcal{G}^{(2)}$, since $\gamma\eta\in U_{\gamma\eta}$, we have that $[U_\gamma][U_\eta]=[U_{\gamma\eta}]$ so that $c(\gamma\eta)=c(\gamma)c(\eta)$. Thus, $c$ is a coclye.
	
	We now check that $c$ is continuous. Using the relations, we see that each element of $H$ is a product $[U_1]\cdots[U_n]$ for some $U_1,\ldots,U_n\in P$, and if we cannot reduce to an element $[W]$, then $U_1\cdots U_n$ is empty as a product of bisections. It follows that
	\[c^{-1}(h)=\begin{cases}
		U, & \text{if }h=[U]\text{ for some }U\in P\\
		\emptyset, &\text{otherwise.}
	\end{cases}\]
	It follows that $c$ is a continuous cocycle such that $c^{-1}(e)=X$.
	
	\eqref{i:cocyle}$\Rightarrow$\eqref{i:iso}: Note that for each $h\in H$, $c^{-1}(h)$ is an open bisection. Indeed, if $\gamma,\eta\in c^{-1}(h)$ are such that $s(\gamma)=s(\eta)$, then $(\gamma,\eta^{-1})\in\mathcal{G}^{(2)}$, and $c(\gamma\eta^{-1})=hh^{-1}=e$. By hypothesis, $\gamma\eta^{-1}\in X$, from where it follows that
	\[\gamma\eta^{-1}=r(\gamma\eta^{-1})=r(\gamma)=\gamma\gamma^{-1},\]
	and thus $\gamma=\eta$. Similarly, we prove that if $\gamma,\eta\in c^{-1}(h)$ and $r(\gamma)=r(\eta)$, then $\gamma=\eta$. Since $\mathcal{G}$ is étale, for each $h\in H$, the set  $X_h=s(c^{-1}(h^{-1}))$ is an open subset of $X$ and the map $\theta_h:X_{h^-1}\to X_h$ given by $\theta(x)=r((s|_{c^{-1}(h)})^{-1}(x))$  is a homeomorphism. 
 	Since $c^{-1}(e)=X$, we have that $X_e=X$ and $\theta_e(x)=x$ for all $x\in X$. Now, for $x\in X$ and $g,h\in H$ such that $x\in X_{h^{-1}}$ and $\theta_h(x)\in X_{g^{-1}}$, there are $\gamma\in c^{-1}(g)$ and $\eta\in c^{-1}(h)$ such that $s(\eta)=x$ and $r(\eta)=\theta_h(x)=s(\gamma)$. Hence $\theta_g(\theta_h(x))=r(\gamma)$. On the other hand, since $s(\gamma\eta)=s(\eta)=x$, we have that $x\in X_{(gh)^{-1}}$ and $\theta_{gh}(x)=r(\gamma\eta)=r(\gamma)=\theta_g(\theta_h(x))$. If follows that $\theta_g\circ\theta_h\leq \theta_{gh}$. Thus, we obtain a partial action $\theta=(\{X_h\}_{h\in H},\{\theta_h:X_{h^{-1}}\to X_h\})$ of $H$ on $X$.
	
	By construction and the above computation, we see that the map $\Phi:\mathcal{G}\to H\ltimes_{\theta} X$ given by $\Phi(\gamma)=(c(\gamma),s(\gamma))$ is a continuous bijective homomorphism with inverse given by $\Phi^{-1}(h,x)=(s|_{c^{-1}(h)})^{-1}(x)$. To see that $\Phi$ is open, note that the family of open bisections $U$ such that $U\scj c^{-1}(h)$ for some $h\in H$ forms a basis for the topology on $\mathcal{G}$. For such bisection $\Phi(U)=\{h\}\times s(U)$, which is open in $H\ltimes_{\theta} X$. Thus, $\Phi$ is an isomorphism of topological groupoids.
\end{proof}

\begin{cor}
    Every discrete groupoid is isomorphic to the transformation groupoid of some partial dynamical system.
\end{cor}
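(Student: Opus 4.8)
The plan is to apply Theorem~\ref{thm:transf.groupoid}. First note that a discrete groupoid $\mathcal{G}$ is automatically \'etale: any map between discrete spaces is a local homeomorphism (every singleton is open and maps homeomorphically onto its image), so the range and source maps of $\mathcal{G}$ qualify. Hence $\mathcal{G}$ is an \'etale groupoid with unit space $X$, and it suffices to produce a partition $P$ of $\mathcal{G}$ satisfying condition~\eqref{i:partition} of that theorem.

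I would take $P=\{X\}\cup\big\{\{\gamma\}:\gamma\in\mathcal{G}\setminus X\big\}$. Since $\mathcal{G}$ carries the discrete topology, $X$ is a clopen bisection and each singleton is trivially an open bisection, so $P$ is a partition of $\mathcal{G}$ into open bisections with $X\in P$. Closure under inverses is immediate, using that an element is a unit if and only if its inverse is. For the last bullet, one checks case by case that for $U,V\in P$ the product $UV$ is either empty, equal to $X$ (only when $U=V=X$), or a singleton $\{\delta\}$; in each case $UV$ is contained in an element of $P$ --- one takes $W=X$ unless $\delta$ is a non-unit, in which case $W=\{\delta\}\in P$. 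Theorem~\ref{thm:transf.groupoid} then yields a group $H$ and a partial action $\theta$ of $H$ on $X$ with $\mathcal{G}\cong H\ltimes_\theta X$.

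There is no real obstacle here; the only point deserving attention is why $X$ must be kept as a single block of $P$ rather than broken into singletons, namely that a product of two non-unit singletons (such as $\{\gamma\}\{\gamma^{-1}\}$) can land in the unit space. If one wants to stay within the standing countability conventions of the paper, observe that a second-countable discrete groupoid is countable, so $P$ is countable and the group $H$ furnished by Theorem~\ref{thm:transf.groupoid} --- generated by $\{[U]:U\in P\}$ --- is countable too.
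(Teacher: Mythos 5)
Your proposal is correct and follows exactly the paper's argument: the same partition $P=\{X\}\cup\{\{\gamma\}:\gamma\in\mathcal{G}\setminus X\}$ fed into condition \eqref{i:partition} of Theorem~\ref{thm:transf.groupoid}, with the verification that the paper leaves implicit spelled out in full. The extra observations (why $X$ must stay as one block, countability of $H$) are accurate but not needed beyond what the paper records.
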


\begin{proof}
    Let $\mathcal{G}$ be a discrete groupoid with unit space $X$ and consider the partition of $\mathcal{G}$ consisting of $X$ and $\{\gamma\}$ for every $\gamma\in \mathcal{G}\setminus X$. Because the groupoid is discrete, the partition $P$ satisfies Theorem \ref{thm:transf.groupoid}(2), from where the result follows.
\end{proof}

Next, we will give a characterization of isomorphisms between transformation groupoids of partial dynamical systems. First, we need a lemma.

\begin{lem}\label{isom pds}
    Let $(\phi,a)$ be an orbit morphism between $G\curvearrowright X$ and $H\curvearrowright Y$. We have that $(\phi,a)$ is an isomorphism in $\PDS$ if and only if $\phi$ is a homeomorphism and for each $x\in X$, the map $a(\cdot,x):G_x\to H_{\phi(x)}$ is a bijection.
\end{lem}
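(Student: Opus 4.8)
The plan is to reduce the whole statement to a question about the transformation groupoids via the fully faithful functor $F\colon\PDS\to\TG$ of Theorem~\ref{thm: functor}. A fully faithful functor reflects isomorphisms, so $(\phi,a)$ is an isomorphism in $\PDS$ if and only if $\Theta:=\Theta_{(\phi,a)}\colon G\ltimes X\to H\ltimes Y$, $\Theta(g,x)=(a(g,x),\phi(x))$, is an isomorphism of topological groupoids, i.e.\ a continuous homomorphism that is also a homeomorphism (the set-theoretic inverse of a bijective groupoid homomorphism is automatically a homomorphism). Thus the task becomes: prove that $\Theta$ is a homeomorphism if and only if $\phi$ is a homeomorphism and each $a(\cdot,x)\colon G_x\to H_{\phi(x)}$ is a bijection. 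Throughout I will use that $G\ltimes X=\bigsqcup_{g\in G}\{g\}\times U_{g^{-1}}$ as a topological space (the topology inherited from $G\times X$ with $G$ discrete), and likewise for $H\ltimes Y$, and that the unit space $(G\ltimes X)^{(0)}=\{(e_G,x):x\in X\}$ is identified with $X$.

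For the forward implication, assume $(\phi,a)$ is an isomorphism in $\PDS$, so $\Theta$ is a topological groupoid isomorphism. Since $a(e_G,x)=e_H$, restricting $\Theta$ to unit spaces gives $\Theta(e_G,x)=(e_H,\phi(x))$, so $\phi$ is (under the identifications above) the restriction of the homeomorphism $\Theta$ to unit spaces, hence a homeomorphism $X\to Y$. Now fix $x\in X$. As $s(\Theta(g,x))=\phi(x)$, the map $\Theta$ carries the source fibre $s^{-1}(x)=G_x\times\{x\}$ into $s^{-1}(\phi(x))=H_{\phi(x)}\times\{\phi(x)\}$; this restriction is injective because $\Theta$ is, and surjective because, given $(h,\phi(x))$ in the target, surjectivity of $\Theta$ produces $(g,x')$ with $\phi(x')=\phi(x)$, whence $x'=x$ by injectivity of $\phi$. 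Under the identifications $s^{-1}(x)\cong G_x$ and $s^{-1}(\phi(x))\cong H_{\phi(x)}$ this bijection is precisely $a(\cdot,x)$.

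For the converse, assume $\phi$ is a homeomorphism and every $a(\cdot,x)$ is a bijection. Injectivity of $\Theta$ is immediate: $\Theta(g_1,x_1)=\Theta(g_2,x_2)$ forces $\phi(x_1)=\phi(x_2)$, hence $x_1=x_2=:x$, and then $a(g_1,x)=a(g_2,x)$ forces $g_1=g_2$. For surjectivity, given $(h,y)\in H\ltimes Y$ write $y=\phi(x)$; since $h\in H_y=H_{\phi(x)}$ and $a(\cdot,x)\colon G_x\to H_{\phi(x)}$ is onto, there is $g\in G_x$ with $a(g,x)=h$, so $\Theta(g,x)=(h,y)$. To see that $\Theta$ is open, note that for each fixed $g$ the component $a(g,\cdot)\colon U_{g^{-1}}\to H$ is continuous into the discrete group $H$, hence locally constant; writing the clopen piece $\{g\}\times U_{g^{-1}}$ as the disjoint union over $h\in H$ of the clopen sets $\{g\}\times a(g,\cdot)^{-1}(h)$, one gets that $\Theta$ sends $\{g\}\times W$, for $W\subseteq a(g,\cdot)^{-1}(h)$ open, onto $\{h\}\times\phi(W)$, which is open in $H\ltimes Y$ because $\phi$ is an open map and $\phi(W)\subseteq V_{h^{-1}}$ by Condition~\eqref{om preserves action} of Definition~\ref{def orbit morphism}. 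Since such sets $\{g\}\times W$ form a basis for the topology of $G\ltimes X$, $\Theta$ is open, hence a homeomorphism, hence an isomorphism in $\TG$; by full faithfulness of $F$ it follows that $(\phi,a)$ is an isomorphism in $\PDS$.

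The step I expect to be the crux is the openness of $\Theta$ in the converse direction: bijectivity of $\Theta$ does not by itself yield an isomorphism of topological groupoids, and openness must be extracted from the special product structure of transformation groupoids — concretely, the fact that a continuous cocycle into a discrete group is locally constant in the space variable — together with the openness of $\phi$. Everything else is either formal (transport through $F$) or a routine set-theoretic verification.
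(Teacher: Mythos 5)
Your argument is correct, but it follows a genuinely different route from the paper's. The paper stays entirely inside $\PDS$: it writes down the candidate inverse explicitly, $b(h,y)=a(\cdot,\phi^{-1}(y))^{-1}(h)$, proves that $b$ is continuous (via local constancy of $a$ in the space variable), checks conditions \eqref{om preserves action} and \eqref{om is a cocyle} of Definition~\ref{def orbit morphism} for $(\phi^{-1},b)$, and verifies that it inverts $(\phi,a)$. You instead push everything through the fully faithful functor $F$ of Theorem~\ref{thm: functor}, using that fully faithful functors reflect isomorphisms, and reduce the lemma to the statement that $\Theta_{(\phi,a)}$ is a homeomorphism iff $\phi$ is a homeomorphism and each $a(\cdot,x)$ is bijective; this is legitimate since Theorem~\ref{thm: functor} precedes the lemma. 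The technical core is the same in both treatments -- a continuous cocycle into a discrete group is locally constant in the space variable, and this combines with openness of $\phi$ -- but it surfaces in your proof as openness of $\Theta$ and in the paper's as continuity of $b$. What the paper's version buys is an explicit formula for the inverse cocycle $b$, which is reused later (for instance in the proof of Theorem~\ref{isom-equivalences}, where $(\phi^{-1},b)$ appears by name); your version is conceptually leaner and makes the equivalence (1)$\iff$(2) of Theorem~\ref{isom-equivalences} essentially tautological, at the cost of leaving the inverse morphism implicit (it can of course be recovered as $(\phi_{\Theta^{-1}},a_{\Theta^{-1}})$ from the proof of Theorem~\ref{thm: functor}). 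All the individual steps you flag as routine -- that a bijective groupoid homomorphism has a homomorphism for its set-theoretic inverse, that $\Theta$ carries unit space onto unit space and source fibres onto source fibres, and that the sets $\{g\}\times W$ with $W$ open and $a(g,\cdot)$ constant on $W$ form a basis -- do check out.
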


\begin{proof}
    Suppose first that $(\phi,a)$ is an isomorphism with inverse $(\psi,b)$. By the definition of composition between orbit morphisms, we see that $\psi=\phi^{-1}$ and that $b(\cdot,\phi(x))$ is the inverse of $a(\cdot,x)$ for each $x\in X$.

    Now, suppose that $\phi$ is a homeomorphism and that $a(\cdot,x):G_x\to H_{\phi(x)}$ is bijective for each $x\in X$. We define the map $b:\bigcup_{h\in H}\{h\}\times V_{h^{-1}}$ by $b(h,y)=a(\cdot,\phi^{-1}(y))^{-1}(h)$. In other words, $b(h,y)$ is the unique element in $G_{\phi^{-1}(y)}$ such that $a(b(h,y),\phi^{-1}(y))=h$. The next goal is to prove that $(\phi^{-1},b)$ is an inverse of $(\phi,a)$ in $\PDS$. We divide in a few steps.

    We begin by showing that $b$ is continuous, which is equivalent to $b$ being locally constant. Fix $(h,y)\in \bigcup_{h\in H}\{h\}\times V_{h^{-1}}$. Since $a$ is continuous, there exists an open neighbourhood $U\scj U_{b(h,y)^{-1}}$ of $\phi^{-1}(y)$ such that $a(b(h,y),x)=h$ for all $x\in U$. Let $V=\phi(U)$, which is an open neighbourhood of $y$ since $\phi$ is a homeomorphism. By \eqref{om preserves action} of Definition~\ref{def orbit morphism}, we have that $V\scj V_{h^{-1}}$. Moreover, for $y'\in V$, we have that $b(h,y')=h$ since $\phi^{-1}(y')\in U$. Hence, $b$ is constant in $\{h\}\times V$. It follows that $b$ is continuous.

    Now, we show that $(\phi^{-1},b)$ satisfies condition \eqref{om preserves action} of Definition~\ref{def orbit morphism}. Let $(h,y)\in \bigcup_{h\in H}\{h\}\times V_{h^{-1}}$. By construction, $\phi^{-1}(y)\in U_{b(h,y)^{-1}}$. Moreover, because $(\phi,a)$ is an orbit morphism, we obtain
    \[\phi(b(h,y).\phi^{-1}(y))=a(b(h,y),\phi^{-1}(y)).\phi(\phi^{-1}(y))=h.y.\]
    Hence $\phi^{-1}(h.y)=b(h,y).\phi^{-1}(y)$.

    Next, we show that $(\phi^{-1},b)$ satisfies condition \eqref{om is a cocyle} of Definition~\ref{def orbit morphism}. Let $h_1,h_2\in H$ and $y\in V_{h_2^{-1}}\cap V_{(h_1h_2)^{-1}}$. Consider $g:=b(h_1h_2,y)$, $g_1:=b(h_1,h_2.y)$ and $g_2:=b(h_2,y)$. We need to prove that $g=g_1g_2$. By the construction of $b$, we have that
    \begin{align*}
        a(g,\phi^{-1}(y))&=h_1h_2\\
        &=a(g_1,\phi^{-1}(h_2.y))a(g_2,\phi^{-1}(y))\\
        &=a(g_1,b(h_2,y).\phi^{-1}(y))a(g_2,\phi^{-1}(y))\\
        &=a(g_1,g_2.\phi^{-1}(y))a(g_2,\phi^{-1}(y))\\
        &=a(g_1g_2,\phi^{-1}(y)).
    \end{align*}
    Since $a(\cdot,\phi^{-1})$ is a bijection, we have that $g=g_1g_2$.

    Finally, we prove that $(\phi^{-1},b)$ is the inverse of $(\phi,a)$ in $\PDS$. By construction $(\phi,a)\circ(\phi^{-1},b)=id_{H\curvearrowright Y}$. On the other hand, for $g\in G$ and $x\in U_{g^{-1}}$, $b(a(g,x),\phi(x))=a(\cdot,x)^{-1}(a(g,x))=g$. Hence $(\phi^{-1},b)\circ(\phi,a)=id_{G\curvearrowright X}$.
\end{proof}

 Given partial dynamical systems $G \curvearrowright X$ and $H \curvearrowright Y$, we recall that $G \curvearrowright X$ and $H \curvearrowright Y$ are {\it continuously orbit equivalent} (\cite[Definition 2.6]{Li2017}) if there exist a homeomorphism $\phi: X \to Y$ and  continuous maps $a: \bigcup_{g \in G} \{g\}\times U_{g^{-1}} \to H$, $b: \bigcup_{h \in H} \{h\} \times V_{h^{-1}} \to G$  such that 
\begin{align} 
\phi(g.x)&=a(g,x).\phi(x), \\
\phi^{-1}(h.y)&=b(h,y). \phi^{-1}(y).
\end{align}
We call $(\phi,a,b)$  a continuous orbit equivalence.

\begin{dfn} Let   $G \curvearrowright X$ and $H \curvearrowright Y$ be partial dynamical systems. Consider a map $a: \bigcup_{g \in G} \{g\}\times U_{g^{-1}} \to H$ and let   $\phi: X \to Y$ be a homeomorphism.
 We say that $(\phi,a)$ {\it preserves stabilisers} if 
$a(\cdot, x)$ restricts to bijections $ \operatorname{Stab}(x) \to \operatorname{Stab}(\phi(x)) $, and that $(\phi,a)$ {\it preserves essential stabilisers} if $a(\cdot, x)$ restricts to bijections $ \operatorname{Stab}^{ess}(x) \to \operatorname{Stab}^{ess}(\phi(x)) $.
\end{dfn}

We remark that $(\phi,a)$ above does not need to be an orbit morphism.

\begin{thm}\label{isom-equivalences}
    Let $G \curvearrowright X$ and $H \curvearrowright Y$ be partial dynamical systems. The following are equivalent:
    \begin{enumerate}
        \item $G \curvearrowright X$ and $H \curvearrowright Y$ are isomorphic in $\PDS$.
        \item $G \ltimes X$ and $H\ltimes Y$ are isomorphic as topological groupoids.
        \item There exists a continuous orbit equivalence $(\phi,a,b)$ such that $a$ is a cocycle and $(\phi,a)$ preserves stabilisers.
        \item There exists a continuous orbit equivalence $(\phi,a,b)$ such that $a$ is a cocycle and $(\phi,a)$ preserves essential stabilisers.
    \end{enumerate}
\end{thm}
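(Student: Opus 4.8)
The plan is to prove the chain $(1)\Leftrightarrow(2)$ first, since this is essentially immediate from the functor machinery, and then to close a cycle $(1)\Rightarrow(3)\Rightarrow(4)\Rightarrow(1)$ (or, more symmetrically, $(1)\Rightarrow(3)$, $(1)\Rightarrow(4)$, and then $(3)\Rightarrow(1)$ and $(4)\Rightarrow(1)$).

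For $(1)\Leftrightarrow(2)$: since $F:\PDS\to\TG$ is fully faithful (Theorem~\ref{thm: functor}), it reflects and preserves isomorphisms; thus $G\curvearrowright X$ and $H\curvearrowright Y$ are isomorphic in $\PDS$ if and only if $F(G\curvearrowright X)=G\ltimes X$ and $F(H\curvearrowright Y)=H\ltimes Y$ are isomorphic in $\TG$, which is exactly $(2)$. (One should note that an isomorphism of topological groupoids in the sense of $\TG$ — a continuous functor with continuous inverse — is the same as an isomorphism of topological groupoids; this is harmless.)

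For $(1)\Rightarrow(3)$ and $(1)\Rightarrow(4)$: suppose $(\phi,a):G\curvearrowright X\to H\curvearrowright Y$ is an isomorphism in $\PDS$. By Lemma~\ref{isom pds}, $\phi$ is a homeomorphism, and for each $x$ the map $a(\cdot,x):G_x\to H_{\phi(x)}$ is a bijection; let $(\phi^{-1},b)$ be the inverse, so $b(\cdot,y):H_y\to G_{\phi^{-1}(y)}$ is the inverse bijection. Then $a$ is a cocycle by definition of orbit morphism, condition~\eqref{om preserves action} gives $\phi(g.x)=a(g,x).\phi(x)$ and $\phi^{-1}(h.y)=b(h,y).\phi^{-1}(y)$, so $(\phi,a,b)$ is a continuous orbit equivalence in the sense of Li. It remains to check that $(\phi,a)$ preserves (essential) stabilisers, i.e. that $a(\cdot,x)$ carries $\operatorname{Stab}(x)$ bijectively onto $\operatorname{Stab}(\phi(x))$ and $\operatorname{Stab}^{ess}(x)$ bijectively onto $\operatorname{Stab}^{ess}(\phi(x))$. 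For $\operatorname{Stab}$: if $g\in\operatorname{Stab}(x)$ then $g.x=x$, so $a(g,x).\phi(x)=\phi(g.x)=\phi(x)$, i.e. $a(g,x)\in\operatorname{Stab}(\phi(x))$; the reverse inclusion follows symmetrically from $b$, and injectivity is inherited from $a(\cdot,x)$ being a bijection on $G_x\supseteq\operatorname{Stab}(x)$. For $\operatorname{Stab}^{ess}$: recall $\operatorname{Iso}(G\ltimes X)^{\circ}=\bigcup_x\operatorname{Stab}^{ess}(x)\times\{x\}$, and the groupoid isomorphism $\Theta_{(\phi,a)}$ restricts to a homeomorphism $\operatorname{Iso}(G\ltimes X)\to\operatorname{Iso}(H\ltimes Y)$ carrying interiors to interiors; unwinding $\Theta_{(\phi,a)}(g,x)=(a(g,x),\phi(x))$ at the level of interiors of the isotropy gives $g\in\operatorname{Stab}^{ess}(x)\iff a(g,x)\in\operatorname{Stab}^{ess}(\phi(x))$. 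This yields both $(3)$ and $(4)$.

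For $(3)\Rightarrow(1)$ and $(4)\Rightarrow(1)$: suppose $(\phi,a,b)$ is a continuous orbit equivalence with $a$ a cocycle and $(\phi,a)$ preserving stabilisers (resp. essential stabilisers). Since $a$ is a cocycle satisfying \eqref{om preserves action}, $(\phi,a)$ is already an orbit morphism; by Lemma~\ref{isom pds} it suffices to show $a(\cdot,x):G_x\to H_{\phi(x)}$ is a bijection for each $x$. Surjectivity: $a(\cdot,x)$ and $b(\cdot,\phi(x))$ are mutually inverse "up to the cocycle identity" — more precisely, for $g\in G_x$ one checks $b(a(g,x),\phi(x))\in\operatorname{Stab}(x)\cdot g$ using the two cocycle-type relations and $\phi\circ\phi^{-1}=\mathrm{id}$, so $a(\cdot,x)$ is surjective with a right inverse built from $b$ twisted by an element of $\operatorname{Stab}(\phi(x))$; injectivity follows because $a(g,x)=a(g',x)$ forces $g^{-1}g'\in\operatorname{Stab}(x)$ (again from \eqref{om preserves action}), and the stabiliser-preservation hypothesis pins down the size of the fibers. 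The cleanest route is probably to run the argument at the groupoid level: $(\phi,a,b)$ plus the cocycle condition gives that $\Theta_{(\phi,a)}:G\ltimes X\to H\ltimes Y$ is a continuous homomorphism which is surjective onto each range/source fibre and is a homeomorphism on unit spaces; the stabiliser-preservation hypothesis then says $\Theta_{(\phi,a)}$ restricts to a bijection on each isotropy group $\operatorname{Stab}(x)\times\{x\}\to\operatorname{Stab}(\phi(x))\times\{\phi(x)\}$ (resp. on the interiors of the isotropy), and a standard diagram-chase — a groupoid homomorphism that is a bijection on units, surjective on fibres, and injective on isotropy is an isomorphism — shows $\Theta_{(\phi,a)}$ is an isomorphism of topological groupoids, hence $(2)$, hence $(1)$ by the already-established equivalence.

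\textbf{Main obstacle.} The routine parts are the cocycle bookkeeping in $(1)\Rightarrow(3),(4)$. The genuinely delicate point is the reverse direction $(3)\Rightarrow(1)$ and especially $(4)\Rightarrow(1)$: knowing only that $a(\cdot,x)$ preserves the (essential) stabiliser and that there exists \emph{some} $b$ making $(\phi,a,b)$ a continuous orbit equivalence, one must promote this to bijectivity of $a(\cdot,x):G_x\to H_{\phi(x)}$ on the \emph{whole} of $G_x$. The subtlety is that $b$ is not assumed to be a cocycle, so $b(\cdot,\phi(x))$ need not be a homomorphism and need not be the honest inverse of $a(\cdot,x)$ — it is only an inverse up to a stabiliser-valued correction. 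Controlling this correction is where the stabiliser-preservation hypothesis must be used, and handling the essential (interior-of-isotropy) case requires the open-bisection/étale structure so that the "neighbourhood" quantifier in $\operatorname{Stab}^{ess}$ transfers along $\phi$; I expect this to be the heart of the proof.
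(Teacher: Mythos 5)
Your proposal follows essentially the same route as the paper: $(1)\Leftrightarrow(2)$ from full faithfulness of $F$, $(1)\Rightarrow(3),(4)$ by transporting (essential) stabilisers along the isomorphism and its inverse, and $(3),(4)\Rightarrow(1)$ by upgrading $a(\cdot,x)$ to a bijection $G_x\to H_{\phi(x)}$ via the cocycle identity, using stabiliser-preservation to absorb the defect of $b$ from being an honest inverse — which is exactly the paper's argument (routed there as $(1)\Rightarrow(3)\Rightarrow(4)\Rightarrow(1)$). One caution: your ``cleanest route'' reformulation asserts that $\Theta_{(\phi,a)}$ is surjective onto source fibres as a consequence of the cocycle condition alone, but that surjectivity \emph{is} the content of the hard step and requires the stabiliser correction (and, in case (4), the continuity-of-$a$ neighbourhood argument placing the correcting element in $\operatorname{Stab}^{ess}$ rather than merely $\operatorname{Stab}$) that you correctly identify earlier in the same paragraph.
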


\begin{proof} (1)$\iff$(2): The result follows from the fact that the functor $F$ of Theorem~\ref{thm: functor} is fully faithful.

    (1)$\implies$(3): Let $(\phi,a)$ be an isomorphism between $G \curvearrowright X$ and $H \curvearrowright Y$ with inverse given by $(\phi^{-1},b)$. Clearly, $(\phi,a,b)$ is a continuous orbit equivalence such that $a$ is cocycle.  To prove that $(\phi,a)$ preserves stabilisers, fix $x \in X$. If $a(g_1,x)=a(g_2,x)$ for $g_1, g_2 \in \operatorname{Stab}(x)$, then $$g_1=i(g_1,x)=b(a(g_1,x), \phi(x))=b(a(g_2,x), \phi(x))=i(g_2,x)=g_2$$
since $(\phi^{-1},b) \circ (\phi,a)=(id_X, i)$. Thus, $a(\cdot, x)|_{\operatorname{Stab}(x)}$ is injective.

To show that $a(\cdot, x)|_{\operatorname{Stab}(x)}$ is surjective,
choose $h \in \operatorname{Stab}(\phi(x))$. Put $y=\phi(x)$. Then, 
$$b(h, y).x=b(h, y).\phi^{-1}(y)=\phi^{-1}(h.y)=\phi^{-1}(h.\phi(x))=\phi^{-1}(\phi(x))=x.$$ Thus, $b(h,y) \in \operatorname{Stab}(x)$.
Since $(\phi,a) \circ (\phi^{-1},b) =(id_Y,i)$, it follows that 
$$a(b(h,y),x)=a(b(h,y), \phi^{-1}(y))=i(h,y)=h.$$
 Hence, 
$a(\cdot, x)|_{\operatorname{Stab}(x)}$ is surjective. 

(3)$\implies$(4): It is enough to show that $g \in \operatorname{Stab}^{ess}(x)$ if and only if $a(g,x) \in \operatorname{Stab}^{ess}(\varphi(x))$. First, let $g \in \operatorname{Stab}^{ess}(x)$. Then, there is an open neighborhood $U$ of $x$ such that $g \in \operatorname{Stab}(x')$ for all $x'\in U$. So, $U \subseteq U_{g^{-1}}$ and $g.x'=x'$ for all $x'\in U$. Since $a$ is continuous and $G$ is discrete, one may assume that $a(g,x)=a(g,x')$ for all $x' \in U$. Then, $U':=\phi(U)$ is an open neighborhood of $\phi(x)$ such that for $y \in U'$,
$$ a(g,x).y=a(g,\phi^{-1}(y)).\phi(\phi^{-1}(y))=\phi(g.\phi^{-1}(y))=\phi(\phi^{-1}(y))=y.$$
Thus, $a(g,x)  \in \operatorname{Stab}^{ess}(\phi(x))$.

Now, suppose that $a(g,x) \in \operatorname{Stab}^{ess}(\phi(x))$. Then, there is an open neighborhood $U'$ of $\phi(x)$ such that $a(g,x).y=y$ for all $y \in U'$. Since $a$ and $\phi$ are continuous, there is an open neighborhood $U$ of $x$ such that $\phi(U) \subseteq U'$ and $a(g,x')=a(g,x)$ for all $x' \in U$. Thus, for all $x' \in U$, 
$$\phi(g.x')=a(g,x').\phi(x') =a(g,x).\phi(x')=\phi(x'),$$
and hence, $g.x'=x'$ for all $x' \in U$. Hence, $g \in \operatorname{Stab}^{ess}(x)$.

(4)$\implies$(1): We show that $(\phi,a)$ is an isomorphism between $G \curvearrowright X$ and $H \curvearrowright Y$ (although the inverse is not necessarily $(\phi^{-1},b)$). By Lemma~\ref{isom pds}, it is sufficient to show that for each $x\in X$, the map $a(\cdot,x):G_x\to H_{\phi(x)}$ is a bijection.

Let $g_1,g_2\in G$ and $x\in U_{g_1^{-1}}\cap U_{g_2^{-1}}$ be such that $a(g_1,x)=a(g_2,x)=:h$. Since $a$ is continuous, there exists an open neighborhood $U\scj U_{g_1^{-1}}\cap U_{g_2^{-1}}$ of $x$ such that for all $x'\in U$, we have that $a(g_1,x')=h=a(g_2,x')$. In particular
\[\phi(g_1.x')=a(g_1,x').\phi(x')=a(g_2,x').\phi(x')=\phi(g_2.x'),\]
from where it follows that $g_2^{-1}g_1\in \operatorname{Stab}^{ess}(x)$. Since $a$ is a cocyle, we then have
\begin{align*}a(g_2^{-1}g_1, x)&= a(g_2^{-1}, g_1.x)a(g_1,x)\\
&=a(g_2^{-1}, g_2.x)a(g_1,x)\\
&=a(g_2^{-1}, g_2.x)a(g_2,x)\\
&=a(g_2^{-1} g_2, x)\\
&=a(e_G, x).
\end{align*}
By hypothesis, $a(\cdot,x)$ restricts to a bijection between $\operatorname{Stab}^{ess}(x)$ and $\operatorname{Stab}^{ess}(\phi(x))$. It then follows that $g_2^{-1}g_1=e_G$, that is, $g_1=g_2$.

Now let $x\in X$ and $h\in H_{\phi(x)}$. Because $a$ and $b$ are continuous, and $\phi$ is a homeomorphism, we can find an open neighborhood $U$ of $x$ such that $U\scj U_{b(h,\phi(x))^{-1}}$, $\phi(U)\scj V_{h^{-1}}$ and $a(b(h,\phi(x')),x')=a(b(h,\phi(x)),x)$ for all $x'\in U$. Then, for $x'\in U$, we have that
\begin{align*}a(b(h,\phi(x)), x).\phi(x')&=a(b(h,\phi(x')), x').\phi(x')\\
&=\phi(b(h,\phi(x')).x') \\
&=\phi(b(h,\phi(x')).\phi^{-1}(\phi(x'))) \\
&=\phi(\phi^{-1}(h.\phi(x'))) \\
&=h.\phi(x').
\end{align*}
Thus, $a(b(h,\phi(x)), x)^{-1}h\in \operatorname{Stab}^{ess}(\phi(x))$. By hypothesis, there exists $g'\in \operatorname{Stab}^{ess}(x)$ such that $a(g',x)=a(b(h,\phi(x)), x)^{-1}h$. For $g=b(h,\phi(x))g'$, we then have that $x\in U_{g^{-1}}$ and
\begin{align*}
    a(g,x)&=a(b(h,\phi(x))g',x)\\
    &=a(b(h,\phi(x)),g'.x)a(g',x)\\
    &=a(b(h,\phi(x)),x)a(b(h,\phi(x)), x)^{-1}h\\
    &=h.
\end{align*}
Thus $a(\cdot,x)$ is a bijection. It then follows from Lemma~\ref{isom pds} that $(\phi,a)$ is an isomorphism in $\PDS$.
\end{proof}

For topologically free partial dynamical systems, the essential stabalizers groups are all trivial. Also, if $(\phi,a,b)$ is a continuous orbit equivalence between $G \curvearrowright X$ and $H \curvearrowright Y$, then $a$ is a cocyle by a simple generalization of \cite[Lemma~2.8]{Li2018}. The following it then a generalization of \cite[Theorem 1.2]{Li2017} and \cite[Corollary~7.5]{CRST}.

\begin{prop}\label{prop:isom-equivalences}
    Let $G \curvearrowright X$ and $H \curvearrowright Y$ be partial dynamical systems.  Suppose that $\operatorname{Stab}^{ess}(x)$ and  $\operatorname{Stab}^{ess}(y)$ are torsion-free and abelian for all $x \in X$ and all $y \in Y$. Then, each one of the items (1)-(4) of Theorem~\ref{isom-equivalences} is also equivalent to:
    \begin{enumerate}
        \setcounter{enumi}{4}
        \item There is an isomorphism $\Phi: C_0(X)\rtimes_r G \to C_0(Y)\rtimes_r H $ such that $\Phi(C_0(X))=C_0(Y)$.
    \end{enumerate}
\end{prop}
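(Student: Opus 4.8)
The strategy is to reduce the equivalence ``(2) $\iff$ (5)'' to a known groupoid-theoretic statement, namely \cite[Theorem~6.2]{CRST}, which characterizes when two étale groupoids with torsion-free abelian interior isotropy have diagonal-preserving isomorphic reduced $C^*$-algebras in terms of the groupoids being isomorphic. All the remaining equivalences with (5) then follow from Theorem~\ref{isom-equivalences}. First I would record the standing identifications: for a partial dynamical system $G\curvearrowright X$, the transformation groupoid $G\ltimes X$ is a locally compact Hausdorff étale (indeed amenable, since $G$ is discrete and amenability is not actually needed for the reduced-algebra statement, but the groupoids here are second countable) groupoid whose unit space is $X$, and the reduced crossed product $C_0(X)\rtimes_r G$ is canonically isomorphic to the reduced groupoid $C^*$-algebra $C^*_r(G\ltimes X)$ via an isomorphism carrying $C_0(X)$ onto $C_0((G\ltimes X)^{(0)})=C_0(X)$; this is the standard identification of partial crossed products with groupoid algebras (see \cite{ExelBook}). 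Under this identification a diagonal-preserving isomorphism $\Phi\colon C_0(X)\rtimes_r G\to C_0(Y)\rtimes_r H$ is exactly a $C^*$-isomorphism $C^*_r(G\ltimes X)\to C^*_r(H\ltimes Y)$ carrying $C_0(X)$ onto $C_0(Y)$.

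Next I would verify that the hypotheses of \cite[Theorem~6.2]{CRST} are met. By Lemma~\ref{partial system:torsion free and abelian}, the assumption that $\operatorname{Stab}^{ess}(x)$ is torsion-free and abelian for all $x\in X$ is equivalent to $\operatorname{Iso}(G\ltimes X)^{\circ}$ being torsion-free and abelian, and likewise for $H\curvearrowright Y$; recall from the preliminaries that $\operatorname{Iso}(G\ltimes X)^{\circ}=\bigcup_{x\in X}\operatorname{Stab}^{ess}(x)\times\{x\}$. Thus both $G\ltimes X$ and $H\ltimes Y$ are second-countable locally compact Hausdorff étale groupoids with torsion-free abelian isotropy interior, which is precisely the setting of \cite[Theorem~6.2]{CRST}. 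That theorem then gives: $C^*_r(G\ltimes X)\cong C^*_r(H\ltimes Y)$ via a $C_0$-preserving isomorphism if and only if $G\ltimes X\cong H\ltimes Y$ as topological groupoids. Combining with the crossed-product identification from the previous paragraph, this is exactly ``(5) $\iff$ (2)''. Since Theorem~\ref{isom-equivalences} already establishes (1) $\iff$ (2) $\iff$ (3) $\iff$ (4), item (5) is equivalent to each of (1)--(4).

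\textbf{Where the difficulty lies.} There are really no hard new ideas here; the proof is an application of an external black box once the translation between partial crossed products and transformation-groupoid $C^*$-algebras is in place. The one point requiring a little care is to make sure \cite[Theorem~6.2]{CRST} is quoted in a form that does not secretly require more than we have — in particular it should be the version that needs only the \emph{essential} (interior) isotropy to be torsion-free abelian, not the full isotropy, since $\operatorname{Stab}(x)$ itself need not be abelian or torsion-free. (If the cited result is stated for general étale groupoids of this type this is automatic; if it is stated under a topological-freeness-type hypothesis one would instead invoke the appropriate generalization, which is the content of \cite[Theorem~6.2]{CRST} as used in \cite[Corollary~7.5]{CRST}.) A second minor point is checking that the canonical $*$-isomorphism $C_0(X)\rtimes_r G\cong C^*_r(G\ltimes X)$ genuinely identifies the two copies of $C_0(X)$ as \emph{Cartan} (or at least diagonal) subalgebras in the sense demanded by \cite[Theorem~6.2]{CRST}; this is standard but should be cited precisely. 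Modulo these bookkeeping matters the proof is short.

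\begin{proof}
    For a partial dynamical system $G\curvearrowright X$, the transformation groupoid $G\ltimes X$ is a second-countable locally compact Hausdorff étale groupoid with unit space $X$, and there is a canonical $*$-isomorphism $C_0(X)\rtimes_r G\cong C^*_r(G\ltimes X)$ that restricts to the identity on $C_0(X)=C_0\big((G\ltimes X)^{(0)}\big)$ (see \cite{ExelBook}). Hence an isomorphism $\Phi\colon C_0(X)\rtimes_r G\to C_0(Y)\rtimes_r H$ with $\Phi(C_0(X))=C_0(Y)$ is the same thing as a $*$-isomorphism $C^*_r(G\ltimes X)\to C^*_r(H\ltimes Y)$ carrying $C_0(X)$ onto $C_0(Y)$.

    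By Lemma~\ref{partial system:torsion free and abelian}, the hypothesis that $\operatorname{Stab}^{ess}(x)$ is torsion-free and abelian for all $x\in X$ is equivalent to $\operatorname{Iso}(G\ltimes X)^{\circ}$ being torsion-free and abelian, and similarly $\operatorname{Iso}(H\ltimes Y)^{\circ}$ is torsion-free and abelian. Thus $G\ltimes X$ and $H\ltimes Y$ satisfy the hypotheses of \cite[Theorem~6.2]{CRST}, which yields that $C^*_r(G\ltimes X)$ and $C^*_r(H\ltimes Y)$ admit a $C_0$-preserving isomorphism if and only if $G\ltimes X$ and $H\ltimes Y$ are isomorphic as topological groupoids. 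Combining with the previous paragraph, this proves (5)$\iff$(2). Since Theorem~\ref{isom-equivalences} gives (1)$\iff$(2)$\iff$(3)$\iff$(4), item (5) is equivalent to each of (1)--(4).
\end{proof}
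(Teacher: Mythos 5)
Your proof is correct and follows essentially the same route as the paper, which likewise deduces (2)$\iff$(5) from Lemma~\ref{partial system:torsion free and abelian} together with \cite[Theorem~6.2]{CRST} (applied to the transformation groupoids with trivial grading) and appeals to Theorem~\ref{isom-equivalences} for the rest. Your write-up merely makes explicit the standard identification $C_0(X)\rtimes_r G\cong C^*_r(G\ltimes X)$, which the paper leaves implicit.
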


\begin{proof}
    Consider  $G \ltimes X $ and $ H \ltimes Y$ equipped with trivial grading. Then, the equivalence between (2) and (5) follows from Lemma \ref{partial system:torsion free and abelian} and \cite[Theorem 6.2]{CRST}.
\end{proof}

\section{Partial actions of the free group}\label{Sec 4}

Let $\mathbb{F}$ be a free group generated by a set $\CA$ and, for each $g\in\mathbb{F}$, denote by $|g|$ the length of the reduced form of $g$. Following \cite[Definition~4.9]{ExelBook} we say that a partial dynamical system $\mathbb{F}\curvearrowright X$ is \emph{semi-saturated} if $\varphi_{gh}=\varphi_g\circ\varphi_h$ for all $g,h\in\mathbb{F}$ such that $|g+h|=|g|+|h|$. By \cite[Proposition~4.10]{ExelBook}, a semi-saturated partial dynamical system is completely defined by the homeomorphisms $\{\varphi_\alpha:U_{\alpha^{-1}}\to U_\alpha\}_{\alpha\in\CA}$. Note that, in this case, for $\alpha,\beta\in\mathbb{F}_+$, we have that $U_{\alpha\beta}\scj U_{\alpha}$. Moreover, if $g\in\mathbb{F}$ cannot be written as $g=\alpha\beta^{-1}$ for $\alpha,\beta\in\mathbb{F}_+$, then $U_g=\emptyset$.

We say that a semi-saturated partial dynamical system is \emph{orthogonal} if $U_\alpha\cap U_\beta=\emptyset$ for every $\alpha,\beta\in\CA$ such that $\af\neq \beta$. In this case, we can define a Deaconu-Renault system by $U:=\bigsqcup_{\alpha\in\CA}U_\alpha$ and $\sigma:U\to X$ is given by $\sigma(x)=\phi_{\alpha^{-1}}(x)$ if $x\in U_\alpha$. Note that for every $n\in\mathbb{N}\setminus\{0\}$, $\operatorname{dom}(\sigma^n)=\bigsqcup_{\alpha\in\mathbb{F}_+,|\alpha|=n}U_{\alpha}$ and $\sigma^n(x)=\varphi_{\alpha^{-1}}$ if $x\in U_\alpha$.
Here, we use both $(X, \sigma)$ and $\sigma: U \to X$ interchangeably to represent the Deaconu-Renault system.

The following theorem generalizes \cite[Theorem 5.5]{CastroWyk} and \cite[Theorem 4.4]{CasK2}.

\begin{thm}\label{thm: from PA to DRS}
    Let $\mathbb{F}\curvearrowright X$ be an orthogonal semi-saturated partial dynamical system, and $\sigma: U \to X$ the corresponding Deaconu-Renault system. There is an isomorphism of topological groupoids $\Xi:\mathbb{F}\ltimes X\to \Gamma(X,\sigma)$ such that $\Xi(\alpha\beta^{-1},x)=(\varphi_{\alpha\beta^{-1}}(x),|\alpha|-|\beta|,x)$ for every $\alpha,\beta\in\mathbb{F}_+$ such that $\alpha\beta^{-1}$ is in reduced form and $x\in U_{\beta\alpha^{-1}}$.
\end{thm}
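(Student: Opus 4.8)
The plan is to write down $\Xi$ and its inverse explicitly and then verify, in turn, that $\Xi$ is well defined into $\Gamma(X,\sigma)$, that it is a bijection, that it is a groupoid homomorphism, and that it is a homeomorphism. I would begin by recording the normal form for group elements with nonempty domain: as noted before the statement, $U_g\neq\emptyset$ forces $g=\alpha\beta^{-1}$ for some $\alpha,\beta\in\mathbb{F}_+$, and after cancelling a common terminal letter we may take this expression reduced, in which case it is the unique such expression. For $(g,x)\in\mathbb{F}\ltimes X$ we have $x\in U_{g^{-1}}\neq\emptyset$, so $g=\alpha\beta^{-1}$ in reduced form, and I set $\Xi(g,x)=(\varphi_{\alpha\beta^{-1}}(x),|\alpha|-|\beta|,x)$. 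To see this lies in $\Gamma(X,\sigma)$: semi-saturation gives $\varphi_{\alpha\beta^{-1}}=\varphi_\alpha\circ\varphi_{\beta^{-1}}$ and $\varphi_{\beta\alpha^{-1}}=\varphi_\beta\circ\varphi_{\alpha^{-1}}$ (the lengths add because the words are reduced), so $U_{g^{-1}}=U_{\beta\alpha^{-1}}\subseteq U_\beta\subseteq\operatorname{dom}(\sigma^{|\beta|})$ with $\sigma^{|\beta|}(x)=\varphi_{\beta^{-1}}(x)$, and likewise $\varphi_{\alpha\beta^{-1}}(x)\in U_\alpha\subseteq\operatorname{dom}(\sigma^{|\alpha|})$; the required identity $\sigma^{|\alpha|}(\varphi_{\alpha\beta^{-1}}(x))=\sigma^{|\beta|}(x)$ then reduces, via $\varphi_{\alpha^{-1}}\circ\varphi_\alpha=\operatorname{id}_{U_{\alpha^{-1}}}$, to the membership $\varphi_{\beta^{-1}}(x)\in U_{\alpha^{-1}}$, which is an instance of the axiom $g_1.(U_{g_1^{-1}}\cap U_{g_2})=U_{g_1}\cap U_{g_1g_2}$.

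For bijectivity I would produce the inverse directly. Given $(x,k,y)\in\Gamma(X,\sigma)$, pick $n,m\in\mathbb{N}$ with $n-m=k$, $x\in\operatorname{dom}(\sigma^n)$, $y\in\operatorname{dom}(\sigma^m)$, $\sigma^n(x)=\sigma^m(y)$; orthogonality gives $\operatorname{dom}(\sigma^n)=\bigsqcup_{|\gamma|=n}U_\gamma$, so there are unique $\alpha,\beta\in\mathbb{F}_+$ of lengths $n,m$ with $x\in U_\alpha$, $y\in U_\beta$, and I put $\Xi^{-1}(x,k,y)=(\alpha\beta^{-1},y)$ (the word $\alpha\beta^{-1}$ need not be reduced). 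Three things must be checked: that this is independent of the choice of $(n,m)$, namely that enlarging $(n,m)$ to $(n+j,m+j)$ replaces $\alpha,\beta$ by $\alpha\delta,\beta\delta$, where $\delta$ is the length-$j$ word with $\sigma^n(x)=\sigma^m(y)\in U_\delta$, so $\alpha\beta^{-1}$ is unchanged; that $y\in U_{\beta\alpha^{-1}}$ (again an instance of the partial-action axiom, using $\sigma^n(x)=\sigma^m(y)$), so $(\alpha\beta^{-1},y)\in\mathbb{F}\ltimes X$; and that $\Xi$ and $\Xi^{-1}$ are mutually inverse. In the last point the only subtlety is the cancellation: if $\alpha=\alpha_0\delta$, $\beta=\beta_0\delta$ with $\alpha_0\beta_0^{-1}$ reduced, then $|\alpha_0|-|\beta_0|=k$, and a short computation with semi-saturation and $\sigma^n(x)=\sigma^m(y)$ gives $(\alpha\beta^{-1}).y=\varphi_{\alpha_0\beta_0^{-1}}(y)=x$, hence $\Xi(\alpha\beta^{-1},y)=(x,k,y)$; the other composite is immediate.

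Since $\Xi$ is a bijection fixing the unit space ($\Xi(e_{\mathbb{F}},x)=(x,0,x)$), to get a groupoid isomorphism it suffices to check multiplicativity. Recall composition in the Deaconu-Renault groupoid is $(x',k_1,y')(y',k_2,y)=(x',k_1+k_2,y)$, and let $\deg\colon\mathbb{F}\to\mathbb{Z}$ be the exponent-sum homomorphism, so that $\deg(\alpha\beta^{-1})=|\alpha|-|\beta|$ and $\Xi(g,z)=(g.z,\deg g,z)$. For a composable pair $\big((g,h.x),(h,x)\big)$ in $\mathbb{F}\ltimes X$ one verifies $x\in U_{(gh)^{-1}}$ and $(gh).x=g.(h.x)$ from the partial-action axioms, and then $\Xi(g,h.x)\,\Xi(h,x)=(g.(h.x),\deg g+\deg h,x)=((gh).x,\deg(gh),x)=\Xi(gh,x)$. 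For the topology: $\mathbb{F}\ltimes X$ carries the subspace topology from $\mathbb{F}\times X$ with $\mathbb{F}$ discrete, so the sets $\{g\}\times V$ with $V\subseteq U_{g^{-1}}$ open form a basis; writing $g=\alpha\beta^{-1}$ reduced, we have $V\subseteq U_{\beta\alpha^{-1}}\subseteq U_\beta$, so $\sigma^{|\beta|}$ is a homeomorphism on $V$ and $\sigma^{|\alpha|}$ is one on $\varphi_{\alpha\beta^{-1}}(V)\subseteq U_\alpha$, whence $\Xi(\{g\}\times V)=Z(\varphi_{\alpha\beta^{-1}}(V),|\alpha|,|\beta|,V)$ is basic open; conversely $\Xi^{-1}(Z(U,n,m,W))=\{(g,x)\in\mathbb{F}\ltimes X:\varphi_g(x)\in U,\ x\in W,\ \deg g=n-m\}$ is open, so $\Xi$ is a homeomorphism.

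The main obstacle is the combinatorial bookkeeping with reduced words: establishing and using the uniqueness of the normal form $g=\alpha\beta^{-1}$, handling the cancellation that appears in $\Xi^{-1}$ and in the product, and verifying that $\Xi^{-1}$ does not depend on the pair $(n,m)$ chosen to represent the $\mathbb{Z}$-degree. None of these is deep, but semi-saturation is only available when word lengths add, so every concatenation has to be split into its reduced part and its cancelling part before $\varphi_{gh}=\varphi_g\circ\varphi_h$ is invoked, and it is easy to misstep. One could instead build a continuous cocycle $c\colon\Gamma(X,\sigma)\to\mathbb{F}$ with $c^{-1}(e_{\mathbb{F}})=X$ and apply Theorem~\ref{thm:transf.groupoid}, but identifying the resulting partial action with $\varphi$ requires the same combinatorics and does not hand us the explicit formula for $\Xi$.
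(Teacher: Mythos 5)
Your proposal follows the same route as the paper's proof: the same formula for $\Xi$, the same well-definedness check via semi-saturation, the same explicit inverse obtained by locating $x$ and $y$ in cylinders $U_\alpha$, $U_\beta$ and cancelling the common tail $\delta$, and the same verification of openness on basic open sets. You in fact supply two details the paper elides, namely the multiplicativity of $\Xi$ (your observation that $\Xi(g,z)=(g.z,\deg g,z)$ with $\deg$ the exponent-sum homomorphism makes this immediate) and the independence of $\Xi^{-1}(x,k,y)$ from the choice of the pair $(n,m)$ representing $k$.

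One step is not correct as written: the identity
$\Xi^{-1}(Z(U,n,m,W))=\{(g,x)\in\mathbb{F}\ltimes X:\varphi_g(x)\in U,\ x\in W,\ \deg g=n-m\}$
fails in general, because membership of $(g.x,\,n-m,\,x)$ in $Z(U,n,m,W)$ also requires $\sigma^n(g.x)=\sigma^m(x)$, and this does not follow from $\deg g=n-m$ alone. Concretely, in the full shift on $\{a,b\}$ take $g=ab^{-1}$, $n=m=0$ and $U=W=X$: then $\deg g=0$ and $g.x\neq x$ (they lie in the disjoint cylinders $U_a$ and $U_b$), so $(g,x)$ belongs to your set but $(g.x,0,x)\notin Z(X,0,0,X)$. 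The conclusion survives with a small repair: orthogonality forces the condition $\sigma^n(g.x)=\sigma^m(x)$ to fail unless $g$ can be written (not necessarily reduced) as $\alpha'\beta'^{-1}$ with $|\alpha'|=n$, $|\beta'|=m$, $g.x\in U_{\alpha'}$ and $x\in U_{\beta'}$, so the actual preimage is
\[
\Xi^{-1}(Z(U,n,m,W))=\bigcup_{\substack{\alpha',\beta'\in\mathbb{F}_+\\ |\alpha'|=n,\ |\beta'|=m}}\{\alpha'\beta'^{-1}\}\times\Bigl(W\cap U_{\beta'}\cap\varphi_{\beta'}\bigl(\varphi_{\alpha'^{-1}}(U\cap U_{\alpha'})\bigr)\Bigr),
\]
which is still open, so continuity of $\Xi$ holds. (The paper's own displayed formula for this preimage has the same defect, and indeed drops the constraint coming from $U$ altogether, so this correction applies to both arguments.)
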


\begin{proof}
    Note that $\Xi$ is completely determined by the expression in the statement, since $U_g=\emptyset$ for $g\in\mathbb{F}$ such that we cannot write $g$ as $\alpha\beta^{-1}$ for $\alpha,\beta\in\mathbb{F}_+$. Moreover, if $g=\alpha\beta^{-1}$ in reduced form, then $\varphi_{g}=\varphi_{\alpha}\circ\varphi_{\beta^{-1}}$. Hence, for $x\in U_{\beta\alpha^{-1}}$, we have that $x\in U_{\beta}\scj\operatorname{dom}(\sigma^{|\beta|})$ and $\varphi_{\alpha\beta^{-1}}(x)\in U_\alpha\scj\operatorname{dom}(\sigma^{|\alpha|})$. Also
    $$\sigma^{|\alpha|}(\varphi_{\alpha\beta^{-1}}(x))=\varphi_{\alpha^{-1}}(\varphi_{\alpha}(\varphi_{\beta^{-1}}(x)))=\varphi_{\beta^{-1}}(x)=\sigma^{|\beta|}(x),$$
    from where it follows that $\Xi$ is well-defined.

    We now find an inverse for $\Xi$. Let $(x,k,y)\in\Gamma(X,\sigma)$, so that there exist $n,m\in\mathbb{N}$ such that $x\in\operatorname{dom}(\sigma^n)$, $y\in\operatorname{dom}(\sigma^m)$, $k=n-m$ and $\sigma^n(x)=\sigma^m(y)$. Then, there are $\alpha,\beta\in\mathbb{F}_+$ such that $|\alpha|=n$, $|\beta|=m$, $x\in U_{\alpha}$ and $y\in U_{\beta}$. Suppose that the reduced form of $\alpha\beta^{-1}$ is $\alpha'\beta'^{-1}$, so that there exists $\delta\in\mathbb{F}_+$ such that $\alpha=\alpha'\delta$ and $\beta=\beta'\delta$. Hence, $x\in U_{\alpha'}$, $y\in U_{\beta'}$, $|\alpha|-|\beta|=|\alpha'|-|\beta'|$ and
    $$\sigma^{|\alpha'|}(x)=\varphi_{\alpha'^{-1}}(x)=\varphi_{\delta}(\varphi_{\alpha^{-1}}(x))=\varphi_{\delta}(\varphi_{\beta^{-1}}(y))=\varphi_{\beta'^{-1}}(y)=\sigma^{|\beta'|}(y).$$
    The inverse of $\Xi$ then sends $(x,k,y)$ to $(\alpha'\beta'^{-1},y)$.

    The proof that $\Xi$ is a homomorphism of groupoids follows the same steps as the particular cases, and we omit it here (see, for instance, \cite[Theorem~4.4]{CasK2}).

    It remains to prove that $\Xi$ is a homeomorphism. Given $\alpha,\beta\in\mathbb{F}_+$ such that $\alpha\beta^{-1}$ is in reduced form and $U\scj U_{\beta\alpha^{-1}}$ is open, we have that $\Xi(\{\alpha\beta^{-1}\}\times U)=Z(\varphi_{\alpha\beta^{-1}}(U),|\alpha|,|\beta|,U)$ so that $\Xi$ is open. On the other hand, let $n,m\in\mathbb{N}$, $U\scj\operatorname{dom}(\sigma^n)$ and $V\scj\operatorname{dom}(\sigma^m)$ be open. Then $$\Xi^{-1}(Z(U,n,m,V))=\bigsqcup_{\alpha,\beta\in\mathbb{F}_+,|\alpha|=n,|\beta|=m}\{\alpha\beta^{-1}\}\times (V\cap U_{\beta\alpha^{-1}}),$$
    from where it follows that $\Xi$ is continuous.
\end{proof}

\begin{lem}\label{char:isotropy of DR groupoid}Let $\mathbb{F}\curvearrowright X$ be an orthogonal semi-saturated partial dynamical system, and $\sigma:U\to X$ the corresponding Deaconu-Renault system. If $(x,k,x) \in \operatorname{Iso}(\Gamma(X,\sigma)) $, then there exist $\delta, \gamma \in \mathbb{F}_+$ such that either $k=|\delta|$ and $x=\varphi_{\dt\gm\dt^{-1}}(x)$ or $k=-|\delta|$ and $x=\varphi_{\dt\gm^{-1}\dt^{-1}}(x)$.
\end{lem}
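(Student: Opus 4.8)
My plan is to move the statement, via the groupoid isomorphism $\Xi\colon\mathbb{F}\ltimes X\to\Gamma(X,\sigma)$ of Theorem~\ref{thm: from PA to DRS}, into the transformation groupoid $\mathbb{F}\ltimes X$, where the semi-saturated orthogonal structure is available directly. Because $(x,k,x)\in\operatorname{Iso}(\Gamma(X,\sigma))$ sits over the unit $x$ at both ends, $\Xi^{-1}(x,k,x)=(g,x)$ for a unique $g\in\mathbb{F}$ with $x\in U_{g^{-1}}$ and $\varphi_g(x)=g.x=x$, so in particular $x\in U_g\cap U_{g^{-1}}$. Since $U_{g^{-1}}\neq\emptyset$ we may write $g=\alpha\beta^{-1}$ in reduced form with $\alpha,\beta\in\mathbb{F}_+$, and then $k=|\alpha|-|\beta|$ by the formula defining $\Xi$. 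I would dispose of the trivial cases first: for $k=0$ take $\delta=\gamma=e$; for $k<0$ apply the case $k>0$ to the inverse $(x,-k,x)=(x,k,x)^{-1}\in\operatorname{Iso}(\Gamma(X,\sigma))$ and use that $\varphi_{\delta\gamma^{-1}\delta^{-1}}$ is the partial homeomorphism inverse to $\varphi_{\delta\gamma\delta^{-1}}$, so that the first alternative for $-k$ turns into the second for $k$ ($x$ being a fixed point). This leaves the case $k>0$, i.e.\ $|\alpha|>|\beta|$.

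\textbf{Combinatorial core.} The key point is to show that $\beta$ is a prefix of $\alpha$. Since $g=\alpha\beta^{-1}$ is reduced, semi-saturation gives $\varphi_g=\varphi_\alpha\circ\varphi_{\beta^{-1}}$, hence $U_g\subseteq U_\alpha$ and $U_{g^{-1}}=U_{\beta\alpha^{-1}}\subseteq U_\beta$, so $x\in U_\alpha\cap U_\beta$. Writing $\beta'$ for the length-$|\beta|$ prefix of $\alpha$, we have $U_\alpha\subseteq U_{\beta'}$, hence $x\in U_{\beta'}\cap U_\beta$; since $\operatorname{dom}(\sigma^{|\beta|})=\bigsqcup_{\mu\in\mathbb{F}_+,\,|\mu|=|\beta|}U_\mu$ is a disjoint union (by orthogonality), this forces $\beta'=\beta$. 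Thus $\alpha=\beta\eta$ for a unique $\eta\in\mathbb{F}_+$ with $|\eta|=|\alpha|-|\beta|=k$, and $g=\beta\eta\beta^{-1}$; this word is itself reduced, because $\alpha\beta^{-1}$ being reduced forces the last letter of $\alpha$ — equivalently, of the nonempty word $\eta$ — to differ from the last letter of $\beta$, after which no cancellation can occur anywhere in $\beta\eta\beta^{-1}$.

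\textbf{Conclusion.} I would apply semi-saturation once more to get $\varphi_g=\varphi_\beta\circ\varphi_\eta\circ\varphi_{\beta^{-1}}$. Setting $z:=\varphi_{\beta^{-1}}(x)=\sigma^{|\beta|}(x)$ (defined since $x\in U_\beta$) and combining $x=\varphi_\beta(z)$ with $\varphi_g(x)=x$, injectivity of $\varphi_\beta$ gives $\varphi_\eta(z)=z$; in particular $x=\varphi_{\beta\eta\beta^{-1}}(x)$ with $\beta,\eta\in\mathbb{F}_+$ and $|\eta|=k$. Taking $\delta:=\eta$ and $\gamma:=e$ then yields $\delta,\gamma\in\mathbb{F}_+$ with $k=|\delta|$ and $\varphi_{\delta\gamma\delta^{-1}}=\varphi_e=\operatorname{id}_X$, so $x=\varphi_{\delta\gamma\delta^{-1}}(x)$; this is the first alternative. (The intermediate identity $g=\beta\eta\beta^{-1}$ records, more precisely, that the element of $\mathbb{F}\ltimes X$ lying over $(x,k,x)$ is the $\beta$-conjugate of the positive word $\eta$ of length $k$, with $\varphi_\eta$ fixing $\sigma^{|\beta|}(x)$.)

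\textbf{Main obstacle.} The only delicate point I anticipate is reduced-form bookkeeping — making sure semi-saturation is invoked only on reduced expressions ($\alpha\beta^{-1}$, $\beta\eta\beta^{-1}$, and the verification that $\beta\eta\beta^{-1}$ is reduced) — together with the disjointness of the cover $\operatorname{dom}(\sigma^{|\beta|})=\bigsqcup_\mu U_\mu$. Both follow at once from orthogonality and the elementary remarks recorded just before Theorem~\ref{thm: from PA to DRS}, so I do not expect a genuine difficulty.
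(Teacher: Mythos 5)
Your argument follows essentially the same route as the paper's: from $(x,k,x)$ you extract $\alpha,\beta\in\mathbb{F}_+$ with $x\in U_\alpha\cap U_\beta$, use orthogonality (disjointness of the sets $U_\mu$ over words $\mu$ of a fixed length) to force one of $\alpha,\beta$ to be a prefix of the other, and use semi-saturation to exhibit the isotropy element as $\varphi_{\beta\eta\beta^{-1}}$ with $\eta\in\mathbb{F}_+$ and $|\eta|=|k|$. Your write-up is in fact more careful than the paper's at the prefix step and in the reduced-form bookkeeping, and your reduction of $k<0$ to $k>0$ via inverses replaces the paper's symmetric two-case treatment; both are fine.

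The one real problem is your final assignment of witnesses. Setting $\delta:=\eta$ and $\gamma:=e$ makes $\delta\gamma\delta^{-1}=e$, so your conclusion collapses to the tautology $x=\varphi_e(x)$, and the lemma as you have ``proved'' it would hold for every $x$ and every $k\ge 0$ regardless of whether $(x,k,x)$ is in the isotropy. This only satisfies the statement because of what is evidently a typo in it: the intended conclusion, as the paper's own use of the lemma in Proposition~\ref{PA:torsion free and abelian} makes clear, is that $(x,k,x)=\Xi(\delta\gamma\delta^{-1},x)$, which forces $k=|\delta\gamma|-|\delta|=|\gamma|$ (so ``$k=|\delta|$'' in the statement, and likewise ``$k=\pm|\delta|$'' in the paper's proof, should read $k=\pm|\gamma|$). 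Your own intermediate identity $x=\varphi_{\beta\eta\beta^{-1}}(x)$ with $|\eta|=k$ already contains the substantive content; the correct choice is $\delta:=\beta$ and $\gamma:=\eta$, not $\delta:=\eta$, $\gamma:=e$. With that relabeling your proof delivers exactly what the downstream argument needs, namely that $\Xi$ maps the isotropy element $(\beta\eta\beta^{-1},x)$ of $\mathbb{F}\ltimes X$ onto $(x,k,x)$.
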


\begin{proof} If $(x,k,x) \in \operatorname{Iso}(\Gamma(X,\sigma)) $, then there exist $m,n\in\mathbb{N}$ such that $x\in\operatorname{dom}(\sigma^m) \cap \operatorname{dom}(\sigma^n)$, $k=m-n$ and $\sigma^m(x)=\sigma^n(x)$. Then, there are $\alpha,\beta\in\mathbb{F}_+$ such that $|\alpha|=m$, $|\beta|=n$, $x\in U_{\alpha}\cap U_{\beta} (\neq \emptyset)$.  
Then, since $\mathbb{F}\curvearrowright X$ is orthogonal, we have either $\alpha=\beta\gamma$ or $\beta=\alpha\gamma$ for some $\gamma \in \mathbb{F}_+$.
Since $\mathbb{F}\curvearrowright X$ is semi-saturated, we have that
$$x=\varphi_{\alpha\bt^{-1}}(x)= 
\left\{
   \begin{array}{ll}
    \varphi_{\beta\gamma\beta^{-1}}(x)  & \hbox{if\ }  \alpha=\beta\gamma,   \\
\varphi_{\alpha\gamma^{-1}\alpha^{-1}}(x) &  \hbox{if\ }  \beta=\alpha\gamma.
        \end{array}
\right. $$
Thus, one concludes that there exist $\delta (=\beta ~\text{or}~ \alpha), \gamma \in \mathbb{F}_+$ such that $k=\pm|\delta|$ and $x=\varphi_{\dt\gm\dt^{-1}}(x)$ or $x=\varphi_{\dt\gm^{-1}\dt^{-1}}(x)$.
\end{proof}

\begin{prop}\label{PA:torsion free and abelian} For an orthogonal semi-saturated partial dynamical system $\mathbb{F}\curvearrowright X$, $\operatorname{Iso}(\mathbb{F}\ltimes X)$ is torsion-free  and abelian.
\end{prop}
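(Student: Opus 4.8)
The plan is to reduce everything to a statement about the isotropy groups of $\Gm(X,\sigma)$ via the isomorphism $\Xi:\mathbb{F}\ltimes X\to\Gm(X,\sigma)$ of Theorem~\ref{thm: from PA to DRS}, and then to analyze those isotropy groups directly. Since $\Xi$ is an isomorphism of topological groupoids, it restricts to an isomorphism $\operatorname{Iso}(\mathbb{F}\ltimes X)\cong\operatorname{Iso}(\Gm(X,\sigma))$, so it suffices to prove that $\operatorname{Iso}(\Gm(X,\sigma))$ is torsion-free and abelian; equivalently, that each isotropy group $\Gm(X,\sigma)_x^x\scj\Z$ for $x\in X$, since every subgroup of $\Z$ is torsion-free and abelian. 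So the whole content is: \emph{for each $x\in X$, the map sending $(x,k,x)$ to $k$ is an injective group homomorphism $\Gm(X,\sigma)_x^x\to\Z$.}

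First I would observe that this map is a well-defined homomorphism by the very definition of the composition and the degree $k=n-m$ in $\Gm(X,\sigma)$; the only thing to check is injectivity, i.e.\ that $(x,0,x)$ is the only isotropy element of degree $0$ at $x$. Suppose $(x,0,x)\in\operatorname{Iso}(\Gm(X,\sigma))$; then there are $n,m\in\N$ with $n-m=0$, so $n=m$, and $\sigma^n(x)=\sigma^m(x)$, with $x\in\operatorname{dom}(\sigma^n)$. By the structure of the orthogonal semi-saturated system, $x\in\operatorname{dom}(\sigma^n)$ forces $x\in U_\alpha$ for a \emph{unique} $\alpha\in\mathbb{F}_+$ with $|\alpha|=n$ (the sets $U_\alpha$, $|\alpha|=n$, are pairwise disjoint by orthogonality and semi-saturation, since $U_{\alpha}\cap U_{\beta}=\emptyset$ for distinct $\alpha,\beta$ of the same length — this is the key use of the hypotheses). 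Writing out the triple in $\mathbb{F}\ltimes X$ under $\Xi^{-1}$, the element $(x,0,x)$ corresponds to $(\alpha\alpha^{-1},x)=(e_{\mathbb{F}},x)$, which is the unit. Hence the degree map is injective and $\Gm(X,\sigma)_x^x$ embeds in $\Z$.

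Alternatively, and perhaps more cleanly, I would phrase this using Lemma~\ref{char:isotropy of DR groupoid}: any nontrivial isotropy element at $x$ has degree $\pm|\delta|$ for some $\delta\in\mathbb{F}_+$, in particular nonzero; combined with the fact that the degree map is a homomorphism, an element of finite order would have to have degree $0$, hence be a unit, giving torsion-freeness; and two isotropy elements at the same $x$ commute because their images in $\Z$ commute and the degree map is injective, giving commutativity. The main obstacle — really the only substantive point — is establishing that the degree map $(x,k,x)\mapsto k$ is \emph{injective} on each isotropy group, i.e.\ that degree-$0$ isotropy is trivial; this rests precisely on the uniqueness of the word $\alpha\in\mathbb{F}_+$ of a given length with $x\in U_\alpha$, which is exactly what orthogonality buys us, and everything else is then formal since subgroups of $\Z$ are automatically torsion-free and abelian. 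I would close by transporting the conclusion back along $\Xi$ to $\operatorname{Iso}(\mathbb{F}\ltimes X)$.
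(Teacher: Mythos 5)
Your proposal is correct and follows essentially the same route as the paper: transport the isotropy along the isomorphism $\Xi$ of Theorem~\ref{thm: from PA to DRS} and observe that each isotropy group of $\Gamma(X,\sigma)$ embeds in $\Z$, hence is torsion-free and abelian. The only difference is cosmetic: the injectivity of the degree map that you treat as the substantive point is automatic, since elements of $\Gamma(X,\sigma)$ are by definition triples $(x,k,x)$ determined by their entries, so $\Gamma(X,\sigma)_x^x\scj\{x\}\times\Z\times\{x\}$ tautologically and no appeal to orthogonality is needed at that step (it is already used in establishing $\Xi$ itself).
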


\begin{proof} We first  prove that  the isomorphism $\Xi:\mathbb{F}\ltimes X\to \Gamma(X,\sigma)$ given by  $\Xi(\alpha\beta^{-1},x)=(\varphi_{\alpha\beta^{-1}}(x),|\alpha|-|\beta|,x)$   restricts to an isomorphism between $\operatorname{Iso}(\mathbb{F}\ltimes X)$  and $\operatorname{Iso}(\Gamma(X,\sigma))$. To do that, we only need to show that $\Xi|_{\operatorname{Iso}(\mathbb{F}\ltimes X)}: \operatorname{Iso}(\mathbb{F}\ltimes X) \to \operatorname{Iso}(\Gamma(X,\sigma))$ is onto. Choose $(x,k,x) \in \operatorname{Iso}(\Gamma(X,\sigma)) $. 
Then, by Lemma \ref{char:isotropy of DR groupoid}, there exist $\delta, \gamma \in \mathbb{F}_+$ such that either $k=|\delta|$ and $x=\varphi_{\dt\gm\dt^{-1}}(x)$ or $k=-|\delta|$ and $x=\varphi_{\dt\gm^{-1}\dt^{-1}}(x)$. One also sees that either  $\Xi(\dt\gm\dt^{-1}, x)=(x,k,x)$ or  $\Xi(\dt\gm^{-1}\dt^{-1}, x)=(x,k,x)$. It is clear that $(\dt\gm\dt^{-1}, x), (\dt\gm^{-1}\dt^{-1}, x) \in \operatorname{Iso}(\mathbb{F}\ltimes X)$. Thus,  $\Xi|_{\operatorname{Iso}(\mathbb{F}\ltimes X)}: \operatorname{Iso}(\mathbb{F}\ltimes X) \to \operatorname{Iso}(\Gamma(X,\sigma))$ is onto.

Now, since $\operatorname{Iso}(\Gamma(X,\sigma))$ is torsion-free and abelian, we have that $\operatorname{Iso}(\mathbb{F}\ltimes X)$ is torsion-free  and abelian.
\end{proof}

\begin{cor}\label{isom-equivalences:oss} Let $\mathbb{F}\curvearrowright X$ and $\mathbb{F}'\curvearrowright Y$ be orthogonal semi-saturated partial dynamical systems  and  $(X,\sigma)$, $(Y,\tau)$ the corresponding Deaconu-Renault systems.  Then the following are equivalent:
\begin{enumerate}
\item $\mathbb{F}\curvearrowright X$ and $\mathbb{F}'\curvearrowright Y$  are isomorphic in $\PDS$.
\item $ \mathbb{F} \ltimes X$ and $ \mathbb{F}' \ltimes Y$ are isomorphic as topological groupoids.
\item There exists a continuous orbit equivalence $(\phi,a,b)$ between $\mathbb{F}\curvearrowright X$ and $\mathbb{F}'\curvearrowright Y$  such that $a$ is a cocycle and $(\phi,a)$ preserves essential stabilisers.
\item There is an isomorphism $\Phi: C_0(X)\rtimes \mathbb{F} \to C_0(Y)\rtimes \mathbb{F}' $ such that $\Phi(C_0(X))=C_0(Y)$.
\item There is an essential-stabiliser-preserving continuous orbit equivalence $(\phi, k,l,k',l')$  from $(X, \sigma)$ to $(Y, \tau)$.
\item $\Gamma(X, \sigma)$ and $ \Gamma(Y, \tau)$ are isomorphic as topological groupoids.
\item There is an isomorphism $\Phi':C^*( \Gamma(X, \sigma)) \to C^*(\Gamma(Y, \tau))$ such that $\Phi'(C_0(X))=C_0(Y)$.
\end{enumerate}
\end{cor}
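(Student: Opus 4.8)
The plan is to reduce the seven-fold equivalence to results already established in the excerpt, transported between the ``partial action'' side (items (1)--(4)) and the ``Deaconu--Renault'' side (items (5)--(7)) through the isomorphisms of topological groupoids $\Xi:\mathbb{F}\ltimes X\to\Gamma(X,\sigma)$ and $\mathbb{F}'\ltimes Y\to\Gamma(Y,\tau)$ supplied by Theorem~\ref{thm: from PA to DRS}. Everything then rests on Theorem~\ref{isom-equivalences}, Proposition~\ref{prop:isom-equivalences}, and two external inputs from \cite{CRST}.

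First I would pin down the standing structural facts. By Proposition~\ref{PA:torsion free and abelian}, $\operatorname{Iso}(\mathbb{F}\ltimes X)$ and $\operatorname{Iso}(\mathbb{F}'\ltimes Y)$ are torsion-free and abelian; since the interior of the isotropy is a subgroupoid, it is torsion-free and abelian as well, so Lemma~\ref{partial system:torsion free and abelian} yields that $\operatorname{Stab}^{ess}(x)$ and $\operatorname{Stab}^{ess}(y)$ are torsion-free and abelian for all $x\in X$ and $y\in Y$. On the Deaconu--Renault side the isotropy groups are subgroups of $\Z$, hence automatically torsion-free and abelian. Finally $\Gamma(X,\sigma)$ and $\Gamma(Y,\tau)$ are amenable étale groupoids, so their full and reduced $C^*$-algebras agree, and the partial crossed product $C_0(X)\rtimes\mathbb{F}$ is diagonal-preservingly isomorphic to $C^*_r(\mathbb{F}\ltimes X)$ (and likewise for $Y$); thus item (4) may be read interchangeably as a statement about full or reduced crossed products.

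With this in hand I would chain the equivalences. The block (1)$\Leftrightarrow$(2)$\Leftrightarrow$(3) is Theorem~\ref{isom-equivalences}: items (1), (2) there are items (1), (2) here, and item (4) of Theorem~\ref{isom-equivalences} --- a continuous orbit equivalence whose cocycle preserves essential stabilisers --- is exactly item (3) here. The equivalence (2)$\Leftrightarrow$(4) is Proposition~\ref{prop:isom-equivalences}, whose torsion-free-abelian hypothesis was verified above; its extra item (5) is item (4) here, modulo the harmless full-versus-reduced identification. The equivalence (2)$\Leftrightarrow$(6) is immediate from Theorem~\ref{thm: from PA to DRS} applied to both systems. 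For (6)$\Leftrightarrow$(7) I would invoke \cite[Theorem~6.2]{CRST} for the second-countable locally compact Hausdorff étale groupoids $\Gamma(X,\sigma)$, $\Gamma(Y,\tau)$ with trivial grading, using that their isotropy interiors are torsion-free abelian and that amenability identifies $C^*$ with $C^*_r$. Lastly, (5)$\Leftrightarrow$(6) is the Deaconu--Renault rigidity statement that two such systems are essential-stabiliser-preserving continuously orbit equivalent precisely when their Deaconu--Renault groupoids are isomorphic as topological groupoids, which I would cite from Section~8 of \cite{CRST} (where Definitions~8.1 and~8.9 live). Since each item is equivalent to (2) or to (6), and (2)$\Leftrightarrow$(6), all seven are equivalent.

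The main obstacle I expect is not a deep argument but the careful reconciliation of definitions: ``preserves essential stabilisers'' appears in three guises --- for the cocycle $a$ of a partial continuous orbit equivalence, for the crossed-product isomorphism, and for a continuous orbit equivalence $(\phi,k,l,k',l')$ of Deaconu--Renault systems --- and one must check these correspond under $\Xi$, and that the cited \cite{CRST} results (especially the Deaconu--Renault version used for (5)$\Leftrightarrow$(6)) are stated at the generality needed here, namely arbitrary open-domain local homeomorphisms on second-countable locally compact Hausdorff spaces with the essential-stabiliser refinement. The only other point requiring explicit mention is the full-versus-reduced passage, which is immediate because every groupoid in sight is amenable.
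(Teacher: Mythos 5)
Your proposal is correct and follows essentially the same route as the paper: (1)--(4) via Theorem~\ref{isom-equivalences} and Proposition~\ref{prop:isom-equivalences} (with the torsion-free abelian hypothesis supplied by Proposition~\ref{PA:torsion free and abelian}), (2)$\Leftrightarrow$(6) via Theorem~\ref{thm: from PA to DRS}, and (5)$\Leftrightarrow$(6)$\Leftrightarrow$(7) by citing the Deaconu--Renault rigidity results of \cite{CRST} (the paper uses Theorem~8.2 there in one stroke where you split off Theorem~6.2). Your explicit handling of the full-versus-reduced crossed product via amenability is a point the paper leaves implicit, but it does not change the argument.
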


\begin{proof} (1) $\iff$ (2) $\iff$ (3) $\iff$ (4) follow from Proposition \ref{prop:isom-equivalences} and Proposition \ref{PA:torsion free and abelian}.

(2)$\iff$(6) follows form Theorem \ref{thm: from PA to DRS}.

(5) $\iff$ (6) $\iff$ (7) follow from \cite[Theorem 8.2]{CRST}.
\end{proof}

The equivalence between (3) and (5) in Corollary \ref{isom-equivalences:oss} follows from the isomorphism between the transformation groupoid and the Deaconu-Renault groupoid. And for that, we need the hypothesis that the continuous orbit equivalences preserve stabilisers. Next, we show that we can remove this hypothesis and still obtain an equivalence between the notions of orbit equivalence in the context of this section.

\begin{prop}\label{coe-coe}
Let $\mathbb{F}\curvearrowright X$ and $\mathbb{F}'\curvearrowright Y$ be orthogonal semi-saturated partial dynamical systems and $\sigma:U\to X$, $\tau:V\to Y$ the corresponding Deaconu-Renault systems. Then, $\mathbb{F}\curvearrowright X$ and $\mathbb{F}'\curvearrowright Y$ are continuous orbit equivalent if and only if $(X,\sigma)$ and $(Y,\tau)$ are continuous orbit equivalent.
\end{prop}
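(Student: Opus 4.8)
Here is the plan I would follow.

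The plan is to keep the homeomorphism $\phi\colon X\to Y$ fixed and to pass between the cocycle data $(a,b)$ of a continuous orbit equivalence of the partial systems and the multiplicity data $(k,l,k',l')$ of a continuous orbit equivalence of the Deaconu--Renault systems. Two facts are used repeatedly: since both systems are semi-saturated and orthogonal, each partial action is determined by the homeomorphisms attached to the generators, $U_g\neq\emptyset$ forces $g=\af\bt^{-1}$ with $\af,\bt\in\mathbb{F}_+$, and $\operatorname{dom}(\sm^n)$ is the disjoint union of the $U_\af$ over $\af\in\mathbb{F}_+$ with $|\af|=n$, on which $\sm^n=\varphi_{\af^{-1}}$; and the identity $g_1.(U_{g_1^{-1}}\cap U_{g_2})=U_{g_1}\cap U_{g_1g_2}$ from the Remark following the definition of a partial action will be the main tool for checking domain conditions. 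Write $\varphi'$ for the partial homeomorphisms of $\mathbb{F}'\curvearrowright Y$, so that $\ta^n=\varphi'_{\nu^{-1}}$ on $V_\nu$ for $\nu\in\mathbb{F}'_+$, $|\nu|=n$.

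\emph{Partial orbit equivalence $\Rightarrow$ Deaconu--Renault orbit equivalence.} Let $(\phi,a,b)$ be a continuous orbit equivalence of $\mathbb{F}\curvearrowright X$ and $\mathbb{F}'\curvearrowright Y$. For $x\in\operatorname{dom}(\sm)$ pick the unique generator $\af$ with $x\in U_\af$, so $\sm(x)=\af^{-1}.x$; since $\phi(x)\in V_{a(\af^{-1},x)^{-1}}\neq\emptyset$, write $a(\af^{-1},x)=\mu\nu^{-1}$ in reduced form with $\mu,\nu\in\mathbb{F}'_+$ and set $l(x)=|\nu|$, $k(x)=|\mu|$. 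From $\phi(\sm(x))=a(\af^{-1},x).\phi(x)=\varphi'_\mu(\varphi'_{\nu^{-1}}(\phi(x)))$ (semi-saturation, as $\mu\nu^{-1}$ is reduced) one reads off $\phi(x)\in V_\nu$ and $\phi(\sm(x))\in V_\mu$, and applying $\varphi'_{\mu^{-1}}$ to that equality gives $\ta^{l(x)}(\phi(x))=\varphi'_{\nu^{-1}}(\phi(x))=\varphi'_{\mu^{-1}}(\phi(\sm(x)))=\ta^{k(x)}(\phi(\sm(x)))$. As $\mathbb{F}'$ is discrete and $a$ is continuous, $k,l$ are locally constant on each $U_\af$, hence continuous on $\operatorname{dom}(\sm)$; running the same recipe on $b$ produces continuous $k',l'$ on $\operatorname{dom}(\ta)$ with the dual identity, so $(\phi,k,l,k',l')$ is a continuous orbit equivalence of $(X,\sm)$ and $(Y,\ta)$.

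\emph{Deaconu--Renault orbit equivalence $\Rightarrow$ partial orbit equivalence.} Given $(\phi,k,l,k',l')$, define $a$ first on generators and their inverses. For $\af\in\CA$ and $x\in U_{\af^{-1}}$ we have $\af.x\in U_\af\scj\operatorname{dom}(\sm)$ and $\sm(\af.x)=x$, so the defining identity at $\af.x$ reads $\ta^{l(\af.x)}(\phi(\af.x))=\ta^{k(\af.x)}(\phi(x))$; let $\nu,\mu\in\mathbb{F}'_+$ be determined by $|\nu|=l(\af.x)$, $\phi(\af.x)\in V_\nu$ and $|\mu|=k(\af.x)$, $\phi(x)\in V_\mu$, and set $a(\af,x)=\nu\mu^{-1}$. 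A short computation with the Remark-identity (giving $\phi(x)\in V_\mu\cap V_{\mu\nu^{-1}}$) and condition (2) of the definition of a partial action yields $\phi(x)\in V_{a(\af,x)^{-1}}$ and $\phi(\af.x)=a(\af,x).\phi(x)$; one defines $a(\af^{-1},\cdot)$ on $U_\af$ analogously from the identity at $x$ itself. Then extend $a$ to $\mathbb{F}$ by induction on word length: if $g=s_1g'$ is reduced with $s_1\in\CA\cup\CA^{-1}$, then for $x\in U_{g^{-1}}$ one has $x\in U_{g'^{-1}}$ and $g'.x\in U_{s_1^{-1}}$, and we put $a(g,x)=a(s_1,g'.x)\,a(g',x)$; using the inductive identity $\phi(g'.x)=a(g',x).\phi(x)$, the Remark-identity gives $\phi(x)\in V_{a(g',x)^{-1}}\cap V_{a(g,x)^{-1}}$, whence condition (2) yields $\phi(g.x)=a(g,x).\phi(x)$, with continuity inherited. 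Building $b$ symmetrically from $k',l'$, the triple $(\phi,a,b)$ is a continuous orbit equivalence of $\mathbb{F}\curvearrowright X$ and $\mathbb{F}'\curvearrowright Y$.

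I expect the second implication to be where the work lies: keeping the reduced-word extension of $a$ compatible with all the domain restrictions of the partial action on $Y$, and checking that the telescoping product $a(s_1,x_1)\cdots a(s_n,x_n)$ genuinely collapses (via iterated use of condition (2)) to a single group element acting as the orbit map, is precisely the point where orthogonality and semi-saturation of both systems are needed. The first implication is comparatively short; the only observation is that the length of the reduced form of $a(\af^{-1},x)$ records the correct multiplicities.
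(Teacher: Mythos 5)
Your proposal is correct and follows essentially the same route as the paper: in the forward direction you read off $k(x)=|\mu|$ and $l(x)=|\nu|$ from the reduced form $a(\alpha^{-1},x)=\mu\nu^{-1}$ exactly as the paper does, and in the converse you define $a$ on generators from the unique positive words realizing $\tau^{l}$ and $\tau^{k}$ (handling the possible common suffix $\zeta$ when reducing $\mu\nu^{-1}$) and then extend along reduced words by the cocycle recursion, which is the paper's inductive construction. The only cosmetic difference is that you define $a(\alpha,\cdot)$ directly from the identity at $\alpha.x$, whereas the paper sets $a(\alpha,x)=a(\alpha^{-1},\alpha.x)^{-1}$.
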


\begin{proof}
    First, let $(\phi,a,b)$ is a continuous orbit equivalence between $\mathbb{F}\curvearrowright X$ and $\mathbb{F}'\curvearrowright Y$. By hypothesis, for every $g\in\mathbb{F}$ and $x\in U_{g^{-1}}$, we have that $\phi(x)\in V_{a(g,x)^{-1}}$. This implies that $a(g,x)=\mu\nu^{-1}$ for $\mu,\nu\in\mathbb{F}'_+$. Now, suppose that $x\in U$, that is $x\in U_{\alpha}$ for some $\alpha\in\CA$ and define $k(x)=|\mu|$ and $l(x)=|\nu|$, where $a(\alpha^{-1},x)=\mu\nu^{-1}$, which is in reduced form in $\mathbb{F}'$. Using that $(\phi,a,b)$ is a continuous orbit equivalence and the partial action is semi-saturated, we obtain $\phi(\alpha^{-1}.x)=(\mu\nu^{-1}).\phi(x)=\mu.(\nu^{-1}.\phi(x))$, from where it follows that
    \[\tau^{l(x)}(\phi(x))=\nu^{-1}.\phi(x)=\mu^{-1}.(\mu.(\nu^{-1}.\phi(x)))=\mu^{-1}.\phi(\alpha^{-1}.x)=\tau^{k(x)}(\phi(\sigma(x))).\]
    This gives us the desired maps $k:U\to\mathbb{N}$ and $l:V\to\mathbb{N}$. Because $a$ is continuous, for $x\in U_{\alpha}$, we have that $a$ is constant in a neighborhood of $(\alpha^{-1},x)$, and hence $k$ and $l$ are constant in a neighborhood of $x$. This proves the continuity of $k$ and $l$. The maps $k'$ and $l'$ are built similarly from $\phi^{-1}$ and $b$.

    Suppose now that $(\phi,k,l,k',l')$ is a continuous orbit equivalence between $(X,\sigma)$ and $(Y,\tau)$. For any $x\in X$, we set $a(e_{\mathbb{F}},x)=e_{\mathbb{F}'}$. Now, let $\alpha\in\CA$ and $x\in U_{\alpha}$. Since $\tau^{l(x)}(\phi(x))=\tau^{k(x)}(\phi(\sigma(x)))$, we have that there are unique $\mu,\nu\in\mathbb{F}'_+$ such that $|\mu|=k(x)$, $|\nu|=l(x)$, $\phi(x)\in V_{\nu}$, $\phi(\sigma(x))\in V_{\mu}$, $\tau^{l(x)}(\phi(x))=\nu^{-1}.\phi(x)$ and $\tau^{k(x)}(\phi(\sigma(x)))=\mu^{-1}.\sigma(x)$. We set $a(\alpha^{-1},x)=\mu\nu^{-1}$. Let us prove that the map $a(\alpha^{-1},\cdot):U_{\alpha}\to \mathbb{F}'$ is continuous, $\phi(x)\in V_{a(\alpha^{-1},x)^{-1}}$ and $a(\alpha^{-1},x).\phi(x)=\phi(\alpha^{-1}.x)$ for every $x\in U_{\alpha}$.

    Let $x\in U_{\alpha}$ and suppose that $a(\alpha^{-1},x)=\mu\nu^{-1}$ as above. For the continuity of $a(\alpha^{-1},\cdot)$ at $x$, consider the set $W=\phi^{-1}(V_\nu)\cap\varphi_{\alpha}(\phi^{-1}(V_\mu\cap U_{\alpha^{-1}}))\cap k^{-1}(k(x))\cap l^{-1}(l(x))$. Since $\phi$, $k$ and $l$ are continuous and $\varphi_{\alpha}$ is open, we have that $W$ is an open neighborhood of $x$. Let $y\in W$ and observe that $y\in U_\alpha$, $\phi(y)\in V_{\nu}$ and there exists $z\in \phi^{-1}(V_\mu)\cap U_{\alpha^{-1}}$ such that $y=\varphi_{\alpha}(z)$. In particular $\phi(\sigma(y))=\phi(z)\in V_{\mu}$. Moreover, since $k(y)=k(x)=|\mu|$ and $l(y)=l(x)=|\nu|$, we have that $a(\alpha^{-1},y)=\mu\nu^{-1}$. Hence $a(\alpha^{-1},W)=\{\mu\nu^{-1}\}$ and $a(\alpha^{-1},\cdot)$ is continuous at $x$. Now, the equality $\tau^{l(x)}(\phi(x))=\tau^{k(x)}(\phi(\sigma(x)))$ implies that $\nu^{-1}.\phi(x)=\mu^{-1}.\phi(\alpha^{-1}.x)$. Note that there exists $\mu',\nu',\zeta\in\mathbb{F}'$ such that $\mu=\mu'\zeta$, $\nu=\nu'\zeta$ and $\mu'\nu'^{-1}$ is in reduced form. Using that the partial action $\varphi'$ is semi-saturated and that $\varphi'_{\zeta^{-1}}$ is injective, we then conclude that $\nu'^{-1}.\phi(x)=\mu'^{-1}.\varphi(\alpha^{-1}.x)$, from where it follows that $\phi(x)\in V_{\nu'\mu'^{-1}}=V_{a(\alpha^{-1},x)^{-1}}$ and
    \[a(\alpha^{-1},x).\phi(x)=(\mu'\nu'^{-1}).\phi(x)=\mu'(\nu'^{-1}.\phi(x))=\mu'.(\mu'^{-1}.\phi(\alpha.x))=\phi(\alpha.x).\]

    Using the above construction, for $\alpha\in\CA$ and $x\in U_{\alpha^{-1}}$, we define $a(\alpha,x)=a(\alpha^{-1},\alpha.x)^{-1}$. It is straightforward to check that $a(\alpha,\cdot):U_{\alpha^{-1}}\to\mathbb{F}'$ is continuous, $\phi(x)\in V_{a(\alpha,x)^{-1}}$ and $a(\alpha,x).\phi(x)=\phi(\alpha.x)$ for every $x\in U_{\alpha^{-1}}$. Inductively, we define $a(g,x)$ in such way that if $g=g_1\cdots g_{n+1}$ is the reduced form of $g$, then $a(g,x):=a(g_1,g_2\cdots g_{n+1}.x)a(g_2\cdots g_{n+1},x)$. To see that this expression is well-defined, we use that the partial action is semi-saturated to guarantee that if $x\in U_{g^{-1}}$, then $x\in U_{(g_2\cdots g_{n+1})^{-1}}$, since $\varphi_g=\varphi_{g_1}\circ\varphi_{g_2\cdots g_{n+1}}$ as functions.
    
    By induction, we prove the following: for every $n\in\mathbb{N}$, every $g\in\mathbb{F}$ with $|g|=n$, and every $x\in U_{g^{-1}}$, the map $a(g,\cdot):U_{g^{-1}}\to \mathbb{F}'$ is continuous, $\phi(x)\in V_{a(g,x)^{-1}}$ and $\phi(g.x)=a(g,x).\phi(x)$.

    The case $n=0$ is immediate.

    Suppose the statement is true for $n\in\mathbb{N}$. Let $g'\in\mathbb{F}$ with $|g'|=n+1$ and $x\in U_{g'^{-1}}$. Write $g'=gh$ where $|g|=1$ and $|h|=n$. By definition $a(gh,x)=a(g,h.x)a(h,x)$. Observe that $\phi(x)\in V_{a(h,x)^{-1}}$ and $\phi(h.x)\in V_{a(g,h.x)^{-1}}$. Since $\phi(h.x)=a(h,x).\phi(x)$, by the definition of partial action, we have that $\phi(x)\in V_{a(h,x)^{-1}a(g,h.x)^{-1}}=V_{a(g',x)^{-1}}$. Also, using that $\varphi$ is semi-saturated and that $\phi(x)\in V_{a(h,x)^{-1}}\cap V_{a(g',x)^{-1}}$, we see that \[\phi(g'.x)=\phi(g.(h.x))=a(g,h.x).\phi(h.x)=a(g,h.x).(a(h,x).\phi(x))=a(g',x).\phi(x).\]
    For the continuity of $a(g',\cdot)$, let $x\in U_{g'^{-1}}$, $U_1$ be an open neighborhood of $h.x$ such that $a(g,\cdot)$ is constant on $U_1$ and $U_2$ be an open neighborhood of $x$ such that $a(h,\cdot)$ is constant on $U_2$. Then, it is easy to see that $a(g',\cdot)$ is constant in the open neighborhood $\varphi_{h^{-1}}(U_1)\cap U_2$ of $x$. Similarly, we build a function $b$ such that $(\phi,a,b)$ is a continuous orbit equivalence.
\end{proof}

\subsection{Eventual conjugacy} Let $\mathbb{F}\curvearrowright X$  be an orthogonal semi-saturated partial dynamical system. By \cite[Theorem 4.3]{ExelLaca2003}, there exists a unique strongly continuous one-parameter group $\rho^X$ of automorphisms of 
$C_0(X) \rtimes \mathbb{F}$ such that 
\[\rho_t^X(f\dt_\af)=e^{it}f \dt_\af ~\text{and}~ \rho_t^X(g\dt_\emptyset)=g \dt_\emptyset \]
for all $t \in \mathbb{R}$, $\af \in \CA$, $f \in C_0(U_\af)$ and $g \in C_0(U_\emptyset)$. Since $\rho^X_t$ is $2\pi$-periodic, it induces  an action  of the circle, namely $\rho^X: \mathbb{T} \to \operatorname{Aut}(C_0(X) \rtimes \mathbb{F})$, such that 
\[\rho_z^X(f\dt_\af)=zf \dt_\af ~\text{and}~ \rho_z^X(g\dt_\emptyset)=g \dt_\emptyset \]
for all $z \in \mathbb{T}$, $\af \in \CA$, $f \in  C_0(U_\af)$ and $g \in C_0(U_\emptyset)$. 
In this context, we study when the isomorphism of crossed products in Corollary \ref{isom-equivalences:oss}(4) intertwines the actions introduced above. 

There is also an action $\gamma^X: \T \to \text{Aut}(C^*(\Gamma(X,\sigma)))$
such that $\gamma^X_z(f)(x,n,y)=z^nf(x,n,y)$ for all $z \in \T$, $(x,n,y) \in \Gamma(X, \sigma)$ and $f \in C_c(\Gamma(X, \sigma))$.

We first prove that there is a diagonal-preserving isomorphism between the groupoid $C^*$-algebra 
$ C^*( \Gamma(X, \sigma))$ and the crossed product $ C_0(X)\rtimes \mathbb{F}$ such that the groupoid action $\gamma^X_z$ is intertwined with the action $\rho^X_z$ by the isomorphism.

\begin{lem}\label{commute isom from DRG to CP} There is an isomorphism $\Pi_X: C^*( \Gamma(X, \sigma))   \to  C_0(X)\rtimes \mathbb{F} $  such that $\Pi_X(C_0(X))=C_0(X)$ and
$\Pi_X \circ \gamma^X_z = \rho^X_z \circ \Pi_X$ for all $z \in \T$. Moreover,  $\Pi_X^{-1}(C_0(X))=C_0(X)$ and $\gamma^X_{z} \circ \Pi_X^{-1} =\Pi_X^{-1} \circ  \rho^X_{z}$ for all $z \in \T$.
\end{lem}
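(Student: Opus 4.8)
The plan is to construct $\Pi_X$ explicitly on the dense subalgebra $C_c(\Gamma(X,\sigma))$ by composing two maps: first transport $C^*(\Gamma(X,\sigma))$ to $C^*(\mathbb{F}\ltimes X)$ via the isomorphism $\Xi$ of Theorem~\ref{thm: from PA to DRS}, which induces $\Xi_*:C^*(\Gamma(X,\sigma))\to C^*(\mathbb{F}\ltimes X)$ by $\Xi_*(f)=f\circ\Xi^{-1}$; then use the canonical identification of $C^*(\mathbb{F}\ltimes X)$ with the full crossed product $C_0(X)\rtimes\mathbb{F}$ for the partial action (this is standard, see \cite{ExelBook}; note both algebras are nuclear here, so full and reduced coincide by amenability of $\mathbb{F}\ltimes X$, which follows since $\Gamma(X,\sigma)$ is amenable). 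Call this second isomorphism $\kappa_X$, and set $\Pi_X=\kappa_X\circ\Xi_*$. Since $\Xi$ restricts to a homeomorphism of unit spaces equal to the identity on $X$, and $\kappa_X$ carries $C_0(\mathbb{F}\ltimes X^{(0)})=C_0(X)$ onto the copy of $C_0(X)$ inside $C_0(X)\rtimes\mathbb{F}$, we get $\Pi_X(C_0(X))=C_0(X)$ immediately.

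The substantive point is the intertwining of the circle actions. First I would describe $\gamma^X$ transported through $\Xi_*$: since $\Xi(\alpha\beta^{-1},x)=(\varphi_{\alpha\beta^{-1}}(x),|\alpha|-|\beta|,x)$, the cocycle $(x,n,y)\mapsto n$ on $\Gamma(X,\sigma)$ pulls back to the cocycle $\mathbb{F}\ltimes X\to\mathbb{Z}$ sending $(\alpha\beta^{-1},x)\mapsto|\alpha|-|\beta|$, i.e.\ the length cocycle $g\mapsto\deg(g)$ where $\deg(\alpha\beta^{-1})=|\alpha|-|\beta|$ for $\alpha\beta^{-1}$ in reduced form with $\alpha,\beta\in\mathbb{F}_+$. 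Hence $\Xi_*$ intertwines $\gamma^X_z$ with the gauge-type action on $C^*(\mathbb{F}\ltimes X)$ that multiplies a function supported on $\{g\}\times U_{g^{-1}}$ by $z^{\deg(g)}$. Under $\kappa_X$, a function on $\{g\}\times U_{g^{-1}}$ corresponds to an element of the form $f\delta_g$ with $f\in C_0(U_g)$; for $g=\alpha$ a generator this is $f\delta_\alpha$ with $\deg(\alpha)=1$, and for $g=\emptyset$ it is $f\delta_\emptyset$ with $\deg(\emptyset)=0$. So the transported action multiplies $f\delta_\alpha$ by $z$ and fixes $f\delta_\emptyset$ — which is exactly the defining formula for $\rho^X_z$. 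Since $C_0(U_\alpha)\delta_\alpha$ (for $\alpha\in\CA$) and $C_0(U_\emptyset)\delta_\emptyset$ generate $C_0(X)\rtimes\mathbb{F}$ (by semi-saturatedness, \cite[Proposition~4.10]{ExelBook}), and both $\Pi_X\circ\gamma^X_z$ and $\rho^X_z\circ\Pi_X$ are $*$-homomorphisms agreeing on these generators, they coincide. The statement for $\Pi_X^{-1}$ follows by applying $\Pi_X^{-1}$ on both sides of $\Pi_X\circ\gamma^X_z=\rho^X_z\circ\Pi_X$.

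The main obstacle I anticipate is bookkeeping: one must be careful that the homeomorphism $\Xi$ really induces a well-defined $*$-isomorphism at the level of $C^*$-algebras (continuity plus the convolution/involution identities), and that the identification $C^*(\mathbb{F}\ltimes X)\cong C_0(X)\rtimes\mathbb{F}$ genuinely sends the $\mathbb{Z}$-grading by $\deg$ to the grading by generators in the sense used to define $\rho^X$ — in particular that a function supported on $\{\alpha\beta^{-1}\}\times U_{\beta\alpha^{-1}}$ decomposes correctly as $f\delta_{\alpha\beta^{-1}}$ and that $\rho^X$ acts on such an element by $z^{|\alpha|-|\beta|}$, which one checks using $\rho^X_z(f\delta_\alpha)=zf\delta_\alpha$, multiplicativity of $\rho^X_z$, and $\rho^X_z(f\delta_{\beta^{-1}})=\rho^X_z((\,\overline{f'}\delta_\beta)^*)=z^{-1}(\cdot)$. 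Once these identifications are nailed down, checking the intertwining reduces to evaluating both actions on the generators $f\delta_\alpha$ and $f\delta_\emptyset$, which is immediate.
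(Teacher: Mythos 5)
Your proposal is correct and takes essentially the same route as the paper, which also defines $\Pi_X=\Psi\circ\widehat{\Xi}$ where $\widehat{\Xi}(f)=f\circ\Xi$ transports along the groupoid isomorphism of Theorem~\ref{thm: from PA to DRS} and $\Psi$ is the canonical isomorphism $C^*(\mathbb{F}\ltimes X)\to C_0(X)\rtimes\mathbb{F}$ from \cite[Proposition 2.2]{Li2017}, and then verifies the intertwining by computing both sides on $C_c(\Gamma(X,\sigma))$ using that $\rho^X_z$ multiplies the $\delta_{\alpha\beta^{-1}}$-component by $z^{|\alpha|-|\beta|}$ (your reduction to the generators $f\delta_\alpha$, $f\delta_{e}$ is the same computation packaged slightly differently). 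One trivial notational slip: since $\Xi:\mathbb{F}\ltimes X\to\Gamma(X,\sigma)$, the induced map $C^*(\Gamma(X,\sigma))\to C^*(\mathbb{F}\ltimes X)$ is $f\mapsto f\circ\Xi$, not $f\mapsto f\circ\Xi^{-1}$.
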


\begin{proof} By Theorem \ref{thm: from PA to DRS}, there is an isomorphism $\widehat{\Xi}: C^*(\Gamma(X, \sigma)) \to C^*(\mathbb{F} \ltimes X)$ given by $\widehat{\Xi}(f)=f \circ \Xi$ for $f \in C_c(\Gamma(X,\sigma))$. Also, by \cite[Proposition 2.2]{Li2017}, there is an isomorphism $\Psi: C^*(\mathbb{F} \ltimes X) \to   C_0(X)\rtimes \mathbb{F} $ given by $\Psi(f)=\sum_{g} f(g, g^{-1}.U)\delta_g$, where $f(g, g^{-1}.U): U_g \to \C$ is defined by $x \mapsto f(g, g^{-1}.x)$, for $f \in C_c(\mathbb{F} \ltimes X)$. Then, we have an isomorphism $\Psi \circ \widehat{\Xi}  : C^*(\Gamma(X, \sigma)) \to  C_0(X)\rtimes \mathbb{F}$ given by $\Psi \circ \widehat{\Xi} (f)=\Psi(f \circ \Xi)=\sum_{g} (f \circ \Xi)(g, g^{-1}.U) \delta_g $. Clearly, $\Psi \circ \widehat{\Xi} (C_0(X))=C_0(X)$. We claim that $(\Psi \circ \widehat{\Xi}) \circ \gamma^X_z = \rho^X_z \circ (\Psi \circ \widehat{\Xi})$ for all $z \in \T$. Fix $z \in \T $. 
For $f \in C_c(\Gamma(X, \sigma))$, we have $$((\Psi \circ \widehat{\Xi}) \circ \gamma^X_z)(f)=\psi( \widehat{\Xi}(\gamma^X_z(f)))=\sum_{\af\bt^{-1} \in\mathbb{F}}  \widehat{\Xi}(\gamma^X_z(f))(\af\bt^{-1}, \bt\af^{-1}.U)\delta_{\af\bt^{-1}},$$
where 
\begin{align*}  \widehat{\Xi}(\gamma^X_z(f))(\af\bt^{-1}, \bt\af^{-1}.x)
&=(\gamma^X_z(f) \circ \Xi) (\af\bt^{-1}, \bt\af^{-1}.x)\\&=\gamma^X_z(f)(\varphi_{\af\bt^{-1}}(\bt\af^{-1}.x), |\af|-|\bt|, \bt\af^{-1}.x)\\
&=z^{|\af|-|\bt|}f(x, |\af|-|\bt|, \bt\af^{-1}.x)
\end{align*}
for $x \in U_{\af\bt^{-1}}$. 
On the other hand,  since 
$$ (\Psi \circ \widehat{\Xi} )(f)=\sum_{\af\bt^{-1} \in\mathbb{F}} \widehat{\Xi}(f)(\af\bt^{-1}, \bt\af^{-1}.U)\delta_{\af\bt^{-1}}$$ for $f \in C_c(\Gamma(X, \sigma))$, we have 
\begin{align*}\rho^X_z((\Psi \circ \widehat{\Xi})(f))
&=\sum_{\af\bt^{-1} \in\mathbb{F}} \rho^X_z \big( \widehat{\Xi}(f)(\af\bt^{-1}, \bt\af^{-1}.U)\delta_{\af\bt^{-1}}\big) \\
& =\sum_{\af\bt^{-1} \in\mathbb{F}} z^{|\af|-|\bt|}\big( \widehat{\Xi}(f)(\af\bt^{-1}, \bt\af^{-1}.U)\big)\delta_{\af\bt^{-1}},
\end{align*}
where  \begin{align*} \widehat{\Xi}(f)(\af\bt^{-1}, \bt\af^{-1}.x)&=(f \circ \Xi)(\af\bt^{-1}, \bt\af^{-1}.x)\\
&=f(\varphi_{\af\bt^{-1}}(\bt\af^{-1}.x), |\af|-|\bt|, \bt\af^{-1}.x) \\
&=f(x, |\af|-|\bt|, \bt\af^{-1}.x)
\end{align*}
for each $x \in U_{\af\bt^{-1}}$.
Thus, $(\Psi \circ \widehat{\Xi}) \circ \gamma^X_z = \rho^X_z \circ (\Psi \circ \widehat{\Xi})$ for all $z \in \T$.

Put $\Pi_X=\Psi \circ \widehat{\Xi}$.  From the above equation,  we then have $\Pi_X \circ \gamma^X_{\bar{z}} = \sm^X_{\bar{z}} \circ \Pi_X$ for all $z \in \T$. Then, $$\gamma^X_{z} \circ \Pi_X^{-1} =(\Pi_X \circ \gamma^X_{\bar{z}})^{-1} = (\sm^X_{\bar{z}} \circ \Pi_X)^{-1}=\Pi_X^{-1} \circ  \sm^X_{z}$$
 for all $z \in \T$. Lastly, it is clear that $\Pi_X^{-1}(C_0(X))=C_0(X)$. So, we are done.  
\end{proof}

Given  a free group $\mathbb{F}$ generated by a set $\CA$, we define the cocycle
 $\ell_{\mathbb{F}}:\mathbb{F}\to\mathbb{Z}$ to be the group homomorphism such that $\ell_{\mathbb{F}}(a)=1$ for every generator $a \in \CA$.

\begin{dfn}\label{dfn:evetually conjugay of PDS}  Let $\mathbb{F}\curvearrowright X$ and $\mathbb{F}'\curvearrowright Y$ be orthogonal semi-saturated partial dynamical systems.  We say that $\mathbb{F}\curvearrowright X$ and $\mathbb{F}'\curvearrowright Y$  are {\it eventually conjugate} if there is an isomorphism $(\phi,a)$ between $\mathbb{F}\curvearrowright X$ and $\mathbb{F}'\curvearrowright Y$  such that $\ell_{\mathbb{F}'}(a(g,x))=\ell_{\mathbb{F}}(g)$ for all $g\in \mathbb{F}$ and $x\in X$ such that $x\in U_{g^{-1}}$.
\end{dfn}

In the next theorem, we characterize eventual conjugacy in terms of the transformation groupoids, the crossed products, and the corresponding Deaconu-Renault systems.

 We recall that for a Deaconu-Renault system $(X, \sigma)$, a continuous cocycle $c_X: \Gamma(X, \sigma) \to \Z$ is defined by $c_X(x,n,y)=n$.

\begin{cor}\label{equivalences ec} Let $\mathbb{F}\curvearrowright X$ and $\mathbb{F}'\curvearrowright Y$ be orthogonal semi-saturated partial dynamical systems  and $(X,\sigma)$, $(Y,\tau)$ the corresponding Deaconu-Renault systems.  Then the following are equivalent:
\begin{enumerate}
\item $\mathbb{F}\curvearrowright X$ and $\mathbb{F}'\curvearrowright Y$  are eventually conjugate.
\item There is an isomorphism $\Theta: \mathbb{F} \ltimes X \to \mathbb{F}' \ltimes Y$, such that $\ell_{\mathbb{F}'}(\pi_1(\Theta(g,x)))=\ell_{\mathbb{F}}(g)$ for all $(g,x)\in \mathbb{F} \ltimes X$, where $\pi_1$ is the projection in the first coordinate.
\item There exists a continuous orbit equivalence $(\phi,a,b)$ between $\mathbb{F}\curvearrowright X$ and $\mathbb{F}'\curvearrowright Y$  such that $a$ is a cocycle, $(\phi,a)$ preserves essential stabilizers and $\ell_{\mathbb{F}'}(a(g,x))=\ell_{\mathbb{F}}(g)$ for all $g\in \mathbb{F}$ and $x\in X$ such that $x\in U_{g^{-1}}$.
\item There is an isomorphism $\Phi: C_0(X)\rtimes \mathbb{F} \to C_0(Y)\rtimes \mathbb{F}' $ such that $\Phi(C_0(X))=C_0(Y)$ and $\Phi \circ \rho_z^X=\rho_z^Y \circ \Phi$.
\item $(X, \sigma)$ and $(Y, \tau)$ are eventually conjugate.
\item There is an isomorphism $\Theta':\Gamma(X, \sigma)\to \Gamma(Y, \tau)$ such that $c_X=c_Y\circ\Theta'$.
\item There is an isomorphism $\Phi':C^*( \Gamma(X, \sigma)) \to C^*(\Gamma(Y, \tau))$ such that $\Phi'(C_0(X))=C_0(Y)$ and $\Phi' \circ \gamma_z^X=\gamma_z^Y \circ \Phi'$.
\end{enumerate}
\end{cor}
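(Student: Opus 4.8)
The plan is to obtain every one of the seven equivalences by layering a single ``grading'' condition on top of Corollary~\ref{isom-equivalences:oss}, reusing the isomorphisms already built in this section. The implications I would establish are: (1)$\Leftrightarrow$(2) via the fully faithful functor $F$ of Theorem~\ref{thm: functor}; (1)$\Leftrightarrow$(3) by inspecting the proof of Theorem~\ref{isom-equivalences} and noting the cocycle $a$ is never altered there; (2)$\Leftrightarrow$(6) by conjugating with the isomorphisms $\Xi$ of Theorem~\ref{thm: from PA to DRS}; (4)$\Leftrightarrow$(7) by conjugating with the isomorphisms $\Pi$ of Lemma~\ref{commute isom from DRG to CP}; and (5)$\Leftrightarrow$(6)$\Leftrightarrow$(7) by invoking the eventual-conjugacy rigidity theorem for Deaconu--Renault systems from \cite{CRST}. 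Since (3)$\Leftrightarrow$(1)$\Leftrightarrow$(2)$\Leftrightarrow$(6)$\Leftrightarrow$(5) and (6)$\Leftrightarrow$(7)$\Leftrightarrow$(4), these implications tie all seven statements together.

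For (1)$\Leftrightarrow$(2): recall from Theorem~\ref{thm: functor} that $F$ sends an orbit morphism $(\phi,a)$ to $\Theta_{(\phi,a)}$ with $\pi_1(\Theta_{(\phi,a)}(g,x))=a(g,x)$, and that its inverse sends $\Theta$ to $(\phi_\Theta,a_\Theta)$ with $a_\Theta=\pi_1\circ\Theta$; in both directions the condition ``$\ell_{\mathbb{F}'}(a(g,x))=\ell_{\mathbb{F}}(g)$'' and the condition ``$\ell_{\mathbb{F}'}(\pi_1(\Theta(g,x)))=\ell_{\mathbb{F}}(g)$'' literally coincide. As a fully faithful functor reflects isomorphisms, an isomorphism in $\PDS$ satisfying Definition~\ref{dfn:evetually conjugay of PDS} corresponds to an isomorphism of topological groupoids satisfying the condition in (2), and conversely; one also checks via Lemma~\ref{isom pds} that the grading condition is automatically inherited by the inverse orbit morphism, so there is no asymmetry to worry about. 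For (1)$\Leftrightarrow$(3): by Corollary~\ref{isom-equivalences:oss}, (1) without the grading is equivalent to (3) without the grading; moreover, in the proof of Theorem~\ref{isom-equivalences} the continuous orbit equivalence produced from an isomorphism $(\phi,a)$ uses the \emph{same} cocycle $a$, and the isomorphism produced from a stabiliser-preserving continuous orbit equivalence is $(\phi,a)$ itself, so the identity $\ell_{\mathbb{F}'}\circ a=\ell_{\mathbb{F}}\circ\pi_1$ transfers verbatim in either direction.

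For (2)$\Leftrightarrow$(6): Theorem~\ref{thm: from PA to DRS} gives $\Xi_X(\alpha\beta^{-1},x)=(\varphi_{\alpha\beta^{-1}}(x),|\alpha|-|\beta|,x)$ for $\alpha\beta^{-1}$ in reduced form with $\alpha,\beta\in\mathbb{F}_+$, and since $\ell_{\mathbb{F}}(\alpha\beta^{-1})=|\alpha|-|\beta|$ for such words, we get $c_X\circ\Xi_X=\ell_{\mathbb{F}}\circ\pi_1$ on $\mathbb{F}\ltimes X$ and likewise $c_Y\circ\Xi_Y=\ell_{\mathbb{F}'}\circ\pi_1$ on $\mathbb{F}'\ltimes Y$. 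Hence a groupoid isomorphism $\Theta$ as in (2) yields $\Theta':=\Xi_Y\circ\Theta\circ\Xi_X^{-1}$ with $c_Y\circ\Theta'=c_X$, and the converse is obtained by conjugating back. For (4)$\Leftrightarrow$(7): Lemma~\ref{commute isom from DRG to CP} provides diagonal-preserving isomorphisms $\Pi_X\colon C^*(\Gamma(X,\sigma))\to C_0(X)\rtimes\mathbb{F}$ with $\Pi_X\circ\gamma^X_z=\rho^X_z\circ\Pi_X$ (and similarly $\Pi_Y$), so an isomorphism $\Phi'$ as in (7) gives $\Phi:=\Pi_Y\circ\Phi'\circ\Pi_X^{-1}$ with $\Phi(C_0(X))=C_0(Y)$ and $\Phi\circ\rho^X_z=\rho^Y_z\circ\Phi$, and conversely.

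The remaining block (5)$\Leftrightarrow$(6)$\Leftrightarrow$(7) is the Deaconu--Renault counterpart of the statement we are proving: by Proposition~\ref{PA:torsion free and abelian} (transported through $\Xi$) the isotropy of $\Gamma(X,\sigma)$ and of $\Gamma(Y,\tau)$ is torsion-free and abelian, so the hypotheses of the eventual-conjugacy theorem in \cite{CRST} are met, and it gives exactly ``$(X,\sigma)$ and $(Y,\tau)$ eventually conjugate $\Leftrightarrow$ groupoid isomorphism intertwining $c_X$ and $c_Y$ $\Leftrightarrow$ diagonal-preserving C*-isomorphism intertwining $\gamma^X$ and $\gamma^Y$''; this is the same input used in Corollary~\ref{isom-equivalences:oss}. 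I do not expect a genuine obstacle here: all the analytic content is in the cited results, and the only real work is the bookkeeping that keeps the three ``gradings'' --- the length homomorphism $\ell_{\mathbb{F}}$, the canonical $\Z$-valued cocycle $c_X$ on $\Gamma(X,\sigma)$, and the circle actions $\rho^X,\gamma^X$ --- compatible along the isomorphisms $\Xi$ and $\Pi_X$; the most delicate (still routine) point is the identity $c_X\circ\Xi_X=\ell_{\mathbb{F}}\circ\pi_1$, which relies on writing group elements as reduced words $\alpha\beta^{-1}$ with $\alpha,\beta$ positive.
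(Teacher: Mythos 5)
Your proposal is correct and follows essentially the same route as the paper: (1)$\Leftrightarrow$(2)$\Leftrightarrow$(3) from Theorem~\ref{isom-equivalences} plus the observation that the cocycle is carried along unchanged, (2)$\Leftrightarrow$(6) by conjugating with $\Xi$ using $c_X\circ\Xi=\ell_{\mathbb{F}}\circ\pi_1$, (4)$\Leftrightarrow$(7) by conjugating with the $\Pi$'s of Lemma~\ref{commute isom from DRG to CP}, and (5)$\Leftrightarrow$(6)$\Leftrightarrow$(7) from \cite[Theorem~8.10]{CRST}. The only difference is that you make explicit the identity $c_X\circ\Xi=\ell_{\mathbb{F}}\circ\pi_1$ and the inheritance of the grading by the inverse morphism, which the paper leaves implicit in its computation.
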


\begin{proof}

(1)$\iff$(2)$\iff$(3):
It follows from Theorem \ref{isom-equivalences} and Definition \ref{dfn:evetually conjugay of PDS}.

(2)$\implies$(6): Choose $(x,k,y)\in\Gamma(X,\sigma)$, where  $x\in\operatorname{dom}(\sigma^n)$, $y\in\operatorname{dom}(\sigma^m)$, $k=n-m$ and $\sigma^n(x)=\sigma^m(y)$ for some $n,m\in\mathbb{N}$. There then exist $\alpha,\beta\in\mathbb{F}_+$ such that $|\alpha|=n$, $|\beta|=m$, $x\in U_{\alpha}$ and $y\in U_{\beta}$. Assume that the reduced form of $\alpha\beta^{-1}$ is $\alpha'\beta'^{-1}$, so that there exists $\delta\in\mathbb{F}_+$ such that $\alpha=\alpha'\delta$ and $\beta=\beta'\delta$. We then define $\Theta': \Gamma(X, \sigma)\to \Gamma(Y, \tau)$ by \begin{align*}\Theta'(x,k,y)&=\Xi\circ \Theta \circ \Xi^{-1}(x,k,y) \\
&=\Xi \circ \Theta (\af'\bt'^{-1},y)\\
&=\Xi(a(\af'\bt'^{-1}), \phi(y))\\
&=((\af'\bt'^{-1}). \phi(y), |\af'|-|\bt'|, \phi(y))
\end{align*} for $(x,k,y) \in \Gamma(X, \sigma)$.
Observe that $$c_X(x,k,y)=|\af|-|\bt|=|\af'|-|\bt'|=c_Y \circ \Theta'(x,k,y)$$ for $(x,k,y) \in \Gamma(X, \sigma)$. Thus, $\Theta':\Gamma(X, \sigma)\to \Gamma(Y, \tau)$ is an isomorphism such that $c_X=c_Y\circ\Theta'$.

(6)$\implies$(2): Define $\Theta: \mathbb{F} \ltimes X \to \mathbb{F}' \ltimes Y$  by $\Theta=\Xi^{-1} \circ \Theta \circ \Xi$. Then, $\Theta$ is an isomorphism such that
\begin{align*} 
\Theta(\af\bt^{-1},x)&=\Xi^{-1} \circ \Theta \circ \Xi(\af\bt^{-1},x) \\
&=\Xi^{-1} \circ \Theta'(\varphi_{\af\bt^{-1}(x)}, |\af|-|\bt|, x)
\end{align*} for $(\af\bt^{-1},x) \in \mathbb{F} \ltimes X$. 
 Since $c_X=c_Y\circ\Theta'$, one sees that $l_{\mathbb{F}'}(\pi_1(\Theta(\af\bt^{-1}, x)))=l_{\mathbb{F}}(\af\bt^{-1},x)$ for $(\af\bt^{-1},x) \in \mathbb{F} \ltimes X$.
 
 (4)$\implies$(7): By Lemma \ref{commute isom from DRG to CP}, we have 
\begin{align*}       ( \Pi_Y^{-1}\circ\Phi\circ\Pi_X )\circ \gamma^X_z
 &=   ( \Pi_Y^{-1}\circ\Phi )\circ  (\Pi_X \circ \gamma^X_z )\\
 &=( \Pi_Y^{-1}\circ\Phi )\circ  ( \sm^X_z \circ  \Pi_X)\\
 &= \Pi_Y^{-1}\circ (\Phi \circ   \sm^X_z ) \circ  \Pi_X \\
 &=\Pi_Y^{-1}\circ (\sm_z^Y \circ \Phi) \circ  \Pi_X \\
 &=(\Pi_Y^{-1}\circ \sm_z^Y ) \circ (\Phi \circ  \Pi_X) \\
  &=( \gamma^Y_z \circ \Pi_Y^{-1} ) \circ (\Phi \circ  \Pi_X) \\
   &= \gamma^Y_z \circ  (\Pi_Y^{-1}  \circ \Phi \circ  \Pi_X )
\end{align*} 
for all $z \in \T$. Also, $( \Pi_Y^{-1}\circ\Phi\circ\Pi_X )(C_0(X))=C_0(Y)$. 

(7)$\implies$(4) : It is analogous to (4)$\implies$(7).

(5)$\iff$(6)$\iff$(7): It follows from \cite[Theorem 8.10]{CRST}.
\end{proof}

\section{An application to generalized Boolean dynamical systems}\label{Sec 5}

A \emph{Boolean algebra}  $\CB$ is a relatively complemented distributive lattice with the least element $\emptyset$.  This type of Boolean algebra is sometimes referred to as a \emph{generalized Boolean algebra}.
For $A, B \in \CB$, the {\it meet} of $A$ and $B$ is denoted by $A \cap B$, the {\it join} of $A$ and $B$ is denoted by $A \cup B$, and the 
{\it relative complement} of $A$ relative to  $B$ is denoted by 
$B \setminus A$.   The Boolean algebra $\CB$ is called {\em unital} if there exists $1 \in \CB$ such that $1 \cup A = 1$ and $1 \cap A=A$ for all $A \in \CB$. 
The partial order is defined by 
$$ A\subseteq B ~\text{if and only if}~ A \cap B =A$$
for $A, B \in \CB$. 
We say that   $A$ is  a  \emph{subset of} $B$ if $A \subseteq B$. Given $X,Y \subseteq \CB$, we write
$$\uparrow X:=\{B \in \CB: A \subseteq B ~\text{for some}~A \in X\}$$
and $\uparrow_Y X:= Y \cap \uparrow X$.

  A non-empty subset $\CI\subseteq \CB$ is called an {\it ideal} if 
  $A \cup B \in \CI$ whenever $A, B \in \CI$, and it is a lower set, that is, if $A \in \CI$ and $B \subseteq A$, then $B \in \CI$. 
   Every ideal of $\CB$ is again a Boolean algebra. 
     A non-empty subset $\eta \subseteq \CB$ is called a {\em filter} if $\emptyset \notin \eta$, $A \cap B \in \eta$ whenever $A,B \in \eta$, and it is an  upper set, that is, if $ A \in \eta$ and  $A \subseteq B$, then $B \in \eta$. 
 An {\it ultrafilter} is a filter that is maximal in the set of filters with respect to inclusion.
 We denote  the set of all ultrafilters of $\CB$ by $\widehat{\CB}$. 
  For $A\in\CB$, let $Z(A):=\{\eta\in\widehat{\CB}:A\in\eta\}$. We equip $\widehat{\CB}$ with the topology generated by $\{Z(A): A\in\CB\}$. Then, 
$\widehat{\CB}$
  is a totally disconnected, locally compact Hausdorff space in which each 
$Z(A)$ is compact and open.

  A {\em generalized Boolean dynamical system} is defined as a quadruple $(\CB, \CL, \theta, \CI_\af)$, where $(\CB, \CL, \theta)$ forms a Boolean dynamical system, and $\{\CI_\af\}_{\af \in \CL}$ represents a family of ideals in $\CB$ such that $\theta_\af(\CB) \subseteq \CI_\af$ for each $\af \in \CL$. When we need to specify the Boolean dynamical system $(\CB, \CL, \theta)$, we denote $\CI_\af$ as $\CI_\af^{(\CB, \CL, \theta)}$.  We refer the reader to \cite{CaK2, CasK1, CasK2} for  more details.

Let $(\CB, \CL,\theta, \CI_\af)$ be a generalized Boolean dynamical system and let
  \begin{align*}
 E^0&:=\widehat{\CB}, \\
 F^0&:=\widehat{\CB} \cup\{\emptyset\}, \\
 E^1&:=\bigl\{e^\alpha_\eta:\alpha\in\CL ~\text{and}~\eta\in \widehat{\CI_\alpha}\bigr\}.
 \end{align*}
We 
 equip $E^0$ with the topology given by the basis $\{Z(A):A\in\CB\}$ and 
   $E^1$ with the topology generated by 
$\bigcup_{\alpha\in\CL} \{Z^1(\af, B):B\in\CI_\alpha\}, $
where $$Z^1(\af, B):=\{e^\af_\eta: \eta \in \widehat{\CI_\alpha}  , B \in \eta\}.$$  Note that $E^1_{(\CB,\CL,\theta,\CI_\alpha)}$ is homeomorphic to the disjoint union of the family $\{\widehat{\CI_\alpha}\}_{\af\in\CL}$, where we 
equip $\widehat{\CI_\alpha}$ with the topology generated by $\{Z(\af, A): A\in\CI_\af\}$, where 
  we let $Z(\af, A):=\{\eta \in \widehat{\CI_\alpha}: A \in \eta\}$ for $A \in \CI_\af$. 
We also  equip  $F^0$  with a suitable topology;  if $\CB$ possesses a unit element, the topology is such that $\{\emptyset\}$ is treated as an isolated point.
  If $\CB$ does not have a unit element, $\emptyset$ plays the role of infinity in the one-point compactification of $X_\emptyset$.

For each $\alpha \in \CL$, we define $\widehat{\theta}_\alpha:\widehat{\CI_\alpha} \to \widehat{\CB} \cup\{\emptyset\}$ by $$\widehat{\theta}_\alpha(\eta)=\{A \in \CB:\theta_{\alpha}(A) \in \eta\}.$$
Note that each $\widehat{\theta}_\alpha$ is continuous (\cite[Lemma 2.3]{CasK1}).
We  define the maps
 $d:E^1\to E^0$ and $r:E^1\to F^0$ by 
\begin{align*}
	d(e^\alpha_\eta)&=\uparrow_{\CB} \eta =\{B \in \CB: A \subseteq B ~\text{for some}~  A \in \eta\} \text{ and } \\ r(e^\alpha_\eta)&=\widehat{\theta}_{\alpha}(\eta)=\{A \in \CB:\theta_{\alpha}(A) \in \eta\}.
\end{align*}
 Then, $(E^1, d,r)$ is a topological correspondence from $E^0$ to $F^0$ (\cite[Proposition 7.1]{CasK1}).
Given $n \geq 2$,  we define the space of paths of length $n$ as
\[E^{n}:=\{(e^{\af_1}_{\eta_1},\ldots,e^{\af_n}_{\eta_n}) \in \prod_{i=1}^n E^1:d(e^{\af_i}_{\eta_i})=r(e^{\af_{i+1}}_{\eta_{i+1}}) ~\text{for}~1\leq i<n\},\]
equipped with the subspace topology inherited from 
 the product space 
  $\prod_{i=1}^n E^1$.  
We define the {\it finite path space} as 
 $E^*:= \sqcup_{n=0}^{\infty} E^n$, equipped with the disjoint union topology. The {\it infinite path space} is defined by
\[E^{\infty}:=\{(e^{\af_i}_{\eta_i})_{i\in\mathbb{N}}\in \prod_{i=1}^\infty E^1:d(e^{\af_i}_{\eta_i})=r(e^{\af_{i+1}}_{\eta_{i+1}}) ~\text{for}~ i\in\mathbb{N}\}.\]
 The {\it boundary path space} of $E$ is defined by 
$$\partial E :=E^{\infty} \sqcup \{(e_k)_{k=1}^n \in E^* : d((e_k)_{k=1}^n )  \in E^0_{sg}\}.$$
We denote by $\sm_E:\partial E\setminus E^0_{sg}\to \partial E$ the shift map that removes the first edge for paths of length greater or equal to 2. For elements $\mu$ of length 1, $\sm_E(\mu)=d(\mu)$.
For a subset $S \subset E^*$, denote by 
$$\CZ(S)=\{\mu \in \partial E:~\text{either} ~r(\mu) \in S, ~\text{or there exists}~  1 \leq i \leq |\mu| ~\text{such that}~ \mu_1 \cdots \mu_i \in 
S \}.$$
We endow $\partial E$ with the topology generated by the basic open sets $\CZ(U)\cap \CZ(K)^c$, where $U$ is an open set of $E^*$ and $K$ is a compact set of $E^*$.
Note that $\partial E$ is a locally compact Hausdorff space and that $\sm_E$ is a local homeomorphism. So, we have a Deaconu-Renault system $(\partial E, \sm_E)$.

Given a generalized Boolean dynamical system $(\CB, \CL,\theta, \CI_\af)$, we let  $\mathbb{F}_\CL$ be the free group generated by $\CL$. It then is known that there is a semi-saturated orthogonal  partial action  $\varphi:=(\{U_t\}_{t \in \mathbb{F}}, \{\varphi_t\}_{t \in \mathbb{F}_\CL})$ of  $\mathbb{F}_\CL$ on $\partial E$ (\cite[Proposition 3.6]{CasK1}). 

	 A 
	\emph{$(\CB,\CL,\theta, \CI_\af)$-representation}  (\cite[Definition 3.3]{CaK2}) in a $C^*$-algebra $A $
		is   families of projections $\{P_A:A\in\mathcal{B}\}$ and  partial isometries $\{S_{\alpha,B}:\alpha\in\mathcal{L},\ B\in\mathcal{I}_\alpha\}$ 
	that satisfy  
	\begin{enumerate}
		\item[(i)] $P_\emptyset=0$, $P_{A\cap A'}=p_Ap_{A'}$, and $P_{A\cup A'}=P_A+P_{A'}-P_{A\cap A'}$ for $A,A'\in\mathcal{B}$;
		\item[(ii)] $P_AS_{\alpha,B}=S_{\alpha,  B}P_{\theta_\af(A)}$  for $A\in\mathcal{B}$, $\alpha \in\mathcal{L}$ and $B\in\mathcal{I}_\alpha$;
		\item[(iii)] $S_{\alpha,B}^*S_{\alpha',B'}=\delta_{\alpha,\alpha'}P_{B\cap B'}$ for  $\alpha,\alpha'\in\mathcal{L}$, $B\in\mathcal{I}_\alpha$ and $B'\in\mathcal{I}_{\alpha'}$;
		\item[(iv)] $P_A=\sum_{\af \in\Delta_A}S_{\af,\theta_\af(A)}S_{\af,\theta_\af(A)}^*$ for   $A\in \mathcal{B}_{reg}$. 
	\end{enumerate}
It is established in \cite[Theorem 5.5]{CaK2} that for any given generalized Boolean dynamical system $(\CB, \CL, \theta, \CI_\alpha)$, there exists a universal $(\CB, \CL, \theta, \CI_\alpha)$-representation $\{p_A, s_{\alpha,B} : A \in \CB, \alpha \in \CL, \text{ and } B \in \CI_\alpha\}$.
We denote by $C^*(\CB, \CL,\theta, \CI_\af)$ the $C^*$-algebra generated by a universal 
$(\CB,\CL,\theta, \CI_\af)$-representation $\{p_A, s_{\af,B}\}$.

The {\it diagonal subalgebra $D(\CB, \CL,\theta, \CI_\af)$} of $C^*(\CB, \CL,\theta, \CI_\af)$ is defined as the subalgebra of $C^*(\CB, \CL,\theta, \CI_\af)$ generated by the commuting projections $s_{\af,A}s_{\af,A}^*$:
$$D(\CB, \CL,\theta, \CI_\af)
=C^*(\{s_{\af,A}s_{\af,A}^*: \af \in \CL^* ~\text{and}~ A \in \CI_\af\}), $$
which can also be expressed as
$$D(\CB, \CL,\theta, \CI_\af)=\overline{span}\{s_{\af,A}s_{\af,A}^*: \af \in \CL^* ~\text{and}~ A \in \CI_\af\}.$$
We then have the following.

\begin{thm}\label{thm1}Let $(\CB, \CL,\theta, \CI_\af^{(\CB, \CL,\theta)})$ and $(\CB', \CF, \theta', \CI_a^{(\CB', \CF, \theta')})$ be generalized Boolean dynamical systems such that $\CB$, $\CL$, $\CB'$ and $\CF$ are countable. Let $\mathbb{F}_{\CL} \curvearrowright \partial E$ and $\mathbb{F}_{\CF} \curvearrowright \partial F$ be the corresponding partial dynamical systems,  $(\partial E, \sm_E)$ and $(\partial F, \sm_F)$ be the corresponding Deaconu-Renault systems. Then the following are equivalent:
\begin{enumerate}
\item $\mathbb{F}_{\CL} \curvearrowright \partial E$ and $\mathbb{F}_{\CF} \curvearrowright \partial F$   are isomorphic in $\PDS$.
\item $ \mathbb{F}_{\CL} \ltimes \partial E$ and $ \mathbb{F}_{\CF} \ltimes \partial F$ are isomorphic as topological groupoids.
\item There exists a continuous orbit equivalence $(\phi,a,b)$ between $\mathbb{F}_{\CL} \curvearrowright \partial E$ and $\mathbb{F}_{\CF} \curvearrowright \partial F$  such that $a$ is a cocycle and $(\phi,a)$ preserves essential stabilisers.
\item There is an isomorphism $\Phi: C_0(\partial E)\rtimes \mathbb{F}_{\CL} \to C_0(\partial F)\rtimes \mathbb{F}_{\CF} $  such that $\Phi(C_0(\partial E))=C_0(\partial F)$.
\item There is an essential-stabiliser-preserving continuous orbit equivalence  from $(\partial E, \sm_E)$ to $(\partial F, \sm_F)$.
\item $\Gamma(\partial E, \sm_E)$ and $ \Gamma(\partial F, \sm_F)$ are isomorphic as topological groupoids.
\item There is an isomorphism $\Phi':C^*( \Gamma(\partial E, \sm_E)) \to C^*(\Gamma(\partial F, \sm_F))$ such that $\Phi'(C_0(\partial E))=C_0(\partial F)$.
\item There is an isomorphism $\pi: C^*(\CB, \CL,\theta, \CI_\af^{(\CB, \CL,\theta)}) \to C^*(\CB', \CF, \theta', \CI_a^{(\CB', \CF, \theta')})$ with $\pi(D(\CB, \CL,\theta, \CI_\af^{(\CB, \CL,\theta)}))=D(\CB', \CF, \theta', \CI_a^{(\CB', \CF, \theta')})$. 
\end{enumerate}
\end{thm}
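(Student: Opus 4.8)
The plan is to recognise that almost all of this theorem is a direct application of Corollary~\ref{isom-equivalences:oss}, and that only the equivalence with item~(8) requires an extra ingredient. First I would verify the standing hypotheses: by \cite[Proposition~3.6]{CasK1} (together with the construction of $(\partial E,\sm_E)$ recalled above) the pairs $\mathbb{F}_{\CL}\curvearrowright\partial E$ and $\mathbb{F}_{\CF}\curvearrowright\partial F$ are orthogonal semi-saturated partial dynamical systems over free groups, with associated Deaconu--Renault systems $(\partial E,\sm_E)$ and $(\partial F,\sm_F)$; the assumption that $\CB,\CL,\CB',\CF$ are countable is exactly what makes $\partial E$ and $\partial F$ second countable, so these systems fall under the standing assumptions of Section~\ref{preliminary}. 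Granting this, Corollary~\ref{isom-equivalences:oss} applied verbatim to the two systems yields the equivalences $(1)\iff(2)\iff(3)\iff(4)\iff(5)\iff(6)\iff(7)$ (the torsion-free abelian hypothesis needed there being automatic here by Proposition~\ref{PA:torsion free and abelian}).

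It then remains to splice~(8) into the chain, which I would do by establishing $(4)\iff(8)$ (equivalently $(7)\iff(8)$), using the known description of the C*-algebra of a generalized Boolean dynamical system as a partial crossed product, namely a $*$-isomorphism
\[C^*(\CB,\CL,\theta,\CI_\af^{(\CB,\CL,\theta)})\;\cong\;C_0(\partial E)\rtimes\mathbb{F}_{\CL}\]
(see \cite{CasK2}; alternatively the groupoid model $C^*(\Gamma(\partial E,\sm_E))$ as in \cite{CasK1}), and likewise for $(\CB',\CF,\theta',\CI_a^{(\CB',\CF,\theta')})$. The point that needs care is that this isomorphism is \emph{diagonal preserving}: it must carry $D(\CB,\CL,\theta,\CI_\af^{(\CB,\CL,\theta)})=\overline{\operatorname{span}}\{s_{\af,A}s_{\af,A}^*:\af\in\CL^*,\,A\in\CI_\af\}$ onto $C_0(\partial E)$. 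To check this I would trace each generator $s_{\af,A}s_{\af,A}^*$ through the isomorphism and identify it with the characteristic function of a compact open cylinder set $\CZ(U)\cap\CZ(K)^c$ of $\partial E$; since such cylinder sets form a basis for the topology of $\partial E$, their indicator functions generate $C_0(\partial E)$, which gives the claimed equality of subalgebras. Composing this diagonal-preserving isomorphism with its counterpart for the primed system then shows that a diagonal-preserving isomorphism as in~(8) exists if and only if one as in~(4) does, closing the loop.

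The only genuine obstacle I anticipate is the bookkeeping in that last step: pinning down the image of $D(\CB,\CL,\theta,\CI_\af^{(\CB,\CL,\theta)})$ under the partial crossed product isomorphism and matching it with $C_0(\partial E)$. Everything else is a formal invocation of Corollary~\ref{isom-equivalences:oss}. Should the cited reference already state the isomorphism $C^*(\CB,\CL,\theta,\CI_\af)\cong C_0(\partial E)\rtimes\mathbb{F}_{\CL}$ as carrying the diagonal onto $C_0(\partial E)$, then even this step is immediate and the theorem becomes a formal corollary of Corollary~\ref{isom-equivalences:oss}.
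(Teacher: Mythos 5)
Your proposal is correct and follows essentially the same route as the paper: the equivalence of (1)--(7) is a verbatim application of Corollary~\ref{isom-equivalences:oss}, and (8) is spliced in via the diagonal-preserving identification of $C^*(\CB,\CL,\theta,\CI_\af)$ with the groupoid/crossed-product model. The paper handles that last step exactly in the ``immediate'' scenario you anticipated, by citing \cite[Corollary 7.11]{CasK1} together with \cite[Theorem 3.5]{Kang} for the equivalence (7)$\iff$(8), so no further bookkeeping is needed.
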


\begin{proof}(1)-(7) are equivalent by Corollary\ref{isom-equivalences:oss}.  
(7)$\iff$(8) follows form  \cite[Corollary 7.11]{CasK1} and \cite[Theorem 3.5]{Kang}. 
\end{proof}

\begin{remark}
    In the context of Section~\ref{Sec 4}, if the topological space $X$ is a Stone space, and each $U_g$ is a clopen set, then, by \cite[Theorem~5.1]{CasK2} we can construct a generalized Boolean dynamical system such that the transformation groupoid of the original action is isomorphic to the transformation groupoids coming from the generalized Boolean dynamical system.
\end{remark}

By the universal property of $C^*(\CB,\CL,\theta,\CI_\alpha)=C^*(p_A, s_{\alpha,B})$, there is a strongly continuous action $\tau:\mathbb T\to {\rm Aut}(C^*(\CB,\CL,\theta, \CI_\alpha))$, which we call the {\it gauge action}, such that
\[
\tau_z(p_A)=p_A   \ \text{ and } \ \tau_z(s_{\alpha,B})=zs_{\alpha,B}
\]
for $A\in \CB$, $\alpha \in \CL$ and $B \in \CI_\alpha$. We say that an ideal $I$ of $C^*(\CB, \CL, \theta, \CI_\alpha)$ is \emph{gauge-invariant} if $\tau_z(I)=I$ for every $z\in\T$.

\begin{thm}\label{equivalences ec:GDBS} Let $(\CB, \CL,\theta, \CI_\af^{(\CB, \CL,\theta)})$ and $(\CB', \CF, \theta', \CI_a^{(\CB', \CF, \theta')})$ be generalized Boolean dynamical systems such that $\CB$, $\CL$, $\CB'$ and $\CF$ are countable. Let $\mathbb{F}_{\CL} \curvearrowright \partial E$ and $\mathbb{F}_{\CF} \curvearrowright \partial F$ be the corresponding partial dynamical systems,  $(\partial E, \sm_E)$ and $(\partial F, \sm_F)$ be the corresponding Deaconu-Renault systems. Then the following are equivalent:

\begin{enumerate}
\item $\mathbb{F}_{\CL} \curvearrowright \partial E$ and $\mathbb{F}_{\CF} \curvearrowright \partial F$   are  eventually conjugate.
\item There is an isomorphism $\Theta:  \mathbb{F}_{\CL} \ltimes \partial E \to \mathbb{F}_{\CF} \ltimes \partial F$, such that $\ell_{\mathbb{F}_{\CF}}(\pi_1(\Theta(g,x)))=\ell_{\mathbb{F}_{\CL}}(g)$ for all $(g,x)\in \mathbb{F}_{\CL} \ltimes \partial E$, where $\pi_1$ is the projection in the first coordinate.
\item There exists a continuous orbit equivalence $(\phi,a,b)$ between $\mathbb{F}_{\CL} \curvearrowright \partial E$ and $\mathbb{F}_{\CF} \curvearrowright \partial F$  such that $a$ is a cocycle, $(\phi,a)$ preserves essential stabilizers and $\ell_{\mathbb{F}_{\CF}}(a(g,x))=\ell_{\mathbb{F}_\CL}(g)$ for all $g\in \mathbb{F}_{\CL}$ and $x\in  \partial E$ such that $x\in U_{g^{-1}}$.
\item There is an isomorphism $\Phi: C_0(\partial E)\rtimes \mathbb{F}_{\CL} \to C_0(\partial F)\rtimes \mathbb{F}_{\CF} $  such that $\Phi(C_0(\partial E))=C_0(\partial F)$ and $\Phi \circ \rho_z^{\partial E}=\rho_z^{\partial F} \circ \Phi$.
\item $\Gamma(\partial E, \sm_E)$ and $ \Gamma(\partial F, \sm_F)$ are eventually conjugate.
\item There is an isomorphism $\Theta':\Gamma(\partial E, \sm_E)\to \Gamma(\partial F, \sm_F)$ such that $c_{\partial E}=c_{\partial F}\circ\Theta'$.
\item There is an isomorphism $\Phi':C^*( \Gamma(\partial E, \sm_E)) \to C^*(\Gamma(\partial F, \sm_F))$ such that $\Phi'(C_0(\partial E))=C_0(\partial F)$ and $\Phi' \circ \gamma_z^{\partial E}=\gamma_z^{\partial F}\circ \Phi'$.
\item There is an isomorphism $\pi: C^*(\CB, \CL,\theta, \CI_\af^{(\CB, \CL,\theta)}) \to C^*(\CB', \CF, \theta', \CI_a^{(\CB', \CF, \theta')})$ with $\pi(D(\CB, \CL,\theta, \CI_\af^{(\CB, \CL,\theta)}))=D(\CB', \CF, \theta', \CI_a^{(\CB', \CF, \theta')})$ and $\pi \circ \tau_z=\tau'_z \circ \pi$, where $\tau'$ is the gauge action on  $C^*(\CB', \CF, \theta', \CI_a^{(\CB', \CF, \theta')})$.
\end{enumerate}
\end{thm}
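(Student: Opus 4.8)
The plan is to read the equivalence of items (1)--(7) directly off the general theory of Section~\ref{Sec 4}, and then to splice in (8) by strengthening the argument used for Theorem~\ref{thm1} to a gauge-equivariant statement. By \cite[Proposition~3.6]{CasK1}, $\mathbb{F}_\CL\curvearrowright\partial E$ and $\mathbb{F}_\CF\curvearrowright\partial F$ are orthogonal semi-saturated partial dynamical systems over the free groups $\mathbb{F}_\CL$ and $\mathbb{F}_\CF$, and the Deaconu--Renault systems attached to them as in Section~\ref{Sec 4} are exactly $(\partial E,\sm_E)$ and $(\partial F,\sm_F)$; moreover the circle actions $\rho^{\partial E},\rho^{\partial F}$ and $\gamma^{\partial E},\gamma^{\partial F}$ appearing in the statement agree with the ones assigned to these partial systems and to their Deaconu--Renault systems in Section~\ref{Sec 4}, both being pinned down by scaling the canonical degree-one generators by $z$. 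Hence Corollary~\ref{equivalences ec}, applied with $X=\partial E$, $Y=\partial F$, $\mathbb{F}=\mathbb{F}_\CL$ and $\mathbb{F}'=\mathbb{F}_\CF$, yields at once the equivalence of (1)--(7).

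It remains to tie (8) to, say, (7). As in the proof of Theorem~\ref{thm1}, \cite[Corollary~7.11]{CasK1} together with \cite[Theorem~3.5]{Kang} furnishes a $*$-isomorphism $\kappa_E\colon C^*(\CB,\CL,\theta,\CI_\af^{(\CB,\CL,\theta)})\to C^*(\Gamma(\partial E,\sm_E))$ that restricts to an isomorphism $D(\CB,\CL,\theta,\CI_\af^{(\CB,\CL,\theta)})\to C_0(\partial E)$, and likewise an isomorphism $\kappa_F$ for the second system. The extra ingredient needed here is that $\kappa_E$ intertwines the gauge action $\tau$ with $\gamma^{\partial E}$, and this is checked on the canonical generators: each $p_A$ is carried into $C_0(\partial E)=C_0\big(\Gamma(\partial E,\sm_E)^{(0)}\big)$, on which both $\tau_z$ and $\gamma^{\partial E}_z$ act trivially, whereas each $s_{\af,B}$ encodes prepending an edge of $\partial E$ to a boundary path, so that $\kappa_E(s_{\af,B})$ is supported on $c_{\partial E}^{-1}(1)$ and $\gamma^{\partial E}_z$ multiplies it by $z$, precisely as $\tau_z$ multiplies $s_{\af,B}$ by $z$. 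As these elements generate, we obtain $\kappa_E\circ\tau_z=\gamma^{\partial E}_z\circ\kappa_E$ for all $z\in\T$, and analogously $\kappa_F\circ\tau'_z=\gamma^{\partial F}_z\circ\kappa_F$.

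With $\kappa_E$ and $\kappa_F$ available, (7)$\iff$(8) becomes purely formal: given $\Phi'$ as in (7), the composite $\pi:=\kappa_F^{-1}\circ\Phi'\circ\kappa_E$ carries $D(\CB,\CL,\theta,\CI_\af^{(\CB,\CL,\theta)})$ onto $D(\CB',\CF,\theta',\CI_a^{(\CB',\CF,\theta')})$ and satisfies
\[
\pi\circ\tau_z=\kappa_F^{-1}\circ\Phi'\circ\gamma^{\partial E}_z\circ\kappa_E=\kappa_F^{-1}\circ\gamma^{\partial F}_z\circ\Phi'\circ\kappa_E=\tau'_z\circ\pi
\]
for all $z\in\T$, which is (8); conversely $\kappa_F\circ\pi\circ\kappa_E^{-1}$ does the job for (7). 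I expect the main obstacle to be exactly the gauge-equivariance of $\kappa_E$ asserted above: one has to recall the concrete description of the isomorphism coming from \cite{CasK1,Kang} and check that it is compatible with the two $\Z$-gradings involved --- the spectral decomposition of $C^*(\CB,\CL,\theta,\CI_\af^{(\CB,\CL,\theta)})$ under $\tau$, and the grading of $C^*(\Gamma(\partial E,\sm_E))$ induced by the cocycle $c_{\partial E}$. Once this grading bookkeeping is settled, everything else is a short diagram chase.
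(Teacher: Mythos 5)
Your proposal is correct and is essentially the proof the paper intends: the paper in fact omits the argument for this theorem, but the evident route is exactly yours --- items (1)--(7) follow by applying Corollary~\ref{equivalences ec} to the orthogonal semi-saturated partial systems of \cite[Proposition~3.6]{CasK1}, and (8) is spliced in by upgrading the diagonal-preserving isomorphism of \cite[Corollary~7.11]{CasK1} and \cite[Theorem~3.5]{Kang} used in Theorem~\ref{thm1} to a gauge-equivariant one, checked on the generators $p_A$ and $s_{\af,B}$ exactly as you do.
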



\begin{thebibliography}{10}

\bibitem{BCGW2023}
G. Boava, G. G. de Casto, D. Gonçalves and D. W. van Wyk,
{\em $C^*$-Algebras of one-sided subshifts over arbitrary alphabets},
arXiv:2312.17644 [math.OA] (2023). 

\bibitem{BT1998}
M. Boyle and J. Tomiyama, 
{\em Bounded topological orbit equivalence and $C^*$-algebras},
 J. Math. Soc. Japan \textbf{50}(1998), 317--329.

\bibitem{CRST} T. M. Carlsen, E. Ruiz, A. Sims and M. Tomforde, {\em Reconstruction of groupoids and $C^*$-rigidity of dynamical systems}, Adv. Math. \textbf{390}(2021), 107923.

\bibitem{CaK2} 
T. M. Carlsen and E. J. Kang, 
{\em Gauge-invariant ideals of $C^*$-algebras of  Boolean dynamical systems}, 
J. Math. Anal. Appl. \textbf{488}(2020), 124037.

\bibitem{CarlsenLarsen2016}
T. M. Carlsen and N. S. Larsen,
{\em Partial actions and KMS states on relative graph $C^*$-algebras},
J. Funct. Anal. \textbf{271}(2016), 2090--2132. 

\bibitem{CasK1}
 G. G. de Castro and E. J. Kang,  
{\em  Boundary path groupoids of generalized Boolean dynamical systems and their $C^*$-algebras}, 
J. Math. Anal. Appl. \textbf{518}(2023), 126662.

\bibitem{CasK2}
 G. G. de Castro and E. J. Kang, 
{\em $C^*$-algebras of generalized Boolean dynamical systems as partial crossed products}, J. Algebr. Comb. \textbf{58}(2023), 355--385. 

\bibitem{CastroWyk}
 G. G. de Castro and D. W. van Wyk,
{\em Labelled space $C^*$-algebras as partial crossed products and a simplicity characterization},
J. Math. Anal. Appl. \textbf{491}(2020), 124290.
\bibitem{Dok1}
M. Dokuchaev,
{\em Recent developments around partial actions}, 
São Paulo J. Math. Sci. \textbf{13}(2019), 195--247. 

\bibitem{ExelBook}
R. Exel,
{\em Partial dynamical systems, Fell bundles and applications},
  Math. Surv. Monogr. \textbf{224}(2017), American Mathematical Society, Providence, RI.

\bibitem{ExelLaca1999}
R. Exel and M.~Laca,
{\em Cuntz-Krieger algebras for infinite matrices},
J. Reine Angew. Math. \textbf{512}(1999), 119--172. 

\bibitem{ExelLaca2003}
R. Exel and M.~Laca,
{\em Partial dynamical systems and the KMS condition},
  Comm. Math. Phys. \textbf{232}(2003), 223--277.

\bibitem{GPS}
T. Giordano, I.F. Putnam and C.F. Skau, 
{\em Topological orbit equivalence and $C^*$-crossed products},
 J. Reine Angew. Math. \textbf{469}(1995), 51--111.

\bibitem{Kang} E. J. Kang, {\em A generalized uniqueness theorem for generalized Boolean dynamical systems}, J. Math. Anal. Appl. \textbf{538}(2024), 128381.

\bibitem{Li2018} X. Li, {\em Continuous orbit equivalence rigidity}, Ergod. Th. \& Dynam. Sys. \textbf{38}(2018), 1543--1563.

\bibitem{Li2017} X. Li, {\em Partial transformation groupoids attached to graphs and semigroups}, Int. Math. Res. Not. \textbf{17}(2017), 5233--5259.

\bibitem{T1996}
 J. Tomiyama, 
{\em Topological full groups and structure of normalizer in transformation group $C^*$-algebras},
 Pacific J. Math. \textbf{173}(1996), 571--583.

\end{thebibliography}
\end{document}